\newtheorem{theorem}{Theorem}[section]
\newtheorem{lemma}[theorem]{Lemma}
\theoremstyle{definition}
\newtheorem{corollary}[theorem]{Corollary}
\theoremstyle{remark}
\numberwithin{equation}{section}
\newcommand{\Z}{\mathbb Z}
\newcommand{\F}{\mathbb F}
\newcommand{\C}{\mathbf C}
\newcommand{\Q}{\mathbb  Q}
\newcommand{\Fq}{\F_q}
\newcommand{\ka}{{\kappa}}
\newcommand{\cO}{{\mathcal O}}
\newcommand{\cl}{\mathcal{C}l}
\newcommand{\p}{\mathbf{Pic}}
\newcommand{\N}{\mathcal N}
\newcommand{\m}{\mathfrak m}
\newcommand{\f}{\mathfrak f}
\newcommand{\e}{\mathfrak e}
\newcommand{\g}{\mathfrak g}
\newcommand{\h}{\mathfrak h}
\newcommand{\q}{\mathfrak q}
\newcommand{\pe}{\mathfrak p}
\newcommand{\ma}{\mathfrak a}
\title{Computing Class Groups of Function Fields Using Stark Units}
\author{Ming-Deh Huang and Anand Kumar Narayanan}
\begin{document}
\maketitle
\begin{abstract}
Let $k$ be a fixed finite geometric extension of the rational function field $\mathbb{F}_q(t)$. Let $F/k$ be a finite abelian extension such that there is an $\Fq$-rational place $\infty$ in $k$ which splits in $F/k$ and let $\mathcal{O}_F$ denote the integral closure in $F$ of the ring of functions in $k$ that are regular outside $\infty$.
We describe algorithms for computing the divisor class number and in certain cases for computing the structure of the divisor class group and discrete logarithms between Galois conjugate divisors in the divisor class group of $F$. The algorithms are efficient when $F$ is a narrow ray class field or a small index subextension of a narrow ray class field.\\ \\
We prove that for all prime $\ell$ not dividing $q(q-1)[F:k]$, the structure of the $\ell$-part of the ideal class group $\p(\cO_F)$ of $\mathcal{O}_F$ is determined by Kolyvagin derivative classes that are constructed out of Euler systems associated with Stark units. This leads to an algorithm to compute the structure of the $\ell$ primary part of the divisor class group of a narrow ray class field for all primes $\ell$ not dividing $q(q-1)[F:k]$.
\end{abstract}
\section{Introduction}
\noindent Fix $k/\F_{q}(t)$, a finite geometric extension of the rational function field $\mathbb{F}_q(t)$. Let $F/k$ be a finite abelian extension of conductor $\m$ such that $F$ has an unramified $\Fq$ rational place $\mathfrak{B}$. Let $\infty$ be the place in $k$ lying below $\mathfrak{B}$.  Since $\infty$ splits completely in $F/k$, we call $F$ totally real with respect to $\infty$. Denote by $\cO_k$ the ring of functions in $k$ regular outside $\infty$.
Let $G:=Gal(F/k)$ denote the Galois group of the extension, $\deg(\f)$ the degree of an ideal $\f \subset \cO_k$ as the degree of the divisor $\f$ and $\cO_F$ the integral closure of $\cO_k$ in $F$. Let $H_\m$ denote the narrow ray class field of modulus $\m \subset \cO_K$.\\ \\
Let $\mathcal{D}_F$ denote the group of divisors of $F$, which is the free abelian group on the places of $F$. Denote by $\mathcal{D}^0_F$ the subgroup of $\mathcal{D}_F$ of degree zero divisors and by $\mathcal{P}_F$ the subgroup of principal divisors which consists of divisors of functions in $F$. The quotient $\mathcal{C}l^0_F = \mathcal{D}^0_F/\mathcal{P}_F$ is the (degree zero) divisor class group of $F$.\\ \\
The divisor class group $\mathcal{C}l^0_F$ is a finite abelian group and fits in the following exact sequence \cite{ros}[Prop 14.1]  $$ 0 \longrightarrow R_F \longrightarrow  \mathcal{C}l^0_F \longrightarrow \p(\mathcal{O}_F) \longrightarrow 0$$ where $R_F$ is the regulator and $\p(\mathcal{O}_F)$ is the ideal class group of $\mathcal{O}_F$. The regulator $R_F$ is the quotient of the group of degree zero divisors supported at the places in $F$ above $\infty$ by the group of principal divisors supported at the places in $F$ above $\infty$. Let $h(F)$ denote $|\mathcal{C}l^0_F|$ and let $h(\cO_F)$ denote $|\p(\mathcal{O}_F)|$.\\ \\
Stark units are certain functions in $F$ related to special values of Artin's L-functions and appear in the context of the Brumer-Stark and Stark conjectures (see \S \ref{notation} for the definition). They are supported at the places dividing the conductor $\m$ and the places dividing $\infty$.
Analytic class number formula relates the divisor class number $h(F)$,  which is the order of $\mathcal{C}l^0_F$, to certain special values of the Artin L-functions associated with the non-trivial irreducible characters of $G$. These special values are related to Stark units and in section \ref{divisor_class_number} we describe this correspondence and develop an algorithm to compute the divisor class number given a certain $\Z[G]$ generator of the Stark units. The following theorem is proven in section \ref{divisor_class_number}.
\begin{theorem}\label{ray_class_theorem}
There is a deterministic algorithm that given the rational function field $\F_q(t)$, a totally real finite abelian extension $F/\F_q(t)$ presented as an irreducible polynomial $X_F(y) \in \F_q(t)[y]$ such that $F = \F_q(t)[y]/(X_F(y))$ and a generator for the conductor $\m$ of $F$, computes the degree zero divisor class number $h(F)$ in time polynomial in $q^{\deg(\m)}$ and the size of $X_F$.
\end{theorem}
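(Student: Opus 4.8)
The plan is to run the analytic class number formula through the factorization of the zeta function of $F$ over the characters of $G=\mathrm{Gal}(F/\Fq(t))$, reducing the computation of $h(F)$ to that of $|G|$ one-variable $L$-polynomials, each of degree $O(\deg\m)$. Write $\zeta_F(s)=P_F(q^{-s})/\bigl((1-q^{-s})(1-q^{1-s})\bigr)$ with $P_F\in\Z[T]$, so $h(F)=P_F(1)$. Since $F/\Fq(t)$ is abelian, $\zeta_F(s)=\prod_{\chi}L(s,\chi)$ over the characters of $G$; as $\Fq(t)$ has genus $0$ the trivial character contributes $L(s,\chi_0)=1/\bigl((1-q^{-s})(1-q^{1-s})\bigr)$, whence $P_F(T)=\prod_{\chi\neq\chi_0}P_\chi(T)$ where $L(s,\chi)=P_\chi(q^{-s})$ and $P_\chi$ is a polynomial over the cyclotomic field $\Q(\zeta_e)$, $e$ the exponent of $G$. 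Thus
\[
h(F)=\prod_{\chi\neq\chi_0}P_\chi(1),
\]
and since $\infty$ splits completely in $F$ no $\chi$ is ramified at $\infty$, so the conductor of $\chi$ divides $\m$ and $\deg P_\chi<\deg(\m)$. This degree bound is the one genuinely arithmetic ingredient, and is what makes the computation both finite and cheap.

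First I would read off $G$ together with the Artin map $\ma\mapsto\mathrm{Art}(\ma)\in G$ on monic polynomials $\ma\in\Fq[t]$ coprime to $\m$: the ray class group $\mathrm{Cl}_\m$ of $\Fq[t]$ is presented explicitly by cyclotomic function field theory in time polynomial in $q^{\deg(\m)}$, and the surjection $\mathrm{Cl}_\m\twoheadrightarrow G$ is pinned down by $X_F$ --- its kernel is generated by classes of primes splitting completely in $F$, detected by factoring $X_F$ modulo small primes. Next, for $n=0,1,\dots,\deg(\m)$ I would form the group-ring element $A_n:=\sum_{\deg\ma=n,\ (\ma,\m)=1}\mathrm{Art}(\ma)\in\Z[G]$ by enumerating the at most $q^n\le q^{\deg(\m)}$ monic polynomials of degree $n$. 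Then $\chi(A_n)=a_n(\chi):=\sum_{\deg\ma=n}\chi(\ma)$ for every $\chi$, and since $P_\chi(T)=\sum_{n\ge 0}a_n(\chi)T^n$ is a polynomial of degree $<\deg(\m)$, evaluating all characters on $A_0,\dots,A_{\deg\m}$ --- by one Fourier transform over $G$, or orbit by orbit with exact arithmetic in $\Q(\zeta_e)$, $[\Q(\zeta_e):\Q]\le|G|\le q^{\deg(\m)}$ --- recovers every $P_\chi$, in particular every $P_\chi(1)=\chi\bigl(\sum_n A_n\bigr)$ for $\chi\neq\chi_0$.

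To connect with \S\ref{divisor_class_number}, the same special values can instead be extracted from a $\Z[G]$-generator $\varepsilon$ of the group of Stark units of $F$. Stark's rank-one conjecture being a theorem over function fields, the $\infty$-valuations of the Galois conjugates $\varepsilon^{\sigma}$ ($\sigma\in G$) assemble into a $\Z[G]$-matrix whose $\chi$-eigenvalue equals $P_\chi(1)$ up to an explicit factor (a root of unity times a power of $q-1$), so that $\prod_{\chi\neq\chi_0}$ of these eigenvalues returns $h(F)$ up to that computable factor. Producing such an $\varepsilon$ --- a function in $F$ whose divisor is supported on the places above $\m$ and above $\infty$ with prescribed multiplicities there --- is a Riemann--Roch/linear-algebra computation over $\Fq$ on data of size polynomial in $q^{\deg(\m)}$ (via Carlitz $\m$-torsion when $k=\Fq(t)$), again followed by a product of at most $|G|\le q^{\deg(\m)}$ values; this is the route taken in \S\ref{divisor_class_number}.

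Finally I would assemble $h(F)=\prod_{\chi\neq\chi_0}P_\chi(1)$ exactly: partitioning the nontrivial characters into $\mathrm{Gal}(\Q(\zeta_e)/\Q)$-orbits, each orbit $O$ contributes $\prod_{\chi\in O}P_\chi(1)=N_{\Q(\zeta_e)/\Q}\bigl(P_{\chi_O}(1)\bigr)\in\Z$, so the product is evaluated as a positive integer with no numerical rounding. Every stage --- presenting $\mathrm{Cl}_\m$ and the Artin map, forming the $A_n$, evaluating characters, taking norms --- runs in time polynomial in $q^{\deg(\m)}$ and in the size of $X_F$, which is the asserted bound. The part deserving the most care is not any single arithmetic input --- the factorization of $\zeta_F$, the polynomiality and degrees of the $L(s,\chi)$, and the Stark-unit description are all classical --- but the bookkeeping at scale: carrying the character data through cyclotomic arithmetic of degree up to $|G|$ and keeping the $O(\deg\m)$-term truncation of each Dirichlet series honest, so that the output is the exact integer $h(F)$.
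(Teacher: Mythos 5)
Your route is correct in outline but genuinely different from the paper's. Both start from the analytic class number formula $h(F)=h(k)\prod_{\chi\neq 1}L(0,\chi)$; the difference is how the special values are obtained. You compute each $L$-polynomial directly, by enumerating monic polynomials of $\F_q[t]$ of bounded degree coprime to $\m$, packaging their Artin symbols into group-ring elements $A_n\in\Z[G]$, and reading off $\chi(\sum_n A_n)$ for each nontrivial $\chi$. The paper instead extracts all special values at once from a single arithmetic object: it computes the Stark unit $\lambda_{\m,F}$ via Carlitz/Drinfeld $\m$-torsion (Lemmas~\ref{lambda_lemma} and \ref{lambdaF_lemma}), applies the logarithm map $\log_F$ to get the Stickelberger element $\Theta_F=\tfrac{1}{q-1}\log_F(\lambda_{\m,F})$, and then uses $L_{F/k}(0,\bar\vartheta)=\tfrac{1}{q-1}\vartheta(\Theta_F)$ together with an explicit Euler-factor correction from $L_{F/k}$ to the primitive $L_k$. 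Your approach is more elementary and self-contained (no Drinfeld modules needed) and would prove the theorem on its own. The paper's route is chosen because the Stark unit $\lambda_{\m,F}$, the logarithm map, and $\Theta_F$ are the machinery reused throughout the rest of the paper (Stickelberger ideal, Euler systems), so Theorem~\ref{ray_class_theorem} is made to fall out of Lemma~\ref{lambdaF_lemma} as a by-product.

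One slip in your version, which the paper handles explicitly and which you should too: the sums $A_n=\sum_{\deg\ma=n,\,(\ma,\m)=1}\mathrm{Art}(\ma)$ give the coefficients of the $\m$-imprimitive $L$-function $L_{F/k}(s,\chi)$, not the primitive $L(s,\chi)$ appearing in the factorization of $\zeta_F$; they differ by the Euler factors at primes $\q\mid\m$ with $\q\nmid\f_\chi$. Your degree bound $\deg P_\chi<\deg\m$ holds for the primitive polynomial, but the imprimitive one can have degree up to roughly $2\deg\m$, so the claimed identity $P_\chi(1)=\chi(\sum_{n\le\deg\m}A_n)$ is off. Fix it either by enumerating to degree $2\deg\m$ and dividing out $\prod_{\q\mid\m,\ \q\nmid\f_\chi}(1-\chi(\q)T^{\deg\q})$, or by determining each $\f_\chi$ and summing only over ideals coprime to $\f_\chi$. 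This is precisely the correction the paper applies as $L_k(0,\vartheta)=L_{F/k}(0,\vartheta)\prod_{\mathfrak{b}\in P_F\setminus P_F(\vartheta)}(1-\vartheta(F/F_\vartheta,\mathfrak{b}))^{-1}$. Either remedy keeps the running time polynomial in $q^{\deg\m}$, so the theorem still follows.
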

\noindent Theorems 1.1,1.2 and 1.3 assume that a generator of the conductor of $F$ is given. In \S~\ref{conductor}, we sketch how to compute the conductor of $F$ given an irreducible polynomial $X_F(y) \in \F_q(t)[y]$ such that $F = \F_q(t)[y]/(X_F(y))$. \\ \\
All stated algorithmic results that take $\F_q(t)$ as an input assume that an efficient representation of the finite field $\F_q$ if given. An efficient representation is one that allows field addition and multiplication in time polylogarithmic in the field size (see \cite{len} for a formal definition).\\ \\
The algorithm in theorem \ref{ray_class_theorem} is efficient when $F$ is of small index in $H_\m$. For instance, when $k =\F_q(t)$ and $F$ is $H_\m$,  the running time is polylogarithmic in $h(H_\m)$. This is because the genus $g(H_\m)$ of $H_\m$ grows roughly as $|(\cO_k/\m)^\times| \log(|(\cO_k/\m)^\times|)$ (see \cite{hay_exp} and \cite{sal}[Thm 12.7.2] for an exact expression), $|(\cO_k/\m)^\times|$ is about $q^{\deg \m}$ and the divisor class number $h(H_\m)$ is approximately $q^{g(H_\m)}$.\\ \\
Due to Lauder and Wan \cite{lw}[Theorem 37], there is an algorithm to compute the divisor class number of an arbitrary finite extension of $\F_q(t)$ in time polylogarithmic in the divisor class number if the characteristic of $k$ is fixed.
It would be interesting to compare the performance the algorithm of Lauder and Wan when restricted to the family of narrow ray class fields with the algorithm in theorem \ref{ray_class_theorem}.\\ \\ 
In \cite{yin}, Yin defined an ideal $I_F$ in $\Z[G]$ that annihilates $\mathcal{C}l^0_F$. The ideal is comprised of Stickelberger elements that arise in the proofs by Deligne \cite{del} and Hayes \cite{hay} of the function field analogue of the Brumer-Stark conjecture, and are intimately related to Stark units.   When $F$ is either $K_\m$, the cyclotomic extension of conductor $\m$ or $H_\m \subset K_\m$, the narrow ray class field of modulus $\m$, Yin \cite{yin} derived an index theorem demonstrating that $[\Z[G]:I_{F}]$ is up to a power of $q-1$, the degree zero divisor class number of $F$.  Ahn,  Bae and Jung \cite{ahn} extended the index theorem to all sub extensions of $K_\m$. Since a totally real extension of conductor $\m$ is contained in $H_\m$, the index theorem applies to $F$ that we consider.\\ \\
The ideal $I_F$ and the corresponding index theorem are analogues of the Stickelberger ideal in cyclotomic extensions of $\Q$ and the Iwasawa-Sinnott \cite{sin} index formula. It is remarkable that the index of $I_F$ relates to the divisor class number in its entirety. In contrast, in cyclotomic extensions over $\Q$, the index of the Stickelberger ideal relates only to the relative part of the class number. As Yin \cite[\S~1]{yin} suggests, it is perhaps appropriate to regard $[\Z[G]:I_F]$ as being composed of both the relative part which is analogous to the Iwasawa-Sinnott index and the real part which corresponds to the Kummer-Sinnott \cite{sin} unit index formula. The construction of a large ideal such as $I_F$ that annihilates the divisor class group is possible in part due to the partial zeta functions over function fields being $\Q$ valued when evaluated at $0$. In contrast, in cyclotomic extensions over $\Q$, the evaluation of partial zeta functions of the real part of cyclotomic extensions at $0$ could be irrational and the Stickelberger ideal corresponding to the real part is the zero ideal.\\ \\
Based on the construction in \cite{yin,ahn}, the following theorem is proven in \S~\ref{stickelberger}.
\begin{theorem}\label{stickelberger_generating_set}
There is a deterministic algorithm that given the rational function field $\F_q(t)$, a totally real finite abelian extension $F/\F_q(t)$ presented as an irreducible polynomial $X_F(y) \in \F_q(t)[y]$ such that $F = \F_q(t)[y]/(X_F(y))$ and a generator for the conductor $\m$ of $F$, computes a generating set of $I_F$ in time polynomial in $q^{\deg(\m)}$ and the size of $X_F$.
\end{theorem}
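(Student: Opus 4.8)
The plan is to turn the definition of $I_F$ from \cite{yin,ahn} into an algorithm. Since $F/\F_q(t)$ is abelian with conductor $\m$ we may take $k=\F_q(t)$ and $\cO_k=\F_q[t]$, and write $\m=(f)$ for the given monic $f\in\F_q[t]$; the divisors $\f\mid\m$ are the monic divisors of $f$, enumerable by trial division over the at most $q^{\deg f}$ monic polynomials of degree $<\deg f$. Recall that $I_F$ is the $\Z[G]$-submodule of $\Z[G]$ generated by a finite family built from the Stickelberger elements $\theta_{F,\f}=\sum_{\sigma\in G}\zeta_{F,\f}(0,\sigma)\,\sigma^{-1}\in\frac1{q-1}\Z[G]$, where $\zeta_{F,\f}(s,\sigma)$ is the partial zeta function of the ray class of $\sigma$ with the Euler factors at the primes dividing $\f$ removed. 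I will (i) make $G=Gal(F/k)$ and the Artin map on ideals of $\cO_k$ explicit, (ii) compute each $\theta_{F,\f}$, and (iii) extract a small generating set of $I_F$ by integer linear algebra.

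For (i), recall that $F$ lies inside the Carlitz cyclotomic field $K_\m=k(\Lambda_f)$, with $Gal(K_\m/k)\cong(\cO_k/\m)^\times$ canonically, and that $F=K_\m^N$ for the subgroup $N:=Gal(K_\m/F)$ of $(\cO_k/\m)^\times$, which contains $\F_q^\times$ since $\infty$ splits in $F$. One recovers $N$ \emph{without} factoring $X_F$. Fix $D=O(\deg\m+\log_q\mathrm{size}(X_F))$ and enumerate the monic irreducibles $\pe=(p)$ of $\F_q[t]$ of degree at most $D$ that divide neither $f$ nor $\mathrm{disc}(X_F)$; such a $\pe$ splits completely in $F$ exactly when $y^{|\cO_k/\pe|}\equiv y\pmod{(X_F,\pe)}$, which is decided deterministically by repeated squaring at cost $\mathrm{poly}(\deg X_F,\deg p,\log q)$. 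By the effective Chebotarev density theorem over function fields — a consequence of the Riemann hypothesis for the curves attached to $K_\m$, whose genus is polynomially bounded in $q^{\deg\m}$ — and because only polynomially many $\pe$ divide $\mathrm{disc}(X_F)$, for this $D$ every class of $(\cO_k/\m)^\times$ contains such a $\pe$; hence $N$ is exactly the subgroup generated by $\F_q^\times$ together with the classes $p\bmod f$ of those $\pe$ that split completely. Factoring $f$ by trial division (deterministic, $\mathrm{poly}(q^{\deg f})$) yields the structure of $(\cO_k/\m)^\times$, hence of $G=(\cO_k/\m)^\times/N$, its character group $\widehat G$, and a deterministic baby-step giant-step discrete logarithm in $(\cO_k/\m)^\times$.

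For (ii), each $\chi\in\widehat G$ is a Dirichlet character modulo $f$ trivial on $N$, and its $L$-function $L(u,\chi)=\sum_{b\text{ monic}}\chi(b)\,u^{\deg b}$ is the polynomial $\sum_{n<\deg\f_\chi}\bigl(\sum_{b\text{ monic},\,\deg b=n}\chi(b)\bigr)u^n$ of degree below $\deg f$; its coefficients are computed by summing $\chi$ — evaluated through the precomputed discrete logarithm — over the at most $q^{\deg f}$ monic $b$ of each degree. Multiplying by the finitely many Euler factors at the primes of the relevant set $S$ lying away from the conductor of $\chi$ and specialising at $u=1$ gives $L_S(0,\chi)$; then $\zeta_{F,\f}(0,\sigma)=\frac1{|G|}\sum_{\chi}\overline{\chi}(\sigma)\,L_S(0,\chi)$, whence $\theta_{F,\f}$. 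As $|G|<q^{\deg f}$ and there are at most $q^{\deg f}$ divisors $\f\mid\m$, all the $\theta_{F,\f}$ are produced in $\mathrm{poly}(q^{\deg\m},\mathrm{size}(X_F))$ time, their coefficients lying in $\frac1{q-1}\Z$ with polynomially bounded height.

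It remains (iii) to output a generating set. The generators of $I_F$ in \cite{yin,ahn} have the form $(\sigma_\ma-|\cO_k/\ma|)\,\theta_{F,\f}$ with $\f\mid\m$ and $\ma$ an integral ideal prime to $\m$, together with the images of such elements under the transition maps $\Z[Gal(K_\m/k)]\to\Z[G]$ in the subextension case; because $|\cO_k/\ma|\equiv1\pmod{q-1}$ each of these elements in fact lies in $\Z[G]$, and finitely many generate since it suffices to let $\ma$ range over primes whose classes generate the ray class group — by effective Chebotarev these can be chosen of degree $O(\deg\m)$, giving $\mathrm{poly}(q^{\deg\m})$ generators of polynomially bounded bit-size. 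We assemble these from the $\theta_{F,\f}$ and the Artin data and return them, optionally first reducing to a Hermite normal form basis of the lattice they span in $\Z[G]\cong\Z^{|G|}$, a polynomial-time step as $|G|<q^{\deg\m}$. The main obstacle is not any single step but faithfully matching the somewhat intricate definition of $I_F$ in \cite{yin,ahn} to data extractable from $X_F$: pinning down the Artin map, i.e.\ the embedding $F\hookrightarrow K_\m$, which rests squarely on the effective Chebotarev bound, and, when $F\subsetneq H_\m$, tracking the Ahn--Bae--Jung descent carefully enough to guarantee that the finitely many computed elements generate $I_F$ and not merely a proper submodule; a subsidiary point is to propagate explicit height bounds through the character sums and the lattice reduction so that the whole procedure stays polynomial.
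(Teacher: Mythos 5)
Your route is genuinely different from the paper's, and the difference is worth noting. You propose to compute the Stickelberger elements directly by character sums — enumerating monic polynomials degree by degree, evaluating characters via a precomputed discrete logarithm in $(\cO_k/\m)^\times$, assembling the $L$-function polynomials, and applying Fourier inversion over $\hat G$ to get the partial zeta values. The paper goes the opposite way: it first computes the Stark units $\lambda_{\f,F}$ (Lemma~\ref{lambdaF_lemma}, which builds the torsion generators $\lambda_\f$ of the rank-one Drinfeld module and norms them down to $F_\f$), and then recovers $\Theta_{H_\f} = \frac{1}{q-1}\log_{H_\f}(-\lambda_\f^{q-1})$ via the logarithm map $\log_L(h)=\sum_{\sigma}\mathcal{V}_{\infty(L)}(h^\sigma)\sigma^{-1}$. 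Both produce the same Stickelberger data; your character-sum approach avoids Drinfeld-module machinery but severs the connection to Stark units, which the paper exploits because Lemma~\ref{lambdaF_lemma} is needed anyway for Theorem~\ref{ray_class_theorem} and the later Euler-system material. Your recovery of $G$ and the Artin data via Chebotarev and splitting behavior is likewise an alternative to the paper's method of factoring $X_F$ over $H_\m$ to exhibit the embedding $F\hookrightarrow H_\m$.

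The gap you flag at the end, however, is the crux and is not cosmetic. Following \cite{yin,ahn} (and as the paper states), $I_F$ is defined as $Q_F\cap\Z[G]$, where $Q_F\subset\Q[G]$ is the $\Z[G]$-module generated by the ramified part $\{Cor_{F/F_\f^+}Res_{H_\f/F_\f^+}(\Theta_{H_\f})\}_{\f\mid\m}$, the unramified part $\{Cor_{F/F_\e}Res_{H_\g/F_\e}(\Theta_{H_\g})\}_\g$ with $\g$ ranging over \emph{all} prime ideals of $\cO_k$ (an a priori infinite family), and $\frac{1}{q-1}\sum_{\sigma\in G}\sigma$. Your proposed generators of the shape $(\sigma_\ma-|\cO_k/\ma|)\theta_{F,\f}$ certainly lie in $\Z[G]$ and in $I_F$, but it is not established that they span the full intersection $Q_F\cap\Z[G]$ rather than a proper sublattice; the final HNF step only canonicalizes whatever lattice you hand it. The paper sidesteps this by invoking the Ahn--Bae--Jung structural identity $Q_F=I_F+\Z\Theta$ with $\Theta=\frac{1}{q-1}\sum_\sigma\sigma$: given any $\Z[G]$-generating set $M_F$ of $Q_F$, each $s\in M_F$ admits a unique $0\le z_s<q-1$ with $s-z_s\Theta\in I_F$, and $\{s-z_s\Theta\}_{s\in M_F}\cup\{(q-1)\Theta\}$ generates $I_F$. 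The paper also reduces the infinite $\g$-family to a finite set $R$ not by Chebotarev alone but via Hayes' decomposition $P=\bar P\times M$ of the group of $\xi$-invariants, showing that $\log_{F_\g}(\lambda_{\g,F})$ for arbitrary $\g$ is an integer combination of $\log_{F_\ma}(\lambda_{\ma,F})$ for $\ma\in R$ plus a multiple of $\sum_\sigma\sigma$; your write-up does not engage with this reduction. To close the gap you would either need to establish that your chosen family spans $I_F$ exactly, or import the identity $Q_F=I_F+\Z\Theta$ as the paper does, in which case the character-sum computation plugs in cleanly.
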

\noindent Let $r_F$ be the largest factor of $h(F)$ that is relatively prime to $h(k)[F:k]$. If the $r_F$-torsion $\cl^0_F[r_F]$ of $\cl^0_F$ is $\Z[G]$ cyclic, then the structure of $\cl^0_F[r_F]$ is determined by the Stickelberger ideal $I_F$. This leads to an algorithm to compute the structure of $\cl^0_F$ resulting in the following theorem proven in \S~\ref{stickelberger_structure}.
\begin{theorem} \label{divisor_class_structure} There is a deterministic algorithm that given the rational function field $\F_q(t)$, a totally real finite abelian extension $F/\F_q(t)$ presented as an irreducible polynomial $X_F(y) \in \F_q(t)[y]$ such that $F = \F_q(t)[y]/(X_F(y))$ and a generator for the conductor $\m$ of $F$, if $\cl^0_F[r_F]$ is $\Z[G]$ cyclic, computes the structure of $\mathcal{C}l^0_F$ in time polynomial in $q^{\deg(\m)}$ and the size of $X_F$. If in addition a $\Z[G]$ generator of $\cl^0_F[r_F]$ is given, the invariant factor decomposition of $\cl^0_F$ can be computed in time polynomial in $q^{\deg(\m)}$ and the size of $X_F$.
\end{theorem}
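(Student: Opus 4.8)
The plan is to combine the index theorem of Yin and Ahn--Bae--Jung with the algorithms of Theorems~\ref{ray_class_theorem} and \ref{stickelberger_generating_set}, and then extract structural information about the prime-to-$h(k)[F:k]$ part of $\cl^0_F$ from the Stickelberger ideal $I_F$ via a $\Z[G]$-module argument. First I would run the algorithm of Theorem~\ref{ray_class_theorem} to obtain $h(F)=|\cl^0_F|$; factoring out the part of $h(F)$ supported on primes dividing $h(k)[F:k]$ (these can be read off from $X_F$ and $\m$) yields the integer $r_F$. Next I would invoke Theorem~\ref{stickelberger_generating_set} to compute a generating set of $I_F\subset\Z[G]$. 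The key algebraic input is that, by the index theorem of \cite{yin,ahn}, $[\Z[G]:I_F]$ equals $h(F)$ up to a power of $q-1$; since $\gcd(r_F,q-1)=1$ (as $q-1$ divides $[F:k]\cdot(q-1)$, hence is excluded from $r_F$), the localization $\Z[G]_{(r_F)}$ satisfies $[\Z[G]_{(r_F)}:(I_F)_{(r_F)}]=r_F$, where the subscript denotes the semilocal ring obtained by inverting all primes not dividing $r_F$.

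The crux is then the following structural claim: when $\cl^0_F[r_F]$ is $\Z[G]$-cyclic, the $\Z[G]_{(r_F)}$-module $\Z[G]_{(r_F)}/(I_F)_{(r_F)}$ is isomorphic to $\cl^0_F\otimes\Z_{(r_F)}$. One direction is that $I_F$ annihilates $\cl^0_F$ (Yin), so there is a surjection $\Z[G]/I_F\twoheadrightarrow\Z[G]/\mathrm{Ann}(\theta)\xrightarrow{\sim}\Z[G]\theta$ for a $\Z[G]$-generator $\theta$ of the cyclic module $\cl^0_F[r_F]$; localizing and comparing orders, which match by the index theorem, forces this to be an isomorphism after tensoring with $\Z_{(r_F)}$. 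This identification is exactly what lets me compute the structure: the invariant factors of $\cl^0_F\otimes\Z_{(r_F)}$ as an abelian group are the invariant factors of the finite ring $\Z[G]_{(r_F)}/(I_F)_{(r_F)}$, and these are extracted by computing a Smith normal form of the matrix whose rows are the generators of $I_F$ (reduced modulo $r_F$, written in the $\Z[G]$-basis indexed by $G$), an $O(|G|)\times O(|G|)$ integer-matrix computation polynomial in $q^{\deg\m}$ and $|X_F|$ since $|G|=[F:k]\le[F:\F_q(t)]$. To assemble the full invariant-factor decomposition of $\cl^0_F$ I would combine this $r_F$-part with the complementary part: $h(F)/r_F$ is a known integer supported on the (boundedly many) primes dividing $h(k)[F:k]$, and a final bookkeeping step merges the two via the Chinese Remainder Theorem at the level of invariant factors. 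When a $\Z[G]$-generator of $\cl^0_F[r_F]$ is supplied as part of the input, the hypothesis that $\cl^0_F[r_F]$ be $\Z[G]$-cyclic is verified implicitly rather than needing a separate certification, which is why the second sentence of the theorem drops to the weaker conclusion of computing the invariant factor decomposition of $\cl^0_F$.

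I expect the main obstacle to be the structural claim that $\Z[G]$-cyclicity of $\cl^0_F[r_F]$ promotes, via the index theorem, to the isomorphism $\Z[G]_{(r_F)}/(I_F)_{(r_F)}\cong\cl^0_F\otimes\Z_{(r_F)}$: one must be careful that over the non-local ring $\Z[G]_{(r_F)}$ a surjection of finite modules of equal cardinality is an isomorphism (true, by finiteness), but also that $\cl^0_F\otimes\Z_{(r_F)}=\cl^0_F[r_F^\infty]$ is genuinely cyclic over $\Z[G]_{(r_F)}$ and not merely over each residue quotient, so that its annihilator is a principal-enough ideal to be compared with $(I_F)_{(r_F)}$ on the nose. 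Here the coprimality of $r_F$ to $[F:k]$ is essential, since it makes $\Z[G]\otimes\Z_{(r_F)}$ a product of Dedekind-like (indeed regular semilocal) rings over which finitely generated torsion modules have a clean invariant-factor theory, so the passage from the ring-theoretic Smith form to the group-theoretic invariant factors is legitimate. The remaining steps — factoring $h(F)$, running the two prior algorithms, reducing mod $r_F$, and computing a Smith normal form — are routine and manifestly within the stated complexity bound.
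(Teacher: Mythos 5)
Your treatment of the $r_F$-part matches the paper's argument: $I_F\subseteq J=\mathrm{Ann}(\gamma)$ gives a surjection $\Z[G]/I_F\twoheadrightarrow\cl^0_F[r_F]$, the coprime complementary torsion $(\Z[G]/I_F)[s_1]$ dies, and equality of cardinalities (from the Ahn--Bae--Jung index theorem, using coprimality of $r_F$ to $h(k)[F:k]$) upgrades the surjection on $r_F$-torsion to an isomorphism, after which a Smith normal form on a $\Z$-generating set of $I_F$ gives the structure of $(\Z[G]/I_F)[r_F]\cong\cl^0_F[r_F]$. That part of your proposal is correct, modulo a spurious side-claim that $\gcd(r_F,q-1)=1$ "because $q-1$ divides $[F:k](q-1)$" (the relevant coprimality is with $h(k)[F:k]$, which is what $r_F$ is defined against; $q-1$ plays no role here).

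The genuine gap is the complementary $s_2$-part, where $s_2=h(F)/r_F$ is supported on primes dividing $h(k)[F:k]$. You dismiss this as "bookkeeping," but knowing the order $s_2$ does not determine the isomorphism type of $\cl^0_F[s_2]$ (e.g.\ $s_2=4$ could be $\Z/4\Z$ or $(\Z/2\Z)^2$), and the Stickelberger index theorem gives no information here precisely because the index $[\Z[G]:I_F]$ and $h(F)$ may disagree at these primes. The paper handles this with an entirely separate algorithm: take a polynomial-size generating set of divisor classes for $\cl^0_F$ (Hess), multiply by $h(F)/|(\cl^0_F)_\ell|$ to land in the $\ell$-primary part for each $\ell\mid s_2$, locate an element of maximal order via Riemann--Roch principality tests, split off the cyclic subgroup it generates, and recurse on the complement. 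Without something like this your algorithm does not compute the structure of $\cl^0_F$, only of its $r_F$-torsion. You also misread the theorem's second sentence: supplying a $\Z[G]$-generator $\gamma$ does not replace certification of cyclicity but rather yields a strictly stronger output, namely an explicit invariant-factor decomposition $\cl^0_F=\bigoplus_i\langle e(i)\gamma\rangle$ together with a unimodular projection matrix, which is what enables the discrete-logarithm application discussed after the theorem.
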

\noindent Further, given a $\Z[G]$ generator $\gamma$ of $\cl^0_F[r_F]$, we can project an element in $\Z[G] \gamma$ into the invariant decomposition of $\cl^0_F$ and hence efficiently compute discrete logarithms between $\gamma$ and its Galois conjugates. See \S \ref{stickelberger_structure}\ for details.\\ \\
In section \ref{regulator}, we describe an algorithm to compute the structure of the $\ell$ primary part of the regulator $R_F$ for a prime $\ell$ not dividing $q(q-1)[F:k]$ and prove the below theorem and the corollary that follows.
\begin{theorem}\label{regulator_theorem} There is a deterministic algorithm that given the rational function field $\F_q(t)$, a totally real finite abelian extension $F/\F_q(t)$ presented as an irreducible polynomial $X_F(y) \in \F_q(t)[y]$ such that $F = \F_q(t)[y]/(X_F(y))$ and a prime $\ell \nmid q(q-1)[F:k]$, computes the structure of the $\ell$-primary part of the regulator $R_F$ in time polynomial in $\log(q)$ and the size of $X_F$.
\end{theorem}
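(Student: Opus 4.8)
The plan is to peel $R_F\otimes\Z_\ell$ apart into $\chi$-eigencomponents, one for each character $\chi$ of $G=\mathrm{Gal}(F/\F_q(t))$, and to compute each component from the leading special value of the Artin $L$-function of $\chi$, which can be extracted from $X_F$ in the allotted time. Write $S_\infty$ for the set of places of $F$ above $\infty$. Since $\infty$ is a rational place of $\F_q(t)$ that splits completely in $F$, every place of $S_\infty$ is $\F_q$-rational and $S_\infty$ is a simply transitive $G$-set; choosing the base place $\mathfrak B\in S_\infty$ to correspond to $1\in G$ identifies the divisor group $\mathcal D_{S_\infty}$ with $\Z[G]$, the degree-zero subgroup $\mathcal D^0_{S_\infty}$ with the augmentation ideal $I_G$, and the group of principal divisors supported on $S_\infty$ with the image of $\mathrm{div}\colon\mathcal O_F^\times/\F_q^\times\hookrightarrow I_G$. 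Thus $R_F\cong I_G/\mathrm{div}(\mathcal O_F^\times/\F_q^\times)$ as $\Z[G]$-modules. As $\ell\nmid|G|=[F:k]$, the ring $\Z_\ell[G]$ is a finite product $\prod_\chi\mathcal O_\chi$ indexed by the $\mathrm{Gal}(\overline{\Q}_\ell/\Q_\ell)$-orbits of characters of $G$, each $\mathcal O_\chi$ the valuation ring of an unramified extension of $\Q_\ell$ with uniformizer $\ell$; and by Dirichlet's $S$-unit theorem $\mathcal O_F^\times/\F_q^\times$ and $I_G$ become isomorphic $\Q[G]$-modules after tensoring with $\Q$. Hence for $\chi\neq 1$ both $(\mathcal O_F^\times/\F_q^\times)\otimes\mathcal O_\chi$ and $I_G\otimes\mathcal O_\chi$ are free of rank one over $\mathcal O_\chi$, so the $\chi$-component of $R_F\otimes\Z_\ell$ is cyclic, isomorphic to $\mathcal O_\chi/\ell^{v_\chi}$ for an integer $v_\chi\geq 0$, while the trivial component vanishes (reflecting $R_{\F_q(t)}=0$). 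It therefore suffices to compute the exponents $v_\chi$, after which $R_F\otimes\Z_\ell\cong\prod_{\chi\neq 1}(\Z/\ell^{v_\chi})^{[\Q_\ell(\chi):\Q_\ell]}$.

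Next I would pin down $v_\chi$ using Stark units. Let $S$ be the set of places of $F$ above $\infty$ together with those dividing the conductor $\m$, let $L_S(u,\chi)$ be the $S$-imprimitive Artin $L$-polynomial of $\chi$, and put $r_\chi=\mathrm{ord}_{u=1}L_S(u,\chi)$ (so $r_\chi=1$ whenever $\chi$ is ramified at every finite place of $S$). By the Stark conjecture over function fields (Hayes, Deligne), in the higher-rank case in the refined form of Rubin and Stark (Popescu, Burns), a $\Z[G]$ generator $\varepsilon$ of the module of Stark units attached to $F/\F_q(t)$ has the property that its $\chi$-projection generates the $\chi$-part of the $S$-unit module, and its divisor restricted to $S_\infty$ generates $\mathrm{div}(\mathcal O_F^\times/\F_q^\times)\otimes\mathcal O_\chi$ inside $I_G\otimes\mathcal O_\chi$, up to an index whose prime divisors all divide $q(q-1)[F:k]$ --- the factor $q(q-1)=q|\mu_F|$ accounting for the tame and root-of-unity contributions and $[F:k]$ for the index/Herbrand-quotient defects attached to $\Z[G]$. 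Under the hypothesis $\ell\nmid q(q-1)[F:k]$ this index is an $\ell$-adic unit, so $v_\chi$ equals the $\ell$-adic valuation --- computed inside $\mathcal O_\chi$ --- of the $\chi$-component of $\mathrm{div}(\varepsilon)|_{S_\infty}$, which the Stark formula identifies, with no transcendental factor intervening, as the valuation of the leading coefficient of $L_S(u,\chi)$ at $u=1$, that is, of $\bigl(L_S(u,\chi)/(1-u)^{r_\chi}\bigr)\big|_{u=1}$. This is internally consistent with the analytic class number formula, which gives $\sum_{\chi\neq 1}[\Q_\ell(\chi):\Q_\ell]\,v_\chi=\mathrm{ord}_\ell\bigl(h(F)/h(\mathcal O_F)\bigr)$, although the algorithm never computes $h(\mathcal O_F)$ directly.

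Finally, the algorithm computes the polynomials $L_S(u,\chi)$ for the $[F:k]$ characters $\chi$ of $G$. Each is (up to the finitely many removed Euler factors, which are written down from the factorization of $X_F$) the twisted zeta function of the abelian cover $F/\F_q(t)$; its degree is bounded polynomially in the size of $X_F$, its coefficients are $\Z$-linear combinations of Frobenius traces, and the ramification data needed to form $S$ is read off from the discriminant of $X_F$, so that no generator of the conductor $\m$ is required as input. These polynomials are computed in time polynomial in $\log q$ and the size of $X_F$ by the available zeta-function algorithms for curves (cf.\ Lauder--Wan). From $L_S(u,\chi)$ one reads off $r_\chi$ and the leading value, and $v_\chi=\mathrm{ord}_\ell$ of that value in $\mathcal O_\chi$ is then an inexpensive unramified $\ell$-adic computation; assembling the $v_\chi$ with the residue degrees $[\Q_\ell(\chi):\Q_\ell]$ yields the structure of $R_F\otimes\Z_\ell$. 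The principal obstacle is the middle step: establishing the exact equality $v_\chi=\mathrm{ord}_\ell(\text{leading value of }L_S(u,\chi)\text{ at }u=1)$, which requires disentangling the $S_\infty$-part of the Stark $S$-unit from its part at the ramified places and controlling the ``fudge factor'' in Stark's conjecture finely enough to see that it is an $\ell$-adic unit precisely when $\ell\nmid q(q-1)[F:k]$. A subsidiary technical point is confirming that the zeta-function computation is genuinely polynomial time in $\log q$ and the size of $X_F$ for the abelian covers at hand, and not merely for bounded characteristic.
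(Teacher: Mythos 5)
Your decomposition into $\chi$-components and the observation that each nontrivial component $e(\chi)R_F$ is a cyclic torsion $\mathcal O_\chi$-module $\cong \mathcal O_\chi/\ell^{v_\chi}$ match the paper exactly. Where you diverge is in computing the exponent $v_\chi$: the paper uses the much more direct fact that $e(\chi)R_F$ is generated as a $\Z_\ell[G]$-module by the class of $e(\chi)\infty_F$ (where $\infty_F$ is a place of $F$ above $\infty$), so $v_\chi$ is simply the smallest $b\ge 0$ for which $\ell^{b}e(\chi)\infty_F$ is a principal divisor. That principality test is carried out with Hess's Riemann--Roch algorithm, which runs in time polynomial in $[F:k]$, $\log q$, and $\log(\ell^b)$; no $L$-function, Stark unit, or class-number-formula machinery is invoked at all.

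Your route through $L_S(u,\chi)$ has two genuine gaps. First, the running-time claim fails: computing the twisted zeta polynomials by Lauder--Wan costs time polynomial in the characteristic $p$ (the paper itself quotes $\tilde{\mathcal O}(p^{12}d^{13}[F:k]^{30})$), not in $\log q$, and no algorithm you cite computes these $L$-polynomials in time polylogarithmic in $q$ for unbounded characteristic. This is precisely why the paper does not use zeta-function computation here. Second, the middle step you flag is a real gap, not a subsidiary point: the Stark element $\alpha_{\mathfrak R}$ is a $P_F$-unit whose divisor $\Theta_F(\mathfrak R)$ has support at the places above $\m$ as well as above $\infty$, whereas $R_F$ concerns the image of $\mathcal O_F^\times$ (the $S_\infty$-units) in $\mathcal D^0_{S_\infty}$. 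Extracting the $S_\infty$-part of $\mathrm{div}(\varepsilon)$ from the leading value of $L_S(u,\chi)$, and showing the remaining discrepancy is an $\ell$-adic unit whenever $\ell\nmid q(q-1)[F:k]$, is a nontrivial refined-Stark statement that you assert but do not establish; the paper avoids it entirely by working directly with $e(\chi)\infty_F$.
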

\begin{corollary}\label{ell ideal class number}
There is a deterministic algorithm that given the rational function field $\F_q(t)$, a totally real finite abelian extension $F/\F_q(t)$ presented as an irreducible polynomial $X_F(y) \in \F_q(t)[y]$ such that $F = \F_q(t)[y]/(X_F(y))$ and a prime $\ell \nmid q(q-1)[F:k]$ computes the cardinality of the $\ell$-primary part of the ideal class group $\p(\cO_F)$ in time polynomial in $\log(q)$ and the size of $X_F$.
\end{corollary}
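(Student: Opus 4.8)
The plan is to deduce the corollary from Theorem~\ref{regulator_theorem} by way of the exact sequence $0 \to R_F \to \cl^0_F \to \p(\cO_F) \to 0$ recalled in the introduction. Since $\Z_\ell$ is flat over $\Z$, passing to $\ell$-primary parts (which, for finite abelian groups, is $-\otimes_\Z\Z_\ell$) is exact, so the sequence gives
\[
\#\,\p(\cO_F)[\ell^\infty] \;=\; h(F)_\ell \big/ \#\,R_F[\ell^\infty],
\]
where $h(F)_\ell$ denotes the $\ell$-part of $h(F)$. Theorem~\ref{regulator_theorem} already outputs the group $R_F[\ell^\infty]$, and hence its order, in time polynomial in $\log q$ and the size of $X_F$, so the corollary reduces to computing $h(F)_\ell$ within the same bound. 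The crucial constraint is that $h(F)$ itself must not be computed, since the $q^{\deg\m}$ running time of Theorem~\ref{ray_class_theorem} is far too large when only $\log q$ is allowed.

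First I would fix a sufficient $\ell$-adic precision. Since $h(F) \le (\sqrt q + 1)^{2 g_F}$ with $g_F$ the genus of $F$, one has $v_\ell(h(F)) = O(g_F\log q)$, and Riemann--Hurwitz together with the conductor--discriminant formula bound $g_F$, hence this valuation, by a polynomial in the size of $X_F$ --- here one uses that $\deg\m$ and $\deg\,\mathrm{disc}(F/\F_q(t))$ are polynomial in the size of $X_F$, although $q^{\deg\m}$ is not. Thus $h(F)_\ell$ is recovered from $h(F)\bmod\ell^m$ for an exponent $m$ polynomial in $\log q$ and the size of $X_F$.

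It then remains to compute $h(F)\bmod\ell^m$ from the Stark-unit data underlying \S\ref{regulator}, rather than by a direct evaluation of an $L$-value, which would again cost $q^{\deg\m}$. Factoring the numerator of the zeta function of $F$ over the nontrivial characters $\chi$ of $G = \mathrm{Gal}(F/\F_q(t))$ gives the analytic class number formula $h(F) = \prod_{\chi\ne 1}L_\chi(1)$, so after fixing $\lambda\mid\ell$ in $\overline{\Q}$ one has $v_\ell(h(F)) = \sum_{\chi\ne 1} v_\lambda(L_\chi(1))$; the hypothesis $\ell\nmid q(q-1)[F:k]$ makes $\Z_\ell[G]$ a maximal order, so that the $\chi$-components of $\cl^0_F\otimes\Z_\ell$, of $R_F\otimes\Z_\ell$, and of the Stark-unit module split off compatibly and the formula can be read off one character at a time. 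The Euler system attached to the Stark units produces, through Kolyvagin's derivative construction, an explicit quantity built from the images of the Stark units modulo $\ell^m$-th powers (equivalently, from derivative classes in $\cl^0_F\otimes\Z/\ell^m$) that bounds $\#\,\cl^0_F[\ell^\infty]$ from above, while the index theorem of Yin and of Ahn--Bae--Jung \cite{yin,ahn} together with the class number formula supplies the matching lower bound; hence $h(F)_\ell$ equals this quantity, and dividing by $\#\,R_F[\ell^\infty]$ yields the answer. I expect the main obstacle to lie exactly here: proving that the Euler-system upper bound is sharp, so that the procedure returns the exact cardinality rather than a mere divisibility, and that the required Kolyvagin data modulo $\ell^m$ --- unlike the Stark units themselves --- is genuinely computable in the claimed time for an arbitrary totally real abelian $F$, not only for the narrow ray class fields treated structurally in \S\ref{regulator}. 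Both points rest on the sharpness of the Stark-unit Euler system, which is where the hypothesis $\ell\nmid q(q-1)[F:k]$ is used.
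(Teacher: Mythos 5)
Your first step is sound and matches the paper: from the exactness of $0 \to R_F \to \cl^0_F \to \p(\cO_F) \to 0$ after tensoring with $\Z_\ell$, one reduces to computing $h(F)_\ell$ and dividing by the order of the $\ell$-part of $R_F$, which Theorem~\ref{regulator_theorem} provides.

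But at that point you take a wrong turn. You assert that ``$h(F)$ itself must not be computed'' because Theorem~\ref{ray_class_theorem} costs $q^{\deg\m}$, and then you try to compute $h(F)_\ell$ from Stark-unit/Euler-system data. That premise is false: Theorem~\ref{ray_class_theorem} is not the only way to compute $h(F)$, and it is not what the paper uses here. The paper instead invokes the Lauder--Wan algorithm \cite{lw}[Theorem~37] to compute the full divisor class number $h(F)$ in $\tilde{\mathcal{O}}(p^{12} d^{13}[F:k]^{30})$ time --- polynomial in the size of $X_F$ for fixed characteristic, and in particular without any $q^{\deg\m}$ blowup. (The paper flags this dependence explicitly in the introduction: ``polylogarithmic in the divisor class number if the characteristic of $k$ is fixed.'') Once $h(F)$ is in hand, extracting $h(F)_\ell$ is trivial, and the corollary follows immediately. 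So the proof is much shorter than what you sketch.

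Your Euler-system route also could not be made to work within the stated resources even if the gap you acknowledge were closed: the paper's Kolyvagin machinery (Theorem~\ref{class_group_number}) runs in time polynomial in $q^{t(\chi)^2}$, which is exponential in the class-group size, not polynomial in $\log q$; and it is developed there only for $F = H_\m$ over $k = \F_q(t)$, whereas the corollary concerns arbitrary totally real abelian $F$. You are right to flag sharpness of the Euler-system bound as a serious obstacle, but the paper never needs that route for this corollary --- it is reserved for determining the group \emph{structure}, not the order. The missing idea in your proposal is simply the appeal to Lauder--Wan's zeta-function algorithm.
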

\noindent By factoring $h(F)$, we can obtain a list of primes that contain all primes dividing $h(\cO_F)$. If $(h(F),q[F:k])=1$, by determining the cardinality of the $\ell$-primary part of $\p(\cO_F)$ for every prime $\ell$ in the list, we can determine $h(\cO_F)$. Since $h(F) = \tilde{\mathcal{O}}(q^g)$ where $g$ is the genus of $F$, factoring $h(F)$ using the Number Field Sieve takes $\tilde{\mathcal{O}}(q^{g^{1/3}})$ time under heuristics \cite{nfs}.\\ \\
\noindent In particular if $\ \p(\cO_F)$  is trivial then $\mathcal{C}l^0_F$ is isomorphic to $R_F$, which is $\Z[G]$-cyclic with any prime over $\infty$ as a generator and we can apply Theorem \ref{divisor_class_structure}.\\ \\
If $\p(\cO_F)$ is not trivial, then we deploy the full machinery of the Euler systems of Stark units to determine the $\ell$-part of $\p(\cO_F)$ for $\ell$ not dividing $q[F:k]$.   We briefly describe this approach below.\\ \\
The intersection of the multiplicative group $S_F$ generated by the Stark units with the unit group $\cO_F^\times$ is of finite index in $\cO_F^\times$ and this index equals $|\p(\cO_F)|$  by the Kummer-Sinnott unit index formula.
Gras conjecture \cite{gra} over function fields is a refinement of the Kummer-Sinnott index formula and it relates the cardinality of the $\ell$-primary part of $\p(\cO_F)$ to the (finite) index $[\cO_F^\times:\cO_F^\times\cap S_F]$, where $S_F$ is the group of Stark units. If $\ell$ is a prime not dividing $q[F:k]$, $\chi$ a non trivial irreducible $\Z_\ell$ character of $G$ and $e(\chi) \in \Z_\ell [G]$ the corresponding idempotent, then Gras conjecture claims
$$ |e(\chi)(\Z_\ell \otimes_\Z (\cO_F^\times/\cO_F^\times \cap S_F)))|= |e(\chi)(\Z_\ell \otimes_\Z \p(\cO_F))|.$$
In its originally formulated context of cyclotomic extensions of $\Q$, Gras conjecture is known to be true as a consequence of the proof of the main conjecture of Iwasawa theory by Mazur and Wiles \cite{mw}. Kolyvagin \cite{kol} gave a more elementary proof as an application of the method of Euler systems that he had just developed. Rubin furthered the theory of Euler systems and using it proved the main conjecture of Iwasawa theory over imaginary quadratic extensions \cite{rub}\cite{rub_book}.\\ \\
Feng and Xu \cite{fen} introduced the method of Euler systems to the function field setting and proved the Gras conjecture when $F=H_\m$ and $k$ is the rational function field. In a recent work \cite[Thm~1.1]{ouk}, Oukhaba and Viguie extended the proof to all totally real abelian extensions $F$ except when $\ell \mid [H_\m:F]$ and $F$ contains the $\ell^{th}$ roots of unity.

\ \\Given $N$ that is a power of a prime $\ell$ not dividing $q[F:k]$, we consider Euler systems of modulus $N$ that starts with an element of $S_F$.  From an Euler system $\Psi$, we have a derived system, called a Kolyvagin system, consisting of of functions $\ka(\ma) \in F^\times$ indexed by $\ma \in B_N$, where $B_N$ is the set of all finite square free product of a certain infinite set of primes in $\cO_k$. The places that appear in the divisor $[\ka(\ma)]$ admit a precise characterization up to an $N^{th}$ multiple due to the properties of Euler systems. For prime $v \nmid \ma$, $[\ka(\ma)]_v=0\mod N$, where $[\ka(\ma)]_v$ denote the projection of $[\ka(\ma)]$ to the primes of $F$ that lie above $v$. For prime $v | \ma$, the relation between $\ka(\ma)$ and
$\ka(\ma/v)$ is governed by a Galois equivariant map $\varphi_v$  where $[\ka (\ma)]_v =\varphi_v (\ka(\ma/v)) \mod N$.

\ \\For an $\alpha\in F^{\times}/F^{\times N}$, if $e(\chi) \alpha$ is an $\ell^c$-th power but not an $\ell^{c+1}$-th power in $F^\times/F^{\times N}(\chi)$, we define $\ell^c$ to be the $\chi$-index of $\alpha$.
We consider a Kolyvagin system of modulus $N$ divisible by $\ell t^2$ starting from a unit $\kappa(\e) \in E$ of
$\chi$-index $t$, where
$t^{\dim(\chi)}$ is the cardinality of the $\chi$-component of
$\cO_F^\times/(\cO_F^\times \cap S_F)$.
Write $\ka(\ma) \stackrel{\chi}{\to} \ka(\ma\pe)$, if
$u T \mathfrak{P} = [e(\chi) \ka(\ma\pe)]_\pe \mod N $ where $T$ is the $\chi$-index of $\ka(\ma)$,
$\mathfrak{P}$ is a prime over $\pe$ in $F$ and
$u\in ((\Z/N\Z [G])(\chi))^{\times}$.
A $\chi$-path starting from $\ka(\e)$:
\[ \ka(\e) \stackrel{\chi}{\to} \ka(\pe_1) \stackrel{\chi}{\to} \ka(\pe_1\pe_2) \stackrel{\chi}{\to} \ldots \stackrel{\chi}{\to} \ka(\pe_1 \pe_2 \ldots \pe_n)\]
is {\em complete} if the $\chi$-index of the last node $\ka(\pe_1 \pe_2 \ldots \pe_n)$ is $1$.
The following theorem (proven in \S~\ref{structure}) says that the $\chi$-component of $\p(\cO_F)$ is completely determined by a complete $\chi$-path.
\begin{theorem}
\label{chi-structure}
Let $\ka(\e) \stackrel{\chi}{\to} \ka(\pe_1) \stackrel{\chi}{\to} \ka(\pe_1\pe_2) \stackrel{\chi}{\to} \ldots \stackrel{\chi}{\to} \ka(\pe_1 \pe_2 \ldots \pe_n)$ be a $\chi$-path from $\ka(\e)$.
Let $\C_i$ be the subgroup of $\p(\cO_F)$ generated by all primes of $F$ above $\pe_1$, $\pe_2$,... $\pe_i$.
Let $t_i$ be the $\chi$-index of $\ka(\pe_1\pe_2 \ldots \pe_i)$. Then $[ \C_i(\chi) : \C_{i-1}(\chi) ] = (t_{i-1} /t_{i})^d$ for $i=1,...,n$, and $t_n=1$ if and only if $\C_n(\chi) = \p_\ell(\cO_F)(\chi)$.  Moreover any $\chi$-path from $\ka(\e)$ can be extended to a complete $\chi$-path.
\end{theorem}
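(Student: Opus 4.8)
The plan is to prove the index formula $[\C_i(\chi):\C_{i-1}(\chi)]=(t_{i-1}/t_i)^d$, $d=\dim(\chi)$, by induction on $i$ along the path, and then to read off the two remaining assertions. Throughout I work in $\chi$-components, which is legitimate because $\ell\nmid[F:k]q(q-1)$: the idempotent $e(\chi)\in\Z_\ell[G]$ is defined, $\cO_\chi:=e(\chi)\Z_\ell[G]$ is the valuation ring of an unramified extension of $\Q_\ell$ of residue degree $d$ (so $|\cO_\chi/N\cO_\chi|=N^d$), and $F$ has no $\ell$-power roots of unity. For a squarefree product $\ma=\pe_1\cdots\pe_i$ along the path put $F(\ma)^\times:=\{x\in F^\times:\ \mathrm{ord}_w(x)\in N\Z\ \text{for every place}\ w\nmid\ma\infty\}$ and $A_\ma:=(F(\ma)^\times/F^{\times N})(\chi)$; the localization sequence relating units, $\ma$-supported divisors and the class group reads, in $\chi$-components,
\[
0\to\bigl(\cO_F^\times/(\cO_F^\times)^N\bigr)(\chi)\to A_\ma\xrightarrow{\ \partial\ }\bigoplus_{w\mid\ma}(\Z/N)(\chi)\xrightarrow{\ \mathrm{cl}\ }\p_\ell(\cO_F)(\chi),
\]
where each Kolyvagin prime $\pe_j$ splits completely in $F$, so $\bigoplus_{w\mid\pe_j}(\Z/N)(\chi)$ is free of rank one over $\cO_\chi/N$ on the class of any $\mathfrak{P}_j\mid\pe_j$; where $\bigl(\cO_F^\times/(\cO_F^\times)^N\bigr)(\chi)\cong\cO_\chi/N$ (as $\infty$ splits completely and $\chi\neq 1$); and where $\mathrm{im}(\mathrm{cl})=\C_i(\chi)$. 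Counting orders gives
\[
|A_\ma|\cdot|\C_i(\chi)|=N^{(i+1)d}\qquad(\star).
\]
I also use the two properties of the Kolyvagin system supplied by the Euler system of Stark units — namely $[\ka(\ma)]_v\equiv 0\pmod N$ for $v\nmid\ma$ (so $\ka(\ma)\in A_\ma$) and the finite--singular comparison $[\ka(\ma\pe)]_\pe\equiv\varphi_\pe(\ka(\ma))\pmod N$ with $\varphi_\pe$ a $G$-isomorphism — together with the function field Gras conjecture \cite{fen,ouk}, which in this normalisation reads $|\p_\ell(\cO_F)(\chi)|=t^d$.

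For the inductive step the upper bound is routine. Write $e(\chi)\ka(\ma_i)=t_i\gamma_i$ with $\gamma_i$ of $\chi$-index $1$; the arrow $\ka(\ma_{i-1})\stackrel{\chi}{\to}\ka(\ma_i)$ forces $[e(\chi)\ka(\ma_i)]_{\pe_i}=u\,t_{i-1}\mathfrak{P}_i$ for a unit $u$, hence $t_i\mid t_{i-1}$ and $[\gamma_i]_{\pe_i}=u(t_{i-1}/t_i)\mathfrak{P}_i$. Since $\ka(\ma_i)$ has divisor supported on $\pe_1,\dots,\pe_i$ modulo $N$, applying $\mathrm{cl}$ to its trivial divisor class places the class of $[\ka(\ma_i)]_{\pe_i}$ inside $\C_{i-1}(\chi)$; thus $\C_i(\chi)/\C_{i-1}(\chi)$ is a cyclic $\cO_\chi$-module annihilated by $t_{i-1}/t_i$, so $[\C_i(\chi):\C_{i-1}(\chi)]$ divides $(t_{i-1}/t_i)^d$.

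The main obstacle is the matching lower bound. The route I would take is to carry the induction with a strengthened hypothesis that also records, for each $j\le i-1$, a \emph{corrected derivative class} $\widetilde\gamma_j\in A_{\ma_j}$ whose divisor is supported on $\ma_j$, whose $\pe_j$-component is a unit multiple of $(t_{j-1}/t_j)\mathfrak{P}_j$, and such that $\widetilde\gamma_1,\dots,\widetilde\gamma_{i-1}$ together with $\bigl(\cO_F^\times/(\cO_F^\times)^N\bigr)(\chi)$ generate $A_{\ma_{i-1}}$ over $\cO_\chi$. Propagating this to stage $i$ amounts to replacing $\gamma_i$ — which a priori carries unwanted, $(N/t_i)$-divisible support away from $\ma_i$ — by a genuine element $\widetilde\gamma_i\in A_{\ma_i}$ with the same $\pe_i$-component; this is exactly where a Chebotarev density argument enters, producing an auxiliary function that trivialises the off-$\ma_i$ part of $\gamma_i$ modulo $\C_{i-1}(\chi)$, the relevant Kummer and class-field extensions being linearly disjoint precisely because $\ell\nmid q(q-1)[F:k]$ and the Euler system of Stark units is non-degenerate (the finite--singular comparison being what forbids any further collapse of $[\widetilde\gamma_i]_{\pe_i}$). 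Granting $\widetilde\gamma_i$, the $\pe_i$-coordinate of $\partial$ on $A_{\ma_i}$ has image exactly $(t_{i-1}/t_i)(\cO_\chi/N)\mathfrak{P}_i$, so $A_{\ma_i}/A_{\ma_{i-1}}\cong\cO_\chi/\ell^{\,m-v_\ell(t_{i-1}/t_i)}\cO_\chi$ where $N=\ell^m$; inserting this and the inductive value of $|A_{\ma_{i-1}}|$ into $(\star)$ forces $|\C_i(\chi)|=(t/t_i)^d$, hence $[\C_i(\chi):\C_{i-1}(\chi)]=(t_{i-1}/t_i)^d$, and re-establishes the strengthened hypothesis at stage $i$.

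Finally I would deduce the rest. Telescoping the index formula and using $\C_0(\chi)=0$ gives $|\C_n(\chi)|=(t/t_n)^d$; since $\C_n(\chi)\subseteq\p_\ell(\cO_F)(\chi)$ and $|\p_\ell(\cO_F)(\chi)|=t^d$ by Gras, we get $\C_n(\chi)=\p_\ell(\cO_F)(\chi)$ if and only if $t_n=1$. For extendability, suppose $t_n>1$; then $\gamma_n$ is nonzero in $(F^\times/F^{\times N})(\chi)/\ell$ and $\C_n(\chi)\subsetneq\p_\ell(\cO_F)(\chi)$, and a Chebotarev argument over the compositum of $F(\mu_N)$, the extension cut out by $\gamma_n$, and the $\ell$-part of the Hilbert class field of $\cO_F$ produces a prime $\pe_{n+1}$ splitting completely in $F$ whose Frobenius makes the finite part of $\gamma_n$ a generator — giving an arrow $\ka(\ma_n)\stackrel{\chi}{\to}\ka(\ma_n\pe_{n+1})$ — and whose prime above it has class outside $\C_n(\chi)$; by the index step just proved, $\C_{n+1}(\chi)\supsetneq\C_n(\chi)$ forces $t_{n+1}<t_n$. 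Iterating at most $v_\ell(t)$ times reaches $t_n=1$, i.e., a complete $\chi$-path, as required.
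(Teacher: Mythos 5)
Your proposal diverges from the paper's proof at the crucial point, and the divergence introduces a gap that the paper cleverly sidesteps. You correctly identify the divisibility $[\C_i(\chi):\C_{i-1}(\chi)]\mid (t_{i-1}/t_i)^d$ (this is the paper's Lemma~\ref{chi-div}, phrased slightly differently), and your final paragraph correctly deduces the ``iff'' and the extendability statements once the index formula is in hand. But the heart of your argument is the attempt to prove the \emph{matching lower bound inductively at each step}, via ``corrected derivative classes'' $\widetilde\gamma_i$ built from $\gamma_i$ by an auxiliary Chebotarev application. That construction is only sketched (``The route I would take is\ldots''; ``Granting $\widetilde\gamma_i$\ldots''), and it carries real work: you must show that the off-$\ma_i$ support of $\gamma_i$ can be absorbed into $\C_{i-1}(\chi)$ \emph{without} degrading the $\pe_i$-component, and that the strengthened inductive hypothesis (that the $\widetilde\gamma_j$ generate $A_{\ma_{i-1}}$ over $\cO_\chi$) propagates. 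None of that is established, and the asserted exactness $|A_\ma|\cdot|\C_i(\chi)|=N^{(i+1)d}$ also needs justification (the kernel of your $\partial$ is not obviously just $(\cO_F^\times/(\cO_F^\times)^N)(\chi)$ — elements supported away from $\ma\infty$ modulo $N$ need only have divisor an $N$-th power of a fractional ideal, not of a principal one).

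The paper avoids the lower bound entirely. Its logic is: (i) each index \emph{divides} $(t_{i-1}/t_i)^d$ (Lemma~\ref{chi-div}); (ii) any $\chi$-path with $t_n>1$ can be extended by one step via Lemma~\ref{chi-link} (which is the Chebotarev/density input, Lemma~\ref{density}), and iterating produces a complete path; (iii) along a complete path ending at $t_m=1$, the product of the divisibility bounds is $(t/1)^d=t^d$, which by Gras equals $|\p_\ell(\cO_F)(\chi)|\geq|\C_m(\chi)|$; so every divisibility is forced to be an equality, including at the steps $i\le n$ of the original path. That global counting step replaces your entire strengthened induction. You actually have all the ingredients for this shortcut in your last paragraph — you invoke Gras, telescoping, and extendability — but you deploy them \emph{after} assuming the index formula, rather than using them \emph{to prove} it. If you reorganize so that extendability and the Gras count come first, the unproven lower bound disappears and the argument closes cleanly, matching the paper's.
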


\ \\The characterization of Theorem~\ref{chi-structure} leads to an algorithm that determines the structure of $\p_\ell(\cO_F)(\chi)$ as a  $\Z_\ell[G]$-module,
and the following theorem is proven in \S~\ref{ideal_class_structure}.
\begin{theorem}\label{class_group_number}
Let $H_\m$ denote the narrow ray class field $H_\m$ of conductor $\m$ over the rational function field $k=\F_q(t)$ and for a prime $\ell$ and a $\Z_\ell$ representation $\chi$, let $t(\chi)$ denote the exponent of the $\chi$ component of $\cO_{H_\m}^\times/(\cO_{H_\m}^\times \cap S_{H_\m})$. There is a deterministic algorithm, that given a generator for an ideal $\m$ in $\F_q[t]$ and a prime $\ell$ not dividing $q(q-1)[H_\m:k]$, finds the structure of $\p_\ell(\cO_{H_\m})(\chi)$  as a $\Z_\ell[Gal(H_\m/k)]$ module in time polynomial in $q^{t(\chi)^2}$ and $[H_\m:k]$ for every non trivial irreducible $\mathbb{Z}_l$ representation $\chi$ of $Gal(H_\m/k)$.
\end{theorem}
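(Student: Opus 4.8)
\noindent The plan is to make Theorem~\ref{chi-structure} effective. Fix a nontrivial irreducible $\Z_\ell$-representation $\chi$ of $G=\mathrm{Gal}(H_\m/k)$ and write $F=H_\m$ for brevity. I would construct, one auxiliary prime at a time, a complete $\chi$-path $\ka(\e)\stackrel{\chi}{\to}\ka(\pe_1)\stackrel{\chi}{\to}\cdots\stackrel{\chi}{\to}\ka(\pe_1\cdots\pe_n)$ and then read off the $\Z_\ell[G]$-module structure of $\p_\ell(\cO_F)(\chi)$ from the chain $\C_0\subset\C_1\subset\cdots\subset\C_n$ it produces. First I would assemble the input data: from the given generator of $\m$, compute a defining polynomial for $F/\F_q(t)$ by the Carlitz--Hayes construction (its size, the genus of $F$, and $[F:k]$ are all polynomial in $[F:k]$); using Hayes' explicit formulas for the Stark units of $F$, compute a $\Z[G]$-generator of $S_F$ and extract a starting unit $\ka(\e)\in\cO_F^\times$ of $\chi$-index $t(\chi)$, in time polynomial in $q^{\deg\m}$ and hence in $[F:k]$. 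I would read off $t(\chi)$ by testing $\ell$-divisibility of $e(\chi)\ka(\e)$ in $F^\times/F^{\times\ell^b}$ for increasing $b$ until it stabilizes (this is $\F_\ell$-linear algebra together with $\ell$-th-power testing in $F^\times$), and then fix $N$ to be the least power of $\ell$ divisible by $\ell\,t(\chi)^2$; this is the modulus of the Euler system and of the derived Kolyvagin system.

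Next I would run the extension loop. Given $\ma=\pe_1\cdots\pe_{i-1}$, the derivative class $\ka(\ma)\in F^\times/F^{\times N}$, and its $\chi$-index $t_{i-1}>1$, I search over primes $\pe$ of $\F_q[t]$ not dividing $\m\ma$ of degree at most an explicit bound $D$ --- a suitable multiple of $\mathrm{ord}_N(q)$, so that $\mu_N$ lies in the residue field at $\pe$, plus a Chebotarev term --- for one with $\ka(\ma)\stackrel{\chi}{\to}\ka(\ma\pe)$. For each candidate $\pe$ that splits completely in $F$, I form $\ka(\ma\pe)$ by applying the Kolyvagin derivative operator $\prod_{\q\mid\ma\pe}D_\q$ to the Euler-system value attached to the conductor $\m\ma\pe$ and reducing modulo $N$, and then test whether $[e(\chi)\ka(\ma\pe)]_\pe = u\,t_{i-1}\,\mathfrak{P}\bmod N$ for some $u\in((\Z/N\Z[G])(\chi))^\times$ --- equivalently, whether the $\chi$-component of the discrete logarithm of $\ka(\ma)\bmod\pe$ has $\ell$-adic valuation exactly that of $t_{i-1}$, which is computable from $\varphi_\pe$. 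Theorem~\ref{chi-structure} guarantees such a $\pe$ exists and that after finitely many steps $t_n=1$, with $n$ bounded by the number of $\Z_\ell[G](\chi)$-generators of $\p_\ell(\cO_F)(\chi)$, hence in terms of $d=\dim\chi$.

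Once $t_n=1$, Theorem~\ref{chi-structure} certifies $\C_n(\chi)=\p_\ell(\cO_F)(\chi)$ and gives $[\C_i(\chi):\C_{i-1}(\chi)]=(t_{i-1}/t_i)^d$. To output the structure, I take the primes of $F$ above $\pe_1,\dots,\pe_n$ as generators of $\p_\ell(\cO_F)(\chi)$ and extract the $\Z_\ell[G]$-relations among their divisor classes from the principal-divisor identities that the Kolyvagin system supplies modulo $N$ (the divisor $[\ka(\ma)]$ is trivial away from $\ma$ and prescribed by $\varphi_\q$ at each $\q\mid\ma$, so it yields such a relation); since $N$ exceeds $t(\chi)$ the resulting presentation is exact, and a Smith-normal-form computation over the order $\Z_\ell[G](\chi)$ produces the invariant factors. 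For the running time: every field operation takes place in extensions of $\F_q$ of degree polynomial in $[F:k]$, in $D$, and in the degrees $\le iD$ of the auxiliary conductors; the dominant cost is the search over the $O(q^D/D)$ primes of degree at most $D$, and tracking $D$ through $N=\ell\,t(\chi)^2$ gives the claimed polynomial dependence on $q^{t(\chi)^2}$ and $[F:k]$.

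The hard part will be bounding the prime search rigorously: Theorem~\ref{chi-structure} only asserts the \emph{existence} of a good $\pe$, so one must supply an effective Chebotarev density theorem over function fields --- via Weil's bound on the number of degree-$D$ primes with prescribed Frobenius --- for the extension $F(\mu_N,\sqrt[N]{\ka(\ma)})/\F_q(t)$, which requires bounding both its degree (a divisor of $N\,\phi(N)\,[F:k]$) and, through Riemann--Hurwitz, its genus in terms of $N$, the genus of $F$, and the degrees of the places above $\m\ma$ and $\infty$, and then checking that the good primes have density bounded below so that one occurs among primes of degree $\le D$. A secondary, more routine difficulty is carrying the $\Z_\ell[G]$- and $\chi$-component bookkeeping consistently through the derivative-class computations and through the change of conductor from $\m\ma$ back to $\m$, so that the final presentation of $\p_\ell(\cO_F)(\chi)$ is provably correct.
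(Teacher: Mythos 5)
Your proposal tracks the paper's own proof closely: build a complete $\chi$-path by iteratively searching for auxiliary primes of bounded degree, compute the Kolyvagin derivative classes and their $\chi$-indices from the explicit Carlitz--Hayes Euler system $\xi(\ma)=\N_{K_{\m\ma}/H_\ma}(\lambda_\m-\sum_{\pe\mid\ma}\lambda_\pe)$ using Riemann--Roch to test $\ell$-th powers, and read off the $\Z_\ell[G]$-module structure of $\p_\ell(\cO_F)(\chi)$ from the chain of index jumps $[\C_i(\chi):\C_{i-1}(\chi)]=(t_{i-1}/t_i)^d$. The effective Chebotarev bound you flag as the hard part is precisely what Lemma~\ref{density} already supplies (degree of $\mathfrak{P}$ at most $\max\{\mathrm{ord}_N(q),\,2\log_q([F:k]),\,2(s+3)\log_q(N)\}$), so that worry is addressed within the paper.
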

\noindent Since $t(\chi)^{\dim(\chi)}$ is the cardinality of the $\chi$ component of $\cO_{H_\m}^\times/(\cO_{H_\m}^\times \cap S_{H_\m})$, by Gras' conjecture $t(\chi)^{\dim(\chi)} = |\p_\ell(\cO_{H_\m})(\chi)|$. The exponential dependence on the size of $\p_\ell(\cO_{H_\m})(\chi)$ would be less of a concern if the regulator is expected to be large and the ideal class group is expected to be small.  The function field analog of the Cohen-Lenstra heuristics \cite{fw,achter} conjecture that an isomorphism class of an abelian $\ell-$group $H$ occurs as the $\ell$ primary part of the divisor class group of function fields with probability $$\frac{1}{Aut(H)} \prod_{i=1}^{\infty} (1 - \ell^{-i}).$$
In particular, it predicts that the $\ell-$primary part of the divisor class group is more likely to be cyclic than otherwise. If the point $\infty$ is chosen at random from the rational places that split completely in $F$, then it is likely that aforementioned cyclic subgroup is contained in the subgroup generated by the places in $F$ above $\infty$. Thus the regulator is expected to be large and the ideal class group is expected to be small.

\section{Stickelberger Elements and Stark Units}\label{notation}
\subsection{Cyclotomic Extensions}
In this subsection, we build notation and recount properties of cyclotomic function fields over global function fields based on the theory of sign-normalized Drinfeld modules developed by Hayes. Refer to \cite{hay} for a detailed description and proofs of claims made here.\\ \\
Let $k_\infty$ be a completion of $k$ at $\infty$. Let $\F(\infty)$ be the constant field of $k_\infty$ and $\Omega$ the completion of an algebraic closure of $k_\infty$. Let $\mathcal{V}_\infty$ be the extension of the normalized valuation of $k_\infty$ at $\infty$ to $\Omega$. Fix a sign function $sgn: k_\infty^\times\longrightarrow \F(\infty)^\times$, a co-section of the inclusion morphism $\F(\infty)^\times \hookrightarrow k_\infty^\times$ such that $sgn(\zeta)=1$ for every $\zeta$ in the group of $1$ units $U_\infty^{(1)}$. Let $\rho:\cO_k \longrightarrow \Omega \langle \tau_q \rangle$ be a $sgn$-normalized rank $1$ Drinfeld-module. Here, $\Omega \langle \tau_q \rangle$ is the left twisted polynomial ring with $\tau_q$ satisfying the relation $\tau_q x = x^q \tau_q, \forall x \in \Omega$. The image of $a\in \cO_k$ under $\rho$ is denoted by $\rho_a$.\\ \\
Let $H_\e$ denote the maximal real unramified abelian extension of $k$ and $K_\e$, the normalizing field with respect to $sgn$ obtained by adjoining to $k$ the coefficients of $\rho_a$ for every $a \in \cO_k$. The extension $H_\e/k$ is contained in $K_\e/k$  with $[K_\e:H_\e]$ equalling $q-1$ and the primes in $H_\e$ above $\infty$ are totally ramified in $K_\e/H_\e$.\\ \\
For an integral ideal $\mathfrak{m} \subset \cO_k$, define the $\mathfrak{m}$-torsion points $\Lambda_{\rho}[\mathfrak{m}]:=\{w \in \Omega | \rho_a(w)=0, \forall a\in \mathfrak{m}\}$. As $\cO_k$-modules, $\Lambda_\rho[\m]$ is cyclic and isomorphic to $\cO_k/\m \cO_k$. We fix a generator $\lambda_\mathfrak{m} \in \Lambda_\rho[\mathfrak{m}]$ of $\Lambda_\rho[\m]$ as an $\cO_k$-module as described below.\\ \\
Associated with every rank 1 $\cO_k$-submodule of $\Omega \langle \tau_q \rangle$ is a rank $1$ Drinfeld-module \cite[\S~4]{hay_class}. Let $\xi(\m) \in \Omega$ denote the invariant determined up to an $\F_{q^d_\infty}^\times$ multiple by the property that the $\cO_k$-submodule $\xi(\m)\m$ corresponds to a $sgn$-normalized Drinfeld-module. Define $e_\m(z):= z \prod_{0\neq \gamma \in \m}(\frac{1-z}{\gamma})$ to be the exponential function associated with the $\cO_k$-submodule $\m$. Then $\xi(\m)e_\m(1)$ is determined by $\m$ and $sgn$ up to an $\F_{q^{d_\infty}}^\times$ multiple and generates $\Lambda_{\rho}[\m]$ as an $\cO_k$-module. Set $\lambda_\m := \xi(\m)e_\m(1)$.\\ \\ 
The cyclotomic function field $K_\mathfrak{m}$ is obtained by adjoining $\Lambda_{\rho}[\mathfrak{m}]$ to $K_\e$. Since $\lambda_\mathfrak{m}$ generates $\Lambda_\rho[\mathfrak{m}]$ as an $\cO_k$-module, $K_\mathfrak{m}=K_\e(\lambda_\mathfrak{m})$. The extension $K_\mathfrak{m}/k$ is abelian. For the class of $a \in \cO_k$ in $(\cO_k/\mathfrak{m})^\times$, there is a unique $\sigma_a \in Gal(K_\m/K_\e)$ such that $\sigma_a(\lambda_\mathfrak{m})= \rho_a(\lambda_\mathfrak{m})$ and thus $Gal(K_\m/K_\e) \cong (\cO_k/\mathfrak{m})^\times$. The maximal real subfield of $K_\mathfrak{m}$ denoted by $H_\m$ is the ray class field modulo $\m$ and is independent of $sgn$. Further,  $H_\m=H_\e(\lambda_\m^{q-1})$ and $[K_\m:H_\m] = q-1$.\\ \\
From now on, let $F/k$ be a finite abelian extension of conductor $\m$ and Galois group $G:=Gal(F/k)$.  Let $\e= \cO_k$ denote the unit ideal. For a non zero ideal $\f  \subseteq \cO_k$, define $F_\f:= K_\f \cap F$, $F_\f^+:= H_\f \cap F$ and for $\f \neq \e$, $\lambda_{\f,F}:=\mathcal{N}_{K_\f/F_\f}(\lambda_\f)$. \\ \\
For a finite Galois extension $L/k$, let $\cO_L$ denote the ring of integers of $L$. Let $\cl^0_L$ and $\p(\cO_L)$ refer to the degree zero divisor class group  of $L$ and the ideal class group of $\cO_L$ respectively. For $f \in L$, let $[f]_L$ denote its divisor. If the field in question is clear from the context, we will drop the subscript $L$. For a sub extension $\bar{L}/k \subseteq L/k$ and an integral ideal $\mathfrak{a} \subseteq \cO_{\bar{L}}$, let  $(\mathfrak{a},L/\bar{L}) \in Gal(L/\bar{L})$ denote the Artin symbol and $\mathcal{N}_{L/\bar{L}}$ the norm map. The cardinality of the residue class ring $\cO_{\bar{L}}/\mathfrak{a}$ is denoted by $\mathcal{N}(\mathfrak{a})$. For an integer $b$, let $\mu_b$ indicate the group of $b^{th}$-roots of unity.
\subsection{Elliptic and Stark Units}
Let $B_F$ be the $\Z[Gal(F_\e/k)]$-submodule of $F_\e^\times$ generated by $\F_q^\times$ and $\{\lambda_{\f,F}\}_\f$, where $\f$ ranges over non zero integral ideals of $\cO_k$. The group of elliptic units $E_F$, which is of finite index in $\cO_{F_\e}^\times$ is $E_F:=B_F \cap \cO_{F_\e}^\times$.\\ \\
We next define the group of Stark units, whose intersection with $\cO_F^\times$  is  of finite index in $\cO_F^\times$. For an irreducible complex character $\vartheta$ of $G$ and $s \in \mathbb{C}$ with $\Re(s)>1$, let
$$L_{F/k}(s,\vartheta):=\prod_{\mathfrak{b} \notin P_F}{(1-\vartheta((F/k, \mathfrak{b})) (\mathcal{N}(\mathfrak{b}))^{-s})^{-1}}$$ be the Artin $L$-function attached to $\vartheta$. The product is over all places in $k$ excluding $P_F$, the set of places that ramify in $F/k$.\\ \\
Extend $\vartheta$ linearly to $\mathbb{C}[G]$. The Stickelberger element $\Theta_F$ is the unique element in $\mathbb{C}[G]$ such that for all non trivial irreducible complex character $\vartheta$ of $G$,  $$\vartheta(\Theta_F) = (q-1) L_{F/k}(0,\bar{\vartheta}),$$ where  $\bar{\vartheta}$ is the complex conjugate of $\vartheta$. From the proof of the  Brumer-Stark conjecture over function fields \cite[Chapter V]{del}\cite{hay}, $\Theta_F\in \Z[G]$ and $\Theta_F$ annihilates $\cl_F^0$. Thus, for a divisor $\mathfrak{D}$ of degree $0$ in $F$, $\exists\ \alpha_\mathfrak{D} \in F$ uniquely determined up to a root of unity such that $\Theta_F(\mathfrak{D}) =[\alpha_\mathfrak{D}]_F$. The following stronger claim is proven in \cite{del}\cite{hay}[Theorem 1.1].\\ \\
Let $\mathfrak{R}$ be a prime divisor in $F$. There exists $\alpha_{\mathfrak{R}} \in F$ unique up to a root of unity such that  
\begin{enumerate}
\item $\Theta_F \in \Z[G]$
\item If $P_F$ is of cardinality greater than $1$, then $\Theta_F (\mathfrak{R}) = [\alpha_{\mathfrak{R}}]_F$.\\
If $P_F = \{\mathfrak{b}\}$ for a place $\mathfrak{b}$ in $k$, then $\Theta_F(\mathfrak{R}) + \mathfrak{B}  = [\alpha_{\mathfrak{R}}]_F$ where $\mathfrak{B}$ is the sum of places in $F$ above $\mathfrak{b}$.
\item $F(\alpha_{\mathfrak{R}}^{1/(q-1)})/k$ is abelian.
\end{enumerate}
Let $J_{F} \subset \Z[G]$ denote the annihilator of the roots of unity in $F$. Since $F(\alpha_{\mathfrak{R}}^{1/(q-1)})/k$ is abelian, from the characterization of $J_F$ in \cite{hay}[Lemma 2.5], it follows that for an $\eta \in J_F$, there exists $\lambda(\mathfrak{R},\eta) \in F$ unique up to a root of unity such that $\lambda(\mathfrak{R},\eta)^{q-1} = \alpha_{\mathfrak{R}}^{\eta}$.\\ \\
The subgroup $S_F$ of $F^\times$ generated by $\F_q^\times$ and  $\lambda(\mathfrak{R},\eta)$ as $\mathfrak{R}$ ranges over prime divisors in $F$ dividing $\infty$ and $\eta$ ranges over $J_F$ is defined as the group of Stark units. The Stark units are thus supported on places above $\infty$ and $\m$, and are thus $P_F$-units.\\ \\
Hayes \cite{hay} gave an explicit description of the $\alpha_{\mathfrak{R}}$ in terms of the $\m$-torsion points $\Lambda_\rho(\m)$. In particular, $\lambda_{\m}^{q-1}$ is a Stark unit in $H_\m$ \cite{hay}[\S~ 4,6] and $S_F$ is generated by $\mu(F)$ and $N_{H_\f/F_\f}(\lambda_\f^{(N(\g)- (\g,k_m/k))})$ \cite{ouk}[\S~ 3] where $\f \subset \cO_k$ ranges over non zero ideals and $\g \subset \cO_k$ ranges over non trivial ideals coprime to $\f$.
\section{Algorithms for Computing the Divisor Class Groups}\label{comp}
\noindent In this section, we develop the algorithms for the proofs of Theorem~\ref{ray_class_theorem}, Theorem \ref{stickelberger_generating_set} and Theorem~\ref{divisor_class_structure}. 
In this section, we assume that the point at infinity $\infty$ in $k$ chosen is of degree $1$. The index theorems concerning the Stickelberger ideal are known to hold only under this assumption. The rest of the algorithms, in particular those for determining the $\chi$ part of the ideal class number and ideal class group do not require this assumption.\\ \\
Further, for the following subsection wherein the computation of $\lambda_{\f,F}$ for $\f \mid \m$ is addressed, we restrict ourselves to the case where $k$ is $\F_q(t)$. 
\subsection{Computation of the narrow ray class field and Stark units}\label{compute_lambda}
For this subsection, we set $k$ to be the rational function field $\F_q(t)$. Since $\infty$ is of degree $1$, without loss of generality we may assume that $\cO_k = \F_q[t]$. For otherwise, we can perform an appropriate change of variable from $t$ to $s$ to ensure $k= \F_q(s)$ and $\cO_k=\F_q[s]$.
\begin{lemma}\label{lambda_lemma} There is a deterministic algorithm that given $\F_q(t)$ and a generator $f(t) \in \F_q[t]$ of an ideal $\f \subset \F_q[t]$, computes the minimal polynomial of $\lambda_\f$ over $\F_q(t)$ in time polynomial in $q^{\deg(\f)}$.
\end{lemma}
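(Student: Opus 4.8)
The plan is to reduce the lemma to the explicit computation of a Carlitz cyclotomic polynomial. Since $k=\Fq(t)$, $\cO_k=\Fq[t]$ and $\infty$ is the degree one place, the $sgn$-normalized rank one Drinfeld module $\rho$ is the Carlitz module $C$, determined by $C_t(x)=tx+x^{q}$ and extended to $a=\sum_i a_i t^{i}\in\Fq[t]$ by $\Fq$-linearity together with $C_{t^{i}}=C_t\circ C_{t^{i-1}}$ and $C_a=\sum_i a_i C_{t^{i}}$. By the discussion in \S\ref{notation}, $\lambda_\f$ generates the cyclic $\cO_k$-module $\Lambda_C[\f]\cong\cO_k/\f$, so its $\cO_k$-annihilator is exactly $\f$ and $Gal(K_\f/k)\cong(\cO_k/\f)^{\times}$ acts transitively on the generators $\{C_a(\lambda_\f):(a,\f)=1\}$ of $\Lambda_C[\f]$. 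Hence the minimal polynomial of $\lambda_\f$ over $k$ is
\[
 \Phi_\f(x)\;=\;\prod_{(a,\f)=1}\bigl(x-C_a(\lambda_\f)\bigr),
\]
of degree $|(\cO_k/\f)^{\times}|$: it is irreducible over $k$ because its roots form a single $Gal(K_\f/k)$-orbit and are pairwise distinct, and it lies in $\Fq[t][x]$ because each root $C_a(\lambda_\f)$ is integral over $\Fq[t]$ — being a root of the polynomial $x^{q^{\deg(\f)}-1}+\sum_{i<\deg(\f)}c_i x^{q^{i}-1}$, monic in $x$, obtained from $C_\f(x)=\sum_{i=0}^{\deg(\f)}c_i x^{q^{i}}$ — and $\Fq[t]$ is integrally closed in $k$. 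So it suffices to compute $\Phi_\f\in\Fq[t][x]$ in time polynomial in $q^{\deg(\f)}$.

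First I would compute $C_\f(x)=\sum_{i=0}^{\deg(\f)}c_i(t)\,x^{q^{i}}$ explicitly by iterating the composition $C_{t^{i}}=C_t\circ C_{t^{i-1}}$ and summing according to $f=\sum a_i t^{i}$; this yields $c_0=f$, $c_{\deg(\f)}=1$ and all $c_i$ of $t$-degree at most $q^{\deg(\f)}$ (a crude bound that suffices), in time polynomial in $q^{\deg(\f)}$. Since the roots of $C_\f$ are exactly the $\f$-torsion points, grouping them by their $\cO_k$-order gives the factorization $C_\f(x)=\prod_{\mathfrak d\mid\f}\Phi_{\mathfrak d}(x)$, the product over monic divisors $\mathfrak d$ of $f$, where $\Phi_\e(x):=x$ and, for $\mathfrak d\neq\e$, $\Phi_{\mathfrak d}$ is the minimal polynomial of a generator of $\Lambda_C[\mathfrak d]$, of degree $|(\cO_k/\mathfrak d)^{\times}|$ (so the degrees add up to $q^{\deg(\f)}=\deg_x C_\f$).

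The extraction of $\Phi_\f$ is then a recursion up the divisor lattice of $\f$: enumerate the monic divisors $\mathfrak d$ of $f$ in order of increasing degree and, having already computed $\Phi_{\mathfrak c}$ for every proper divisor $\mathfrak c$ of $\mathfrak d$, set
\[
 \Phi_{\mathfrak d}(x)\;:=\;C_{\mathfrak d}(x)\,\Big/\!\!\prod_{\mathfrak c\mid\mathfrak d,\ \mathfrak c\neq\mathfrak d}\Phi_{\mathfrak c}(x),
\]
an exact division in $\Fq[t][x]$. (Alternatively one may use the Carlitz--M\"obius identity $\Phi_\f(x)=\prod_{\mathfrak d\mid\f}C_{\f/\mathfrak d}(x)^{\mu(\mathfrak d)}$, or the recursion $\Phi_{\pe^{n}}(x)=\Phi_{\pe}(C_{\pe^{n-1}}(x))$ on prime powers together with composed sums $\Phi_{\f_1}\boxplus\Phi_{\f_2}$ for coprime $\f_1,\f_2$.) The number of monic divisors of $f$ is at most $q^{\deg(\f)+1}$, and each step is a bounded amount of multiplication, exact division and substitution of polynomials over $\Fq[t]$.

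I expect the main obstacle to be essentially bookkeeping: establishing a priori bounds on the bit-sizes of all intermediate objects so that the running time is provably polynomial in $q^{\deg(\f)}$. Each $\Phi_{\mathfrak d}(x)$ has $x$-degree $|(\cO_k/\mathfrak d)^{\times}|\le q^{\deg(\f)}$; for the $t$-degrees of its coefficients, one combines a bound on the $t$-degrees of the Carlitz coefficients $c_i$ with a Newton polygon analysis of $C_{\mathfrak d}(x)/x$ at $\infty$, which controls the order of the pole at $\infty$ of each generator of $\Lambda_C[\mathfrak d]$; since the coefficients of $\Phi_{\mathfrak d}$ are elementary symmetric functions of these $\Fq[t]$-integral generators, their $t$-degrees are $q^{O(\deg(\f))}$. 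This, together with the standard cost estimates for arithmetic of polynomials in $\Fq[t][x]$ (multiplication, exact division and composition of bit-size $B$ polynomials in $B^{O(1)}$ operations in $\Fq$), gives the claimed bound. A minor additional point is to verify the factorization $C_\f=\prod_{\mathfrak d\mid\f}\Phi_{\mathfrak d}$ and the irreducibility of each $\Phi_{\mathfrak d}$, which follow from the structure of $\Lambda_C[\f]$ as a cyclic $\cO_k/\f$-module and the transitivity of the Galois action on its generators.
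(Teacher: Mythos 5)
Your proof is correct but takes a genuinely different route from the paper's. The paper factors $\f=\prod_i\q_i^{c_i}$, for each prime power reads off the minimal polynomial of $\lambda_{\q_i^{c_i}}$ as the quotient $\rho_{q_i^{c_i}(t)}(y)/\rho_{q_i^{c_i-1}(t)}(y)$, and then glues coprime factors $\ma\mathfrak b=\f$ by a CRT-style construction on the Carlitz module: writing $1=c(t)a(t)+d(t)b(t)$, it forms $\rho_{c(t)}(\hat\lambda_{\mathfrak b})+\rho_{d(t)}(\hat\lambda_{\ma})$ and brute-force searches over the at most $q^{\deg\f}$ candidate pairs of module generators $(\hat\lambda_{\ma},\hat\lambda_{\mathfrak b})$ for one generating $\Lambda_\rho[\f]$. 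Your divisor-lattice recursion $\Phi_{\mathfrak d}=C_{\mathfrak d}\big/\prod_{\mathfrak c\mid\mathfrak d,\,\mathfrak c\neq\mathfrak d}\Phi_{\mathfrak c}$ on Carlitz cyclotomic polynomials avoids both the compositum arithmetic and the search, and exploits the observation that $\lambda_\f$ is only determined up to $Gal(K_\f/k)$-conjugacy, so all generators of $\Lambda_\rho[\f]$ share one minimal polynomial and none need be singled out. The bookkeeping you defer is a genuine obligation but routine: the pole orders at $\infty$ of the $\f$-torsion points are $q^{O(\deg\f)}$, so each coefficient of each $\Phi_{\mathfrak d}$ has $t$-degree $q^{O(\deg\f)}$, and the number of monic divisors of $f$ is at most $2^{\deg\f}$, giving a running time polynomial in $q^{\deg\f}$ as required. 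The paper's route does return a concrete element $\lambda_\f$ inside a compositum, which the later lemmas implicitly lean on when building $\lambda_{\f,F}$, but your representation $\F_q(t)[y]/(\Phi_\f(y))$ is equally usable and cleaner for the lemma as stated.
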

\begin{proof} Let $\f=\q_1^{c_1} \q_2^{c_2} \ldots \q_n^{c_n}$ be the factorization into powers of prime ideals. Using Berlekamp's algorithm \cite{ber}, we can obtain such a factorization deterministically in time polynomial in $\deg(\f)$ and $q$. From the factorization, obtain for each $i \in \{1,2,\ldots,n\}$, a monic irreducible $q_i(t) \in \F_q[t]$ such that $\q_i^{c_i} = (q_i^{c_i}(t))$.\\ \\
We first describe the computation of $\lambda_{\q_i^{c_i}}$ based on standard results found in \cite{hay_exp}\cite{ros}[Chap 12]. %From \cite[\S~4]{hay}, for a prime power ideal $\q^c$, $\lambda_{\q^c}$ can be computed as a root of its minimal polynomial $\rho_{\q^a}(y)/\rho_{\q^{a-1}}(y) \in K_\e[y]$.
For $k=\F_q(t)$ and $\cO_k = \F_q[t]$, $\rho$ is the ring homomorphism that maps $t$ to $\rho_t = t + \tau_q$ and thus $\rho_t(y) = t + y^q$. Further, the minimal polynomial of $\lambda_{\q_i^{c_i}}$ over $\F_q(t)$ is $$\rho_{q_i^{c_i}(t)}(y)/\rho_{q_i^{c_i-1}(t)}(y).$$ Since $\rho_{q_i^{c_i}(t)}$ and $\rho_{q_i^{c_i}(t)}$ have degrees $c_i \deg(\q_i)$ and $(c_i-1) \deg(\q)$ in $\tau_q$ respectively, $\rho_{q_i^{c_i}(t)}(y)$ is of degree $q^{c_i \deg(\q_i)}$ and $\rho_{q_i^{c_i}(t)}(y)$ is of degree $q^{(c_i-1) \deg(\q)}$. Thus in time polynomial in $q^{c_i \deg(\q_i)}$ we can compute the minimal polynomial of $\lambda_{\q_i^{c_i}}$ over $\F_q(t)$.\\ \\ 
As an induction hypothesis, assume that $\lambda_{\ma}$ and $\lambda_{\mathfrak{b}}$ have been computed for all non trivial and relatively prime ideals $\ma$ and $\mathfrak{b}$ such that $\f = \ma \mathfrak{b}$.\\ \\
Let $a(t)$ and $b(t)$ respectively generate $\ma$ and $\mathfrak{b}$.
%Set $a(t)$ to $\prod_{i=1}^{n-1} (q_i(t))^{c_i}$ and $b(t)$ to $(q_n(t))^{c_n}$. This implies $\f^\prime = a(t)\F_q[t]$ and $\q_n^{c_n} = b(t)\F_q[t]$.
Using the extended Euclidean algorithm over $\F_q[t]$, compute $c(t),d(t) \in \F_q[t]$ such that $1 = c(t)a(t)+d(t)b(t)$. Since $\rho$ is a ring homomorphism, $\rho_1 = \rho_{c(t)}\rho_{a(t)}+ \rho_{d(t)}\rho_{b(t)}$ and its application on an $\F_q[t]$ module generator $\lambda$ of $\Lambda_\f$ yields $$ \lambda =  \rho_{c(t)}(\rho_{a(t)}(\lambda))+ \rho_{d(t)}(\rho_{b(t)}(\lambda)).$$
Since the $\F_q[t]$ submodules of $\Lambda_\f$ generated by $\rho_{a(t)}(\lambda)$ and $\rho_{b(t)}(\lambda)$ are respectively $\Lambda_{\mathfrak{b}}$ and $\Lambda_{\ma}$, it follows that there exist $\F_q[t]$ module generators $\hat{\lambda}_{\ma}$ and $\hat{\lambda}_{\mathfrak{b}}$ of $\Lambda_{\ma}$ and $\Lambda_{\mathfrak{b}}$ respectively such that $\rho_{c(t)}(\hat{\lambda}_{\mathfrak{b}})+ \rho_{d(t)}(\hat{\lambda}_{\ma})$ generates $\Lambda_\f$ as an $\F_q[t]$ module. \\ \\
Given an element in $\Lambda_\f$, since we know the factorization of $\f$, we can efficiently test if it generates $\Lambda_\f$ as an $\F_q[t]$ module by testing if a proper factor of $\f$ annihilates it. Since there are $|(\F_q[t]/\ma)^\times||(\F_q[t]/\mathfrak{b})^\times| \leq q^{\deg \f}$ choices for $(\hat{\lambda}_{\ma},\hat{\lambda}_{\mathfrak{b}})$ we may try them all to find a $\F_q[t]$ module generator $\lambda_\f$ of $\Lambda_\f$.
\end{proof}
% $\square$ \\ \\
% Thus we can choose $\rho_{c(t)}(\lambda_{\q_n^{c_n}})+ \rho_{d(t)}(\lambda_\f^\prime)$ as $\lambda_\f$ and the lemma follows. $\square$.\\ \\
\noindent As a corollary, given a generator of an ideal $\f \subset \cO_k$, we can construct $K_\f$ as $\F_q(t)(\lambda_\f)$ and $H_\f$ as $\F_q(t)(-\lambda_\f^{q-1})$ in time polynomial in $q^{\deg(\f)}$.\\ \\
For a totally real finite abelian extension $F$ presented as an irreducible polynomial $X_F(y) \in \F_q(t)[y]$ such that $F = \F_q(t)[y]/(X_F(y))$ along with a generator for the conductor $\m$ of $F$, we can explicitly compute the inclusion $F \hookrightarrow H_\m$ as follows. Factor $X_{F}(y)$ in $H_\m[y]$ and from the resulting splitting express a root of $X_F(y)$ as a polynomial in $-\lambda_\m^{q-1}$ with $\F_q(t)$ coefficients. The factorization takes time polynomial in $q$ and $[H_\m:k]$ and the size of $X_F$ \cite{pos}. Thus the total running time for computing the inclusion is bounded by a polynomial in $q^{\deg(\m)}$. As a consequence for an ideal $\f$ dividing $\m$, by considering $H_\f$ and $F$ as subfields of $H_\m$, we can construct $F_\f = H_\f \cap F$ in time polynomial in $q^{\deg(\m)}$ and the size of $X_F$.
\begin{lemma}\label{lambdaF_lemma} There is a deterministic algorithm that given $\F_q(t)$, a totally real finite abelian extension $F/\F_q(t)$ presented as an irreducible polynomial $X_F(y) \in \F_q(t)[y]$ such that $F = \F_q(t)[y]/(X_F(y))$ and a generator of the conductor $\m$ of $F$ finds $\lambda_{\f,F}$ for all $\f$ dividing $\m$ in time polynomial in $q^{\deg(\m)}$ and the size of $X_F$.
\end{lemma}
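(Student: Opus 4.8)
\emph{Strategy.} Since $\lambda_{\f,F}=\mathcal{N}_{K_\f/F_\f}(\lambda_\f)$, the plan is to realise every $K_\f$ with $\f\mid\m$, and every $F_\f=K_\f\cap F$, as explicit subfields of the single cyclotomic field $K_\m$, to read off the subgroup $\mathrm{Gal}(K_\f/F_\f)$ of $\mathrm{Gal}(K_\m/k)\cong(\cO_k/\m)^\times$ from the Galois correspondence, and then to evaluate $\lambda_{\f,F}$ as an explicit product of Galois conjugates of $\lambda_\f$ inside $K_\m$. First factor $\m$ with Berlekamp's algorithm and list its divisors; since $\m=\prod_i\q_i^{c_i}$ has $\prod_i(c_i+1)\le 2^{\deg\m}\le q^{\deg\m}$ divisors, this list has length at most $q^{\deg\m}$. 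Apply Lemma~\ref{lambda_lemma} to obtain the minimal polynomial over $k$ of an $\cO_k$-module generator $\lambda_\m$ of $\Lambda_\rho[\m]$, presenting $K_\m=k(\lambda_\m)$; since $k=\F_q(t)$ and $\rho_t=t+\tau_q$ has coefficients in $k$ we have $K_\e=k$, so $\mathrm{Gal}(K_\m/k)\cong(\cO_k/\m)^\times$ with $\sigma_a(\lambda_\m)=\rho_a(\lambda_\m)$. For each $\f\mid\m$, with $m,f\in\F_q[t]$ monic generators of $\m,\f$ (so $m/f\in\F_q[t]$), set $\lambda_\f:=\rho_{m/f}(\lambda_\m)\in K_\m$, reduced modulo the minimal polynomial of $\lambda_\m$: since $\rho_f(\lambda_\f)=\rho_m(\lambda_\m)=0$ and $\rho_{m/f}$ has image of cardinality $|\cO_k/\f|$ on the cyclic module $\Lambda_\rho[\m]\cong\cO_k/\m$, this $\lambda_\f$ is an $\cO_k$-module generator of $\Lambda_\rho[\f]$, hence a legitimate and (across all $\f$) mutually compatible choice of torsion point, and $K_\f=k(\lambda_\f)\subseteq K_\m$. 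Each $\rho_a$ is obtained by substituting $\rho_t=t+\tau_q$ into $a(t)$, in polynomial time.

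\emph{Locating $F$ and the groups $\mathrm{Gal}(K_\f/F_\f)$.} Using the construction described immediately before this lemma, compute the embedding $F\hookrightarrow H_\m\subseteq K_\m$ by factoring $X_F$ over $H_\m$ and writing a root $\theta$ of $X_F$ as a polynomial in $-\lambda_\m^{q-1}$; this takes time polynomial in $q^{\deg\m}$ and the size of $X_F$, and $\theta$ is a primitive element of $F$ over $k$ inside $K_\m$. Run over the at most $q^{\deg\m}$ elements $a\in(\cO_k/\m)^\times$: for each, form $\sigma_a$ as the $k$-automorphism of $K_\m$ with $\lambda_\m\mapsto\rho_a(\lambda_\m)$ (reduced mod the minimal polynomial), and test $\sigma_a(\theta)=\theta$. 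The passing set $U_F\subseteq(\cO_k/\m)^\times$ is exactly $\mathrm{Gal}(K_\m/F)$. By the Galois correspondence in $K_\m/k$, the field $K_\f$ corresponds to $A_\f:=\ker\!\big((\cO_k/\m)^\times\to(\cO_k/\f)^\times\big)$, the field $F$ to $U_F$, and $F_\f=K_\f\cap F$ to the subgroup $A_\f U_F$; hence restriction $(\cO_k/\m)^\times\twoheadrightarrow(\cO_k/\m)^\times/A_\f\cong(\cO_k/\f)^\times$ carries $U_F$ onto $\mathrm{Gal}(K_\f/F_\f)$. In other words $\mathrm{Gal}(K_\f/F_\f)$ is the set of residue classes modulo $\f$ of the elements of $U_F$, which we have computed.

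\emph{Computing the norms.} For each $\f\mid\m$ with $\f\ne\e$, choose one lift $a\in\F_q[t]$ of each class in $\mathrm{Gal}(K_\f/F_\f)$ and put
$$\lambda_{\f,F}=\mathcal{N}_{K_\f/F_\f}(\lambda_\f)=\prod_{a}\sigma_a(\lambda_\f)=\prod_{a}\rho_{a(m/f)}(\lambda_\m)\ \ \bmod\ (\text{min.\ poly.\ of }\lambda_\m),$$
the product being independent of the choice of lifts because $\lambda_\f$ is $\f$-torsion. Each factor costs one Drinfeld action and one reduction; there are at most $q^{\deg\m}$ factors and at most $q^{\deg\m}$ divisors $\f$, so the whole list $(\lambda_{\f,F})_{\f\mid\m}$ is produced in time polynomial in $q^{\deg\m}$ and the size of $X_F$. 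Each $\lambda_{\f,F}$ lies in $F_\f\subseteq F$; to present it in $F=\F_q(t)[y]/(X_F(y))$ rather than as a polynomial in $\lambda_\m$, solve the $k$-linear system expressing it in the power basis $1,y,\dots,y^{\deg X_F-1}$ via the embedding above, again at polynomial cost.

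\emph{Main obstacle.} The only step with real content is the middle one: one must correctly identify the decomposition of the abelian group $(\cO_k/\m)^\times$ so that restriction carries the stabiliser of $F$ onto the stabiliser of $F_\f=K_\f\cap F$, and one must keep the torsion generators $\lambda_\f=\rho_{m/f}(\lambda_\m)$ compatible across the divisors of $\m$ so that the computed $\lambda_{\f,F}$ agree with the definition up to the (harmless) ambiguity in the choice of $\cO_k$-generator of $\Lambda_\rho[\f]$. This relies only on the standard Galois correspondence for cyclotomic function fields together with the facts, recalled in the excerpt, that $\infty$ splits in $H_\m$ (so $F\subseteq H_\m\subseteq K_\m$ and all the fields in play are genuinely nested) and that $K_\f=k(\lambda_\f)$ and $H_\f=k(-\lambda_\f^{q-1})$ are explicitly constructible; everything else is bookkeeping with Drinfeld actions and polynomial-time linear algebra over $\F_q(t)$.
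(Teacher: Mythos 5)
Your argument is correct, but it takes a genuinely different route from the paper's. The paper constructs each $\lambda_\f$ independently via Lemma~\ref{lambda_lemma}, builds $F_\f = H_\f \cap F$ explicitly as a subfield of $H_\m$, and then reads off $\lambda_{\f,F}$ as (a constant term of) a factor of the minimal polynomial of $-\lambda_\f^{q-1}$, obtained by factoring that polynomial over $F_\f$. You instead work entirely inside $K_\m$: you take $\lambda_\f := \rho_{m/f}(\lambda_\m)$ so that all the torsion generators are produced by a single call to Lemma~\ref{lambda_lemma}, you compute the decomposition group $U_F = \mathrm{Gal}(K_\m/F)$ by brute force over $(\cO_k/\m)^\times$, and you use the second isomorphism theorem to project $U_F$ onto $\mathrm{Gal}(K_\f/F_\f)$ and evaluate the norm as a product of Drinfeld conjugates $\prod_a \rho_{a(m/f)}(\lambda_\m)$. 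This is sound: for $k=\F_q(t)$ with a degree-one $\infty$, $K_\e=k$ and $\mathrm{Gal}(K_\m/k)\cong(\cO_k/\m)^\times$, and the identification of $\mathrm{Gal}(K_\f/F_\f)$ with the reduction of $U_F$ modulo $\f$ is the correct Galois-theoretic reading of $F_\f = K_\f\cap F$. Your method has the modest advantage that the $\lambda_\f$ are compatible across all divisors of $\m$ by construction (so the only ambiguity is the global $\F_q^\times$-choice of $\lambda_\m$), and it replaces the paper's factorization of $X_\f$ over the intermediate field $F_\f$ by purely group-theoretic bookkeeping plus Drinfeld evaluations in $K_\m$. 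The paper's method avoids the explicit Galois-group enumeration and instead reuses the factorization machinery already needed to embed $F$ into $H_\m$. Both accept the output only up to a $\mathrm{Gal}(F_\f/k)$-conjugate, and both stay within time polynomial in $q^{\deg\m}$ and the size of $X_F$, so neither gains asymptotically. One point worth being explicit about: when you change $\lambda_\f$ by a unit $a\bmod\f$, the commutativity of $\mathrm{Gal}(K_\f/k)$ gives $\mathcal{N}_{K_\f/F_\f}(\sigma_a\lambda_\f)=\sigma_a\big(\mathcal{N}_{K_\f/F_\f}(\lambda_\f)\big)$, which is exactly the $\mathrm{Gal}(F_\f/k)$-conjugacy ambiguity the lemma tolerates; it is good that you flag this, as it is the one place where a reader might worry that your $\lambda_\f$ and the $\xi(\f)e_\f(1)$ of the definition could disagree by more than an allowed conjugate.
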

\begin{proof} We first obtain the factorization of $\m$ into prime power ideals in $\F_q[t]$ using Berlekamp's deterministic polynomial factorization algorithm over finite fields \cite{ber}.\\ \\
For each $\f$ dividing $\m$, let $X_{\f,F}(y) \in F_\f[y]$ denote the minimal polynomial of $-\lambda_\f^{q-1}$ over $F_f$ and let $X_\f(y) \in k[y]$ denote the minimal polynomial of $-\lambda_\f^{q-1}$ over $k$.\\ \\
%Since $F_f = H_\f \cap F$, $X_{\f,F}(y) \in F_\f$ is the minimal polynomial of $-\lambda_\f^{q-1}$ over $F$.
For each $\f$ dividing $\m$, factor $X_\f(y)$ over $F_\f$ thereby obtaining the factorization
$$X_\f(y)  = \prod_{\theta \in Gal(F_\f/\F_q(t))} (X_{\f,F}(y))^\theta$$ where $(X_{\f,F}(y))^\theta$ denotes $X_{\f,F}$ with its coefficients acted on by $\theta$. We can read off $\lambda_{\f,F} = N_{H_\f/F_\f}(-\lambda_\f^{q-1})$ as $X_{\f,F}(0)$ up to a $Gal(F_\f/\F_q(t))$ conjugate. 
\end{proof}
\subsection{Computation of the Conductor}\label{conductor}
In this subsection, we sketch how to compute the conductor of $F$ given an irreducible polynomial $X_F(y) \in \F_q(t)[y]$ such that $F = \F_q(t)[y]/(X_F(y))$.\\ \\
First we compute the discriminant $\mathfrak{d}(X_F) \subseteq F_q[t]$ of the minimal polynomial $X_F$ and then find the set of all prime ideals in $\F_q[t]$ dividing $\mathfrak{d}(X_F)$. The running time is polynomial in $q$ and the size of $X_F$. Since $\m$ divides the discriminant of $F$ which in turn divides $\mathfrak{d}(X_F)$, the set of prime ideals dividing $\mathfrak{d}(X_F)$ contains the set of prime ideals dividing $\m$. To compute $\m$, for each prime $\q$ dividing $\mathfrak{d}(X_F)$, we have to determine the highest non negative integer $e$ such that $\q^e$ divides $\m$.\\ \\ 
Fix a prime ideal $\q$ dividing $\mathfrak{d}(X_F)$ and let $e$ denote the highest non negative integer such that $\q^e$ divides $\m$.\\ \\
Since $\q^e$ and $\m/\q^e$ are relatively prime, $H_\m$ is the composite  $H_{\m/\q^e} H_{\q^e}$ \cite{hay_exp}.\\ \\
Let $F_\q$ denote the localization of $F$ at a prime above $\q$ and let $k_\q$ denote the localization of $k$ at $\q$. For a positive integer $i$, let $H_{\q,i}$ denote the localization of $H_{\q^i}$ at a prime above $\q$ and let $k_\q^i$ denote the unique unramified extension of $k_\q$ of degree $i$.\\ \\
Since $F \subseteq H_\m = H_{\m/\q^e} H_{\q^e}$ and the localization of $H_{\m/\q^e}$ at a prime above $\q$ is unramified and of degree at most $[H_{\m/\q^e}:k]$, there exists a positive integer $i \leq [H_{\m/\q^e}:k]$ such that $F_\q \subseteq k_\q^i H_{\q,e}$. Denote by $n$ the smallest positive integer $i$ such that $F_\q \subseteq k_\q^i H_{\q,e}$. Further, there exists a positive integer $i$ such that $F_\q \subset k_\q^i H_{\q,j}$ if and only if $j \geq e$.\\ \\ 
This leads to the following algorithm to determine $e$. Start with $i=0$. If $F_\q \subseteq k_\q^i H_{\q,i}$ then output $i$ and terminate. Otherwise increment $i$.\\ \\
The algorithm terminates with the output $i_{out}$ that is at least $e$ and at most $\max(n,e)$. Given $i_{out}$, we can determine $e$ as the smallest non negative integer $j$ such that $F_\q \subseteq k_\q^{i_{out}} H_{\q,j}$. Since $\max(n,e)$ is bounded by $q^{\deg(\m)}$, we can determine $e$ with the number of trials bounded by a polynomial in $q^{\deg(\m)}$.\\ \\
\subsection{Computation of the Divisor Class Number: Proof of Theorem \ref{ray_class_theorem}}\label{divisor_class_number}
For an irreducible complex character $\vartheta$ of $G$ and $s \in \mathbb{C}$ with $\Re(s)>1$, let $$L_k(s,\vartheta):=\prod_{\mathfrak{b} \notin P_F(\vartheta)}{(1-\vartheta((F/k, \mathfrak{b})) (\mathcal{N}(\mathfrak{b}))^{-s})^{-1}}$$ be the Artin $L$-function attached to $\vartheta$. The product is over $P_F(\vartheta)$, the set of all places in $k$ not dividing the conductor of the character $\vartheta$. The analytic class number formula states that $$h(F) = h(k) \prod_{1 \neq \vartheta \in \hat{G}} L_k(0,\vartheta)$$ where the product is over the group of irreducible complex characters excluding the principal character.\\ \\
For $F$ of conductor $\m$, we next describe how the L-function evaluations $L_k(0,\vartheta)$ can be derived from the Stark unit $\lambda_{m,F}$.\\ \\
Associated with a Galois extension $L/k$ is the logarithm map $$\log_L : L \longrightarrow \Q[Gal(L/k)]$$ $$\ \ \ \ \ \ \ \ \ \ \ \ \ \ \ \ \ \ \ \ \ \ \ \ \ \ h \longmapsto \sum_{\sigma \in Gal(L/k)} \mathcal{V}_{\infty(L)}(h^\sigma)\sigma^{-1}.$$
Here $\mathcal{V}_{\infty(L)}$ denotes the valuation at a place $\infty(L)$ in $L$ above $\infty$. By \cite[\S~6]{hay}, the Stickelberger element $\Theta_{H_\f}$ can be computed from the image of $-\lambda_\f^{q-1}$ under the logarithm map as $$\Theta_{H_\f} = \frac{1}{q-1} \log_{H_\f}(-\lambda_\f^{q-1})$$
For a finte abelian extension $L/F$, by \cite[Lem 2.3] {ahn} $$\log_{F}(\N_{L/F}(z)) = Res_{L/F}(\log_{L}(z)), \forall z \in L.$$
Since the places that ramify in $H_\m/k$ and the places that ramify in $F/k$ have the same support, $\Theta_F = Res_{H_\m/F}(\Theta_{H_\m})$ and  $$\Theta_{F} = \frac{1}{q-1} \log_{F}(\lambda_{\m,F}).$$
For a non principal $\vartheta \in \hat{G}$ linearly extended to $\Z[G]$, $$L_{F/k}(0,\bar{\vartheta}) = \frac{1}{q-1} \vartheta(\Theta_F) = \vartheta \left(\log_{F}(\lambda_{\m,F})\right)$$
$$\Rightarrow L_{F/k}(0,\vartheta)  = \bar{\vartheta} \left(\log_{F}(\lambda_{\m,F})\right). $$
To compute the divisor class number $h(F)$ using the analytic class number formula, we require $L_k(0,\vartheta)$. But the $\bar{\vartheta}$ component of the Stark unit $\lambda_{m,F}$ under the logarithm map only yields $L_{F/k}(0,\vartheta)$. However, $L_k(0,\vartheta)$ and $L_{F/k}(0,\vartheta)$ are off only by finitely many Euler factors.\\ \\
In particular, the set of places that divide the conductor of $\vartheta$ is the set of places that are ramified in the extension $F/F_{\vartheta}$ where $F_{\vartheta}$ is the fixed field of $\vartheta$ in $F$. Thus $P_F \setminus P_F(\vartheta)$ consists of the places that ramify in $F/k$ but not in $F/F_{\vartheta}$. Hence for all non principal $\vartheta$, $$ L_k(0, \vartheta)  = L_{F/k}(0,\vartheta) \prod_{\mathfrak{b} \in P_F \setminus P_F(\vartheta)} (1-\vartheta(F/F_{\vartheta}, \mathfrak{b}))^{-1}$$
Hence the divisor class number can be derived from the Stark unit $\lambda_{\m,F}$ as $$h(F) = h(k) \prod_{1 \neq \vartheta \in \hat{G}}   \left( \bar{\vartheta} \left(\log_{F}(\lambda_{\m,F})\right) \prod_{\mathfrak{b} \in P_F \setminus P_F(\vartheta)} (1-\vartheta(F/F_{\vartheta}, \mathfrak{b}))^{-1} \right)$$
Given $\lambda_{\m,F}$, the running time of the algorithm to compute $h(F)$ is bounded by a polynomial in $q$ and the size of $X_F$. From lemma \ref{lambdaF_lemma}, $\lambda_{\m,F}$ can be computed in time polynomial in $q^{\deg(\m)}$ and theorem \ref{ray_class_theorem} follows.
%The running time of the algorithm is dominated by the computation of $\lambda_{\m,F}$ which from lemma \ref{lambdaF_lemma} takes time polynomial in $q^{\deg(\m)}$ time and theorem \ref{ray_class_theorem} follows. 
\subsection{The Stickelberger Ideal: Proof of Theorem \ref{stickelberger_generating_set}}\label{stickelberger}
In \cite{yin}, Yin defined an ideal $I_F$ in $\Z[G]$ that annihilates $\cl_F^0$. In addition, an index formula was derived that shows that $[\Z[G]:I_{K_\m}]$ is up to a computable power of $(q-1)$ the divisor class number $h(K_\m)$. In \cite{ahn}, the index theorem was extended to hold for all abelian extensions $F/k$ that we consider.\\ \\
Following \cite{yin}, we define $Q_F$, a $G$ submodule of $\mathbb{Q}[G]$ such that $I_F= Q_F \cap \mathbb{Z}[G]$. The module $Q_F$ is comprised of a ramified part $Q_F^{ra}$ and an unramified part $Q_F^{ur}$.\\ \\
For an abelian extension $L/k$ and a subextension $\bar{L}/k$, let $Res_{L/\bar{L}}$ be the linear extension to $\Q[Gal(L/k)]$-modules of the restriction map from $Gal(L/k)$ to $Gal(\bar{L}/k)$. Likewise, let $Cor_{L/\bar{L}}$ be linear extension to $\Q[Gal(\bar{L}/k)]$-modules of the corestriction map from $Gal(\bar{L}/k)$ to $Gal(L/k)$.\\ \\
For $S \subseteq P_F$, let $F_S$ be the maximial subextension of $F$ where the primes outside $S$ are unramified. The ramified part $Q_F^{ra}$ is defined to be the $\Z[G]$-module generated by $\{Cor_{F/F_S}(\Theta_{F_S})\}_{S \subseteq P_F}$, the Stickelberger elements of the maximal subextensions $F_S$ under the conorm map \cite[\S~2]{yin}. For $F \subseteq H_\m$, $Q_F^{ra}$ is generated by $\{Cor_{F/F_\f^+} Res_{H_\f/F_\f^+} (\Theta_{H_\f}) \}_{\f\mid \m}$, where $\f$ ranges over all non trivial prime ideals dividing $\m$ \cite[\S~4]{ahn}.\\ \\
The unramified part $Q_F^{un}$ is the $\Z[G]$-module generated by $\{Cor_{F/F_\e} Res_{H_\g/F_\e}(\Theta_{H_\g}) \}_\g$ and $\frac{1}{q-1}\sum_{\sigma \in G} \sigma$ where $\g$ ranges over all prime ideals of $\cO_k$.\\ \\
Let $R$ be a finite set of ideals of $\cO_k$ such that $R$ is mapped surjectively onto $Gal(K_\e/k)$ under the Artin map which takes $\ma \in R$ to $(\ma,K_\e/k)$. We further require that for every $\ma \in R$, $(\ma,K_\e/k)$ is not the identity. Let $M=\{f \in k^\times: sgn(f)=1\}$ be the multiplicative group of positive functions in $k$. Let $P$ be the subgroup of $K_\e^\times$ generated by $\{\xi(\cO_k)/\xi(\g)\}_\g$ where $g$ ranges over all ideals of $\cO_k$. Let $\bar{P}$ be the subgroup of $P$ generated by $\{\xi(\cO_k)/\xi(\h)\}_{\h \in R}$. Hayes \cite[Equation 1.9]{hay_elliptic} proved that $P$ is the direct product of $\bar{P}$ and $M$. Further $P \cap k^\times=M$ \cite[Cor 2.5]{hay_elliptic}. Hence for every $\eta \in P$, there exists integers $b_\ma$ and a $\gamma \in M \subset k^\times$ such that $$\eta = \gamma \prod_{\ma \in R}\left( \xi(\cO_k)/\xi(\ma)\right)^{b_\ma}$$
Further, $\lambda_{\g,F} = \N_{K_\e/F_\e}(\xi(\cO_k)/\xi(\g))$ for all ideals $\g$ \cite{hay_elliptic}\cite[\S~3]{ahn}. For every ideal $\ma$ such that $(\ma,K_\e/k) \in Gal(K_\e/F_\e)$, $\N_{K_\e/F_\e}(\xi(\cO_k)/\xi(\g)) \in M \subset k^\times$ \cite[Lem 3.4]{ahn}. \\ \\
For $\gamma \in M \subset k^\times$, $\log_{F_\g}(\gamma) = c \sum_{\sigma \in G} \sigma$ for some integer $c$. Hence for every ideal $\g$, there exists integers $c_{\ma}$ and $c_0$ such that $$\log_{F_\g}(\lambda_{\g,F}) = c_0 \sum_{\sigma \in G} \sigma+ \sum_{\ma \in R} c_{\ma} \log_{F_{\ma}}(\lambda_{\ma,F})$$
Hence $Q_F$ is generated as a $\Z[G]$-module by $\{Cor_{F/F_\e} Res_{F_\g/F_\e}(\log_{F_\g}(\lambda_{\g,F})) \}_{\g \in R}$, $\frac{1}{q-1}\sum_{\sigma \in G} \sigma$ and $\{Cor_{F/F_\f^+} Res_{F_\f/F_\f^+} (\log_{F_\f}(\lambda_{\f,F})) \}_{f \mid \m}$.\\ \\
Let $C:=\{\g \subset \cO_k; \deg(\g) \leq 2 \log_q([K_\e:k])\}$ be a set of prime ideals. From Chebotarev's density theorem \cite{mur}, $C$ is mapped surjectively on to $Gal(K_\e/k)$ under the Artin map. Compute $\lambda_{\f,F}$ for every $\f$ that either divides $\m$ or is in $C$ and write down a generating set $M_F$ for $Q_F$ as a $\Z[G]$-module. Let $\Theta = \frac{1}{q-1}\sum_{\sigma \in G} \sigma$. Further, $Q_F=I_F+\mathbb{Z}\Theta$ \cite{ahn}. For every $s \in M_F$, define $i_s:=s - z_s \Theta$ where $z_s$ is the unique integer such that $0 \leq z_s < q-1$ and $i_s \in I_F$. Clearly $Z_F:=\{i_s\}_{s \in M_F} \cup  \{(q-1)\Theta\}$ generates $I_F$ as a $\mathbb{Z}[G]$-module.\\ \\
For $k=\F_q(t)$, given $\lambda_{\f,F}$ for $\f$ dividing $\m$, the running time of the algorithm to compute $M_F$ is bounded by a polynomial in $q$ and the size of $X_F$. From lemma \ref{lambdaF_lemma}, $\lambda_{\f,F}$ for $\f$ dividing $\m$ can be computed in time polynomial in $q^{\deg(\m)}$ and theorem \ref{stickelberger_generating_set} follows.
\subsection{Stickelberger Ideal and Structure of the Divisor Class Group: Proof of Theorem \ref{divisor_class_structure}}\label{stickelberger_structure}
In this section, we show that if the divisor class group $\mathcal{C}l^0_F$ is a cyclic $\Z[G]$ submodule, then the structure of $\mathcal{C}l^0_F$ is determined by the Stickelberger ideal $I_F$ upto. This leads to an algorithm to compute the structure of $\cl^0_F$ as an abelian group. Further, given a $\Z[G]$ generator, we can compute the invariant decomposition of $\cl^0_F$.\\ \\   
The index $[\Z[G] : I_F]$ of the Stickelberger ideal was computed in \cite{ahn} as $h(F)/h(k)$ up to a factor that is supported over primes dividing $[F:k]$. A precise statement of their index theorem follows.\\ \\
Let $T_{\q,F} \mathrel{\leq} G$ be the inertia group of $\q$ and $\sigma_{\q,F}:= (\q,F/k)^{-1} \left(\sum_{\tau \in T_{\q,F}} \tau\right)/(|I_{\q,F}|)$. For an ideal $\f$, define $$\alpha_{\f,F} : = \left(\sum_{\sigma \in Gal(F/F_f)} \sigma\right) \prod_{\q \mid \f} (1-\sigma_{\q,F}).$$ Let $e^+_{F} = \sum_{\tau \in G} \tau$. Let $V_F$ be the $\Q[G]$ module generated by $\{\alpha_{\f,F}\}_{\f \mid \m}$ and let $U_F = V_F + (\sum_{\tau \in Gal(F/F_\e)}\tau) \Z[G]$. Ahn, Bae, Jung proved that \cite[Thm 4.11]{ahn} $$[\Z[G]:I_F] = \frac{h(F)[F_\e:k](\Z[G]: U_{F})}{h(k)}$$ 
Further, the set of prime divisors of $(\Z[G]: U_{F})$ is contained in the set of prime divisors of $[F:F_\e]$. Hence $$[\Z[G]:I_F] = \frac{h(F)B}{h(k)}$$ for some $B$ whose prime divisors are contained in the primes dividing $[F:k]$.\\ \\ 
Let $r$ be the largest factor of $[\Z[G]:I_F]$ that is relatively prime to $h(k)[F:k]$. From the index theorem, it follows that the largest factor of $h(F)$ that is relatively prime to $h(K)[F:k]$ is $r$. Let $s_1 = [\Z[G]:I_F]/r$ and $s_2 = h(F)/r$. For an abelian group $K$, let $K[n]$ denote its $n$-torsion. Then,
$$\cl^0_F = \cl^0_F[r] \oplus \cl^0_F [s_2].$$\\
Assume that $\cl^0_F[r]$ is a cyclic $\Z[G]$ module. If $\gamma$ generates $\cl^0_F[r]$ as a $\Z[G]$ module and $J \subseteq \Z[G]$ denotes the annihilator of $\gamma$, then $$\Z[G]/J \cong \cl^0_F[r] = \Z[G]\gamma.$$ 
Since $I_F$ annihilates $\cl^0_F$, $I_F \subseteq J$ and there is a natural surjection $$\Z[G]/I_F \twoheadrightarrow \Z[G]/J$$ which implies that there is a surjection $$\phi : (\Z[G]/I_F)[r] \oplus (\Z[G]/I_F)[s_1] \twoheadrightarrow \cl^0_F[r]. $$
Since $r$ and $s_1$ are coprime, $(\Z[G]/I_F)[s_1]$ is in the kernel of $\phi$ and the restriction of $\phi$ to $(\Z[G]/I_F)[r]$ is surjective.\\ \\
Since $\left|(\Z[G]/I_F)[r]\right| = \left|\cl^0_F[r]\right| = r,$ the restriction of $\phi$ to $(\Z[G]/I_F)[r]$ is a $\Z[G]$ module isomorphism.\\ \\
Since $(\Z[G]/I_F)[r]$ and $\cl^0_F[r]$ are isomorphic as $\Z[G]$ modules, they are isomorphic as groups.\\ \\
Recall that $Z_F$ is a finite set that generates $I_F$ as a $\Z[G]$ module. Thus $\bar{Z}_F : = \{\sigma(z) | \sigma \in G, z \in Z_F\}$ generates $I_F$ as a $\Z$-module and we can determine the structure of $(\Z[G]/I_F)[r]$. Further, given $\gamma$, we can compute the invariant factor decomposition of $\cl^0_F[r]$ by a Smith normal form computation. Further, the Smith normal form computation yields a unimodular projection matrix that allows us to efficiently project a divisor class written as $\sum_{\sigma \in G} a_\sigma \gamma^\sigma$ for $a_\sigma \in \Z$ into the the invariant factor decomposition of $\cl^0_F[r]$.\\ \\
We next turn our attention to $\cl^0_F[s_2]$.\\ \\
Let $\ell$ be a prime dividing $s_2$ and let $(\cl^0_F)_\ell$ be the $\ell$-primary component of $\cl^0_F$. Let $\ell^{b_\ell}$ be the exponent of $(\cl^0_F)_\ell$.\\ \\
If $A$ is a cyclic subgroup of $(\cl^0_F)_\ell$ of order $\ell^{b^\ell}$, then $(\cl^0_F)_\ell$ is the direct sum of $A$ and its complement. Thus, if we can find a cyclic subgroup A of $(\cl^0_F)_\ell$ of order $\ell^{b_\ell}$, then the problem of computing the structure of $(\cl^0_F)_\ell$ is reduced to computing the structure of the complement of $A$ and we can proceed inductively.\\ \\
There is a set $A_F$ of degree zero divisors of size polynomial in $[F:k]$ and $\log q$ whose divisor classes generate $\cl^0_F$ \cite{hess}[Theorem 34]. Further, $A_F$ consists of divisors of pole degree bounded by $\mathcal{O}(\log [F:k])$.\\ \\
Thus, $$(\cl^0_F)_\ell = \left\langle \left\{\frac{h(F)}{| (\cl^0_F)_\ell | } \bar{H},  H \in A \right\}\right\rangle$$ and
at least one of the elements in the above generating set of $(\cl^0_F)_\ell$ has order $\ell^{b_\ell}$. Further, given a degree zero divisor with pole degree $\delta$ and an integer $a$, we can test if it is principal using a Riemann-Roch computation in time polynomial in $[F:k]$, $\log q$, $\delta$ and $\log(\ell^a)$ \cite{hesrr}. Hence we can find the element of maximal order in the generating set and the subgroup generated by it would be the $A$ that we seek.\\ \\
By computing the structure of $(\cl^0_F)_\ell$ for every $\ell$ dividing $s$, we determine $(\cl^0_F)[s_2]$.\\ \\
Thus we can obtain the invariant decomposition $\cl^0_F$ of the form $$\mathcal{C}l^0_F= \langle e(1) \gamma \rangle \oplus \langle e(2)\gamma \rangle \oplus \ldots \oplus \langle e([F:k])\gamma \rangle $$%\cong \Z/d_1\Z \oplus \Z/d_2\Z \oplus \ldots \oplus \Z/d_{[F:k]}\Z$$
where for $1\leq i \leq [F:k]$, $e(i) \in \Z[G]$ and $d_i$ the order of $e(i)\gamma$ in $\cl^0_F$ and for $1 \leq i < [F:k]$, $d_i \mid d_{i+1}$. Thus theorem \ref{divisor_class_structure} follows.\\ \\
Given two degree zero divisors $D_1,D_2$, the discrete logarithm problem in $\mathcal{C}l^0_F$ is to compute an integer $x$ such that $\bar{D_1} \sim x \bar{D_2}$ if it exists. The discrete logarithm problem over $\mathcal{C}l^0_F$ is believed to be hard. There are several cryptosystems whose security is reliant on the hardness of solving the discrete logarithm problem, in particular when $F$ is the function field of an elliptic curve.\\ \\
Assume that $D$ is a degree zero divisor that generates $\cl^0_F$ and that $D$ has $[F:k]$ distinct conjugates. The above decomposition allows us to project an degree zero divisor in $\Z[G]D$ in to the invariant decomposition of $\cl^0_F$. This reduces the discrete logarithm problem between two divisors in $\Z[G]D$ to inversion in $\Z/d_1\Z \oplus \Z/d_2\Z \oplus \ldots \oplus \Z/d_{r}\Z$ which can be solved efficiently using the extended Euclidean algorithm.
\section{Euler Systems from Stark Units}\label{euler}
Let $\ell$ be a prime number not dividing $q(q^{d_\infty}-1)[F:k]$ and  $N$ a power of $\ell$. Let $\ell^a$ be the cardinality of $\p_\ell(\cO_F)$. Fix a finite set $\{\h_1,\h_2,\ldots,\h_s\}$ of ideals of $\cO_k$ such that $\p_\ell(\cO_k)$, the $\ell$-primary part of $\p(\cO_k)$ decomposes as
$$\p_\ell(\cO_k)= \langle \hat{\h_1}\rangle \times \langle \hat{\h_2}\rangle \times \ldots \times \langle \hat{\h_s}\rangle$$
where for $1\leq i\leq s$, $\hat{\h_i}$ is the class of $\h_i$ in $\p(\cO_k)$. For $1\leq i \leq s$, let $n_i$ be the order of $\langle \hat{\h_1}\rangle$. Fix a $h_i \in k$ such that $(\h_i)^{n_i}=h_i\cO_k$.
%If $\mu_\ell \subset k$, let $F_N := F((\F_q^\times)^{\frac{1}{N}})$. Otherwise, let $F_N:=F(\mu_N)$.
Let $R_N$ be the set of prime ideals of $\cO_k$ that split completely in the extension $F^\prime:=F(\mu_N,h_1^{1/N},h_2^{1/N},\ldots,h_s^{1/N})/k$. For every $\pe \in R_N$, there exists a cyclic extension $F(\pe)/F$ of degree $[F(\pe):F]=N$ such that $F(\pe) \subset FH_\pe$, $F(\pe)/F$ is unramified outside $\pe$ and the primes in $F$ above $\pe$ are totally ramified in $F(\pe)/F$ \cite[Lem 3.1]{ouk}. Fix a $\sigma_\pe$ such that $\langle \sigma_\pe \rangle = Gal(F(\pe)/F)$.\\ \\
Let $B_N$ be the set of square free products of ideals in $R_N$. For an $\ma = \pe_1\pe_2 \ldots \pe_b \in B_N$ with $\pe_1,\pe_2,\ldots ,\pe_b \in R_N$, let $F(\ma)$ denote the compositum $F(\pe_1)F(\pe_2)\ldots F(\pe_b)$. For the unit ideal $\e= \cO_k$, let $F(\e):=F$.\\ \\
%An Euler system of modulus is a collection of cohomology classes \\ \\
An Euler system of modulus $N$ is a function $\Psi : B_N \longrightarrow k_\infty^\times$ such that $\forall \ma \in B_N$ and $\forall \pe \in R_N$,
\begin{enumerate}
\item  $\Psi(\ma) \in F(\ma)^\times$
\item  If $\ma \neq \e$, then $\Psi(\ma) \in \cO_{F(\ma)}^\times$
\item  $\N_{F(\ma\pe)/F(\ma)}(\Psi(\ma\pe))= \Psi(\ma)^{1-(\pe,F(\ma)/k)^{-1}}$	
\item  $\Psi(\ma\pe)=\Psi(\ma)^{(\pe,F(\ma)/k)^{-1} (\N(\pe)-1)/N}$ modulo every prime in $F(\ma\pe)$ above $\pe$.
\end{enumerate}
Oukhaba and Vigue \cite[\S~3]{ouk} proved that for every non zero coprime ideals $\f,\g  \subset \cO_k$, $$\Psi_{\f,\g}(\ma):=\mathcal{N}_{H_{\f\ma}/F(\ma)}(\lambda_{\f\ma}^{\mathcal{N}(\g)-(\g,K_{\f\ma}/k)})$$
is an Euler system such that $\Psi_{\f,\g}(\e)=\mathcal{N}_{H_\f/H_\f \cap F}(\lambda_\m^{\mathcal{N}(\g)-(\g,K_\f/k)})$.\\ \\
Since $\{\mathcal{N}_{H_\f/H_\f \cap F}(\lambda_\m^{\mathcal{N}(\g)-(\g,K_\m/k)})\}_{\f,\g}$ generates $S_F$ up to roots of unity and the product of two Euler systems is an Euler system, for every $\alpha \in S_F$, there exists an Euler system $\Psi$ such that $\Psi (\e)=\alpha$ \cite[Cor 3.16]{ouk}. If $\Psi (\e) =\alpha$, we call $\Psi$ an Euler system starting from $\alpha$.
\subsection{Kolyvagin Systems of Derivative Classes}\label{koly}
From an Euler system $\Psi$, a collection of functions $\ka(\ma) \in F^\times$ indexed by $\ma \in B_N$ is derived. The places that appear in the divisor $[\ka(\ma)]$ admit a precise characterization up to an $N^{th}$ multiple due to the properties of Euler systems.\\ \\
For $\pe \in R_N$, let $D_\pe:= \sum_{i=0}^{N-1}i\sigma_\pe$. For an $\ma \in B_N$, let $D_\ma:=\prod_{\pe/\ma}D_\pe$ where the product is over prime ideals $\pe$ dividing $\ma$.\\ \\
For every $\sigma \in Gal(F(\ma)/F)$ and every prime $\pe$ dividing $\ma$, the class of $\Psi(\pe)^{(\sigma-1)D_\ma}$ in $F(\ma)^\times/(F(\ma)^\times)^N$ is fixed by $Gal(F(\ma)/F)$ \cite[Lem~4.1]{ouk}.\\ \\
The $N^{th}$ roots of unity are trivial in $F(\ma)$ and the $1$-cocycle $C_{\ma}:Gal(F(\ma)/F) \longrightarrow F(\ma)^\times$ that takes $\sigma$ to $\Psi(\ma)^{(\sigma-1)D_\ma}$ is well defined.
%$$ C_{\ma}:Gal(F(\ma)/F) \longrightarrow F(\ma)^\times$$
%$$\ \ \ \ \ \ \ \ \ \ \ \ \ \ \ \ \ \ \ \ \ \ \ \ \ \ \ \ \sigma \longmapsto \Psi(\pe)^{(\sigma-1)D_\ma}$$
Hilbert's theorem 90 implies that there exists a $\beta \in F(\ma)^\times$ such that $C_{\ma}(\sigma)=\beta^{\sigma-1}$ for all $\sigma \in Gal(F(\ma)/F)$. Set $\ka(\ma) := \frac{\Psi(\ma)^{D_\ma}}{\beta^N}$.
In particular, set \begin{equation}\label{kolyvagin_equation}\frac{1}{\beta}:=\sum_{\sigma \in Gal(F(\ma)/F)} C_{\ma}(\sigma)\sigma(e)=\sum_{\sigma \in Gal(F(\ma)/F)}\left(\Psi(\ma)^{(\sigma-1)D_\ma}\right)^{\frac{1}{N}} \sigma(e)\end{equation}
Here $e \in F(\ma)^\times$ is picked such that the term on the right does not vanish. Independence of characters implies the existence of such an $e$. For instance, $e=\lambda_{\ma,F(\ma)}$ assures that the term does not vanish. Set $e=\lambda_{\ma,F(\ma)}$ and $\ka(\ma) := \frac{\Psi(\ma)^{D_\ma}}{\beta^N}$.\\ \\
For every $\sigma \in Gal(F(\ma)/F)$, $$\left(\beta^{(\sigma-1)}\right)^N=\Psi(\ma)^{(\sigma-1)D\ma} \Rightarrow \left( \frac{\Psi(\ma)^{D_\ma}}{\beta^N}\right)^{(\sigma-1)}=1 \Rightarrow \ka(\ma) \in F^\times$$
Further, $\ka(\ma) = \Psi(\ma)^{D_\ma}$ modulo $(F^\times)^N$.\\ \\
Let $I$ be the group of fractional ideals of $\cO_F$ written additively as a subgroup of the group of divisors of $F$. For a prime ideal $\pe$ of $\cO_k$, let $I_\pe$ be the subgroup of $I$ supported at places in $F$ above $\pe$. For $f\in F^\times$, let $[f]_\pe \in I_\pe$ be the projection of $f\cO_F$ in $I_\pe$.\\ \\
From \cite[Prop4.3]{ouk}, for every $\pe \in R_N$, there exists a $G$-equivariant map
\[ \varphi_\pe : (\cO_F/\pe)^\times/((\cO_F/\pe)^\times)^N \rightarrow I_\pe/N I_\pe\]
unique up to a multiple of $(\Z/N\Z)^*$, that makes the following diagram commute.\\ \\
\begin{tikzpicture}[description/.style={fill=white,inner sep=2pt}]
\matrix (m) [matrix of math nodes, row sep=3em,
column sep=1.5em, text height=1.5ex, text depth=0.25ex]
{ & & & & & {F(\pe)^\times} & & & & &\\
 & & & {(\cO_F/\pe)^\times/((\cO_F/\pe)^\times)^N} & & & & {I_\pe/N I_\pe} & & &\\ };
\path[->,font=\scriptsize]
(m-1-6) edge node[description] {$x\rightarrow (x^{(1-\sigma_\pe)})^{\frac{1}{d}} $} (m-2-4)
(m-1-6) edge node[description] {$x\rightarrow [\N_{F(\pe)/F}(x)]_\pe $} (m-2-8)
%edge node[description] {$ \Psi $} (m-2-9)
(m-2-4) edge node[auto] {$\varphi_\pe$} (m-2-8);
\end{tikzpicture}\\
where $d = \frac{\N(\pe) -1 }{N}$.
Let $\pi \in \mathfrak{P} \setminus \mathfrak{P}^2$ where $\mathfrak{P}$ is a prime ideal in $F(\pe)$ above $\pe$. Let $\mathfrak{B} : = \mathfrak{P} \cap \mathcal{O}_F$.\\ \\
 Let $Gal(F(\pe)/F) = \langle \sigma_{\pe} \rangle$.  Then the image of  $\pi^{1-\sigma_{\pe}}$ is of order $N$ in
 $(\mathcal{O}_{F(\pe)}/\mathfrak{P})^\times \cong (\mathcal{O}_{F}/\mathfrak{B})^\times$, and it is independent of the choice of $\pi$.  We will denote this image by $\bar{\pi}_{\mathfrak{B}}$.\\ \\
The unique $G$-equivariant map $\varphi_{\pe}$ that makes the above diagram commute takes $$\bigoplus_{\mathfrak{B} \mid \pe} \bar{\pi}_{\mathfrak{B}}^{b_{\mathfrak{B}}} \longmapsto \sum_{\mathfrak{B} \mid \pe} b_{\mathfrak{B}} \mathfrak{B}.$$

\ \\The following lemma relates Kolyvagin derivative classes through the $\varphi_{\pe}$ map.
\begin{lemma}\label{kolyvagin_projection} (\cite[Lem~4.4]{ouk}) For $\pe \in B_N$, if $\pe \nmid \ma$ then $[\ka(\ma)]_\pe = 0 \mod N$ and if $\pe \mid \ma$ then $[\ka(\ma)]_\pe = \varphi_{\pe} (\ka(\ma/\pe)) \mod N$.
\end{lemma}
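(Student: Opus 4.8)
The plan is to analyze the divisor $[\ka(\ma)]$ place by place, localizing at primes $\pe \in B_N$, and to use the defining relation $\ka(\ma) \equiv \Psi(\ma)^{D_\ma} \pmod{(F^\times)^N}$ together with the Euler system axioms. Since we only care about the divisor modulo $N$, we may freely replace $\ka(\ma)$ by $\Psi(\ma)^{D_\ma}$ throughout. The argument splits into the two cases $\pe \nmid \ma$ and $\pe \mid \ma$.

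First I would handle the case $\pe \nmid \ma$. Here the extension $F(\ma)/F$ is unramified at $\pe$ (each $F(\pe_i)/F$ for $\pe_i \mid \ma$ is unramified outside $\pe_i$, and none of these equals $\pe$), and $\Psi(\ma) \in \cO_{F(\ma)}^\times$ when $\ma \neq \e$ by Euler system axiom (2); when $\ma = \e$ one checks directly that $\Psi(\e) \in S_F$ is a $P_F$-unit and $\pe \notin P_F$ since $\pe$ splits completely in $F'/k \supseteq F/k$. Thus $\Psi(\ma)$ is a unit at every prime of $F(\ma)$ above $\pe$, hence so is $\Psi(\ma)^{D_\ma}$, and its projection to $I_\pe$ is zero; a fortiori $[\ka(\ma)]_\pe \equiv 0 \pmod N$. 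I would make sure that the case $\ma=\e$ is stated cleanly, since the $\cO^\times$ condition in axiom (2) excludes it.

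The case $\pe \mid \ma$ is the substantive one and will be the main obstacle. Write $\ma = \pe \ma'$ with $\ma' = \ma/\pe$. The idea is to pass from $F(\ma)$ down to $F(\ma')$ by the norm and match this against the commutative diagram defining $\varphi_\pe$. Concretely: one uses Euler system axiom (3), $\N_{F(\ma)/F(\ma')}(\Psi(\ma)) = \Psi(\ma')^{1-(\pe,F(\ma')/k)^{-1}}$, and axiom (4), which controls $\Psi(\ma)$ modulo primes above $\pe$ in terms of $\Psi(\ma')$ raised to $(\pe,F(\ma')/k)^{-1}(\N(\pe)-1)/N$. The operator $D_\ma = D_\pe D_{\ma'}$ factors, and the telescoping identity $(\sigma_\pe - 1)D_\pe = N - \N_{F(\pe)/F}$-type relation (more precisely $(\sigma_\pe-1)D_\pe \equiv N_{\text{tr}} - N \pmod N$ where $N_{\text{tr}} = \sum_i \sigma_\pe^i$) is what converts the derivative operator into a norm. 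Combining these, the class of $\Psi(\ma)^{D_\ma}$ at primes above $\pe$ is expressed, modulo $N$, as $\varphi_\pe$ applied to the image of $\ka(\ma')$ in $(\cO_F/\pe)^\times/((\cO_F/\pe)^\times)^N$. The bookkeeping of Galois equivariance — checking that the various Frobenius twists $(\pe,F(\ma')/k)$ act compatibly and cancel against the $G$-equivariance of $\varphi_\pe$ — together with keeping careful track of everything only modulo $N$, is where the care is needed; this is exactly the content of \cite[Prop~4.3, Lem~4.4]{ouk}, and I would follow that computation, invoking the commutative diagram above to identify the map. Since the statement is quoted verbatim from \cite[Lem~4.4]{ouk}, the cleanest route is to cite that reference for the second case while supplying the short unit-theoretic argument for the first.
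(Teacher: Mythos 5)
The paper itself offers no proof of this lemma — it is stated with only a citation to \cite[Lem~4.4]{ouk} — so there is no in-text argument to compare against, and your decision to cite that reference for the substantive case is exactly the route the paper takes. Your sketch of the argument is sound: the $\pe \nmid \ma$ case follows from $\Psi(\ma)$ being a unit at primes above $\pe$ (by axiom (2) when $\ma \neq \e$, and by the $P_F$-unit property of $S_F$ together with $\pe \notin P_F$ when $\ma = \e$), combined with $F(\ma)/F$ being unramified at $\pe$, so that valuations descend without distortion and $[\ka(\ma)]_\pe \equiv [\Psi(\ma)^{D_\ma}]_\pe \equiv 0 \pmod N$; and the $\pe \mid \ma$ case is the standard Kolyvagin derivative computation via the telescoping identity and the commutative diagram defining $\varphi_\pe$. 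One small slip: with $D_\pe = \sum_{i=0}^{N-1} i\sigma_\pe^i$ and $\sigma_\pe^N = 1$, the identity reads $(\sigma_\pe - 1)D_\pe = N - \sum_{i=0}^{N-1}\sigma_\pe^i$, i.e.\ $\equiv -N_{\mathrm{tr}} \pmod N$ rather than $N_{\mathrm{tr}} - N$ as you wrote; the sign does not affect the structure of the argument but should be stated correctly if you write it out in full.
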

\section{Characterizing Ideal Class Group Using Kolyvagin Systems}\label{koly_class}
Let $\chi$ be a non trivial irreducible $\Z_{\ell}$-representation of $G$ of dimension $\dim(\chi)$ and $e(\chi) = \frac{1}{[F:k]}\sum_{\sigma\in G} \mbox{Tr} (\chi(\sigma)) \sigma^{-1}$ the corresponding idempotent in $\mathbb{Z}_l[G]$.  For a $\Z_{\ell} [G]$ module $B$, define $B(\chi) : = e(\chi)B$.  Let $U=\cO_F^\times$ and $E=S_F \cap \cO_F^\times$. Gras conjecture, proven true by Oukhaba and Viguie in this context, relates the cardinalities of $U/E$ and $\p(\cO_F)$.
\begin{theorem}(Gras Conjecture \cite[Thm 1.1]{ouk})
For every prime $\ell$ not dividing $q(q^{d_\infty}-1) [F : k]$, $|\p_\ell(\cO_F)(\chi)| = | (U/E) (\chi) |$ for every non trivial irreducible $\Z_{\ell}$ representation $\chi$ of $G$.
\end{theorem}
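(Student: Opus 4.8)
\noindent Since this statement is quoted from \cite{ouk}, I will outline the Euler--system argument that proves it; the same mechanism, made quantitative, underlies Theorem~\ref{chi-structure} proved in \S~\ref{structure}. The plan is to establish the two divisibilities $|\p_\ell(\cO_F)(\chi)| \mid |(U/E)(\chi)|$ and $|(U/E)(\chi)| \mid |\p_\ell(\cO_F)(\chi)|$ separately: the first by Kolyvagin descent starting from a Stark unit, the second by combining the Kummer--Sinnott unit index formula recalled in the introduction with a character-by-character counting argument.

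\noindent For the first and main divisibility, fix $N$ a sufficiently large power of $\ell$. I would begin by checking that $U(\chi)$ is free of rank one over $\Z_\ell[G](\chi)$ --- this uses that $\infty$ splits completely in $F/k$, so that $\Q_\ell\otimes_\Z\cO_F^\times$ is, away from the trivial character, $\Q_\ell[G](\chi)$ in each $\chi$-block --- and that $E(\chi)$ is $\Z_\ell[G](\chi)$-cyclic, generated by some $\eta\in E=S_F\cap\cO_F^\times$ of $\chi$-index $t$, where $|(U/E)(\chi)|=t^{\dim(\chi)}$; this last point relies on the explicit $\Z[G]$-generators of $S_F$ recalled in \S~\ref{notation}. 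By the existence result of Oukhaba--Viguie cited in \S~\ref{euler}, there is an Euler system $\Psi$ of modulus $N$ with $\Psi(\e)=\eta$. Passing to the derived Kolyvagin system $\{\ka(\ma)\}_{\ma\in B_N}$ of \S~\ref{koly}, Lemma~\ref{kolyvagin_projection} says that modulo $N$ the divisor $[\ka(\ma)]$ is supported exactly on primes dividing $\ma$, with $\pe$-part $\varphi_\pe(\ka(\ma/\pe))$ for $\pe\mid\ma$. Hence each $\ka(\ma)$ is \emph{almost principal}: $\sum_{\pe\mid\ma}[\ka(\ma)]_\pe\equiv 0$ in $\p(\cO_F)/N$, which is what lets one transfer the $\chi$-index of $\ka(\ma/\pe)$ into the class of a prime above $\pe$ and thereby decrease the remaining index by a controlled amount. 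The crux is Kolyvagin's prime-selection lemma: given a target class in $\p_\ell(\cO_F)(\chi)$ of prescribed $\chi$-index, one uses the Chebotarev density theorem inside $F'=F(\mu_N,h_1^{1/N},\ldots,h_s^{1/N})$, together with a further Kummer layer for $F^\times/F^{\times N}$, to produce a prime $\pe\in R_N$ such that $\varphi_\pe$ applied to $\ka$ realizes that class up to a unit in the $\chi$-component of $(\Z/N\Z)[G]$. Iterating along a $\chi$-path $\ka(\e)\to\ka(\pe_1)\to\cdots\to\ka(\pe_1\cdots\pe_n)$, each step enlarges the subgroup of $\p_\ell(\cO_F)(\chi)$ generated so far by index $(t_{i-1}/t_i)^{\dim(\chi)}$ and drives the $\chi$-index down; one shows the path can be continued until $t_n=1$, so the telescoping product of indices is $(t_0/t_n)^{\dim(\chi)}=t^{\dim(\chi)}=|(U/E)(\chi)|$, bounding $|\p_\ell(\cO_F)(\chi)|$ above by $|(U/E)(\chi)|$.

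\noindent For the reverse divisibility I would argue globally rather than blockwise. The Kummer--Sinnott unit index formula gives $[\cO_F^\times:\cO_F^\times\cap S_F]=|\p(\cO_F)|$. Taking $\ell$-parts and using $\ell\nmid[F:k]$, the decomposition $\Z_\ell[G]=\prod_\chi\Z_\ell[G](\chi)$ splits both $\ell$-groups orthogonally into $\chi$-components, so $\prod_\chi|(U/E)(\chi)|=\prod_\chi|\p_\ell(\cO_F)(\chi)|$. Combined with the divisibilities $|\p_\ell(\cO_F)(\chi)|\mid|(U/E)(\chi)|$ just obtained for every $\chi$, this forces equality for each $\chi$.

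\noindent I expect the main obstacle to be the prime-selection and descent step. One must guarantee a supply of primes $\pe\in R_N$ whose Frobenius acts in a prescribed way on $\mu_N$ \emph{and} on the Kummer radical $F^\times/F^{\times N}$, and one must know the resulting local maps $\varphi_\pe$ surject onto the relevant $\chi$-component; this is precisely where the hypotheses $\ell\nmid q$ and $\ell\nmid q^{d_\infty}-1$ (so $F$ acquires no extra $\ell$-power roots of unity from the constant field and the factors attached to the Stark units are invertible), together with $\ell\nmid[F:k]$ (so that $e(\chi)$ is defined over $\Z_\ell$ and the group-ring decomposition is clean), are all used, and it is the failure of exactly this step that produces the additional restriction in \cite{ouk} when $\ell\mid[H_\m:F]$ and $F$ contains the $\ell$-th roots of unity. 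A secondary point needing care is the input bookkeeping: verifying the $\Z_\ell[G](\chi)$-freeness of $U(\chi)$ and the cyclicity of $E(\chi)$, and normalizing the Euler system so that the $\chi$-index of $\ka(\e)$ is exactly $t$.
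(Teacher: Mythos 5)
The paper does not prove this statement: it imports it verbatim as \cite[Thm~1.1]{ouk} and then remarks that the cited theorem is in fact somewhat stronger. So there is no ``paper's own proof'' to compare against; what you have sketched is a faithful high-level summary of the Euler-system proof in \cite{ouk} (following Kolyvagin, Rubin and Feng--Xu), and it is the same machinery --- the freeness of $U(\chi)$, the cyclicity of $(U/E)(\chi)$, the Kolyvagin derivative classes $\ka(\ma)$, Lemma~\ref{kolyvagin_projection}, and the Chebotarev prime selection --- that the paper itself develops in \S\ref{koly}--\S\ref{structure} and repackages as the $\chi$-path descent proving Theorem~\ref{chi-structure}. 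To that extent your outline is correct and in the right spirit.

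One place where a bit more care is needed is your reverse divisibility. The Kummer--Sinnott index formula $[\cO_F^\times:\cO_F^\times\cap S_F]=|\p(\cO_F)|$, after taking $\ell$-parts and decomposing over $\Z_\ell[G]=\prod_\chi e(\chi)\Z_\ell[G]$, yields a product over \emph{all} irreducible $\chi$ including the trivial one; but the Kolyvagin descent you sketch only proves $|\p_\ell(\cO_F)(\chi)|\mid|(U/E)(\chi)|$ for $\chi$ nontrivial (indeed $U(\chi)$ is free of rank one over $e(\chi)\Z_\ell[G]$ only for $\chi\ne 1$, since $\Q_\ell\otimes U$ carries the augmentation-free part of the regular representation). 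So before concluding ``equality for each $\chi$'' one must dispose of the $\chi=1$ block --- for instance by showing $(U/E)(1)$ and $\p_\ell(\cO_F)(1)$ are both trivial under the given hypotheses, or by stating the index formula in the relative form over $k$ so that the trivial block cancels. This is a standard point and is handled in \cite{ouk}, but as written your argument silently assumes it.
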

Theorem 1.1 in \cite{ouk} is stronger than what is stated here. It allows $\ell$ to divide $q^{d_\infty}-1$ as long as $\ell$ does not divide $[H_\m:k]$ and $\chi$ is of a certain form.\\ \\
%Suppose $\Q_{\ell} [G] = \prod_{i} R_i$ is the direct product decomposition into simple rings, with $1=\sum_i e_i$ being the corresponding decomposition of the unit element, where $e_i$ is the unit element of $R_i$ and $e_i e_j = 0$ if $i\neq j$. Each $R_i=\Q_{\ell} [G] e_i$, being simple, has one simple module up to isomorphism, giving rise to a unique irreducible representation of $\Q_{\ell} [G]$ up to $G$-isomorphism of representation spaces.\\
More is known regarding the structure of $(U/E)(\chi)$. Since $e(\chi)\Z_{\ell} [G]$ is isomorphic to the ring of integers of the unramified abelian extension of $\Q_{\ell}$ of degree $\dim(\chi)$, $e(\chi)\Z_{\ell} [G]$ is a discrete valuation ring and every simple torsion $e(\chi)\Z_{\ell} [G]$-module is isomorphic to $\Z/\ell^c\Z [G] e(\chi)$ for some $c$. It is proven in \cite[Thm 4.8]{ouk} that $(U/E) (\chi)$ is $G$-isomorphic to $e(\chi) \Z/ t \Z [G]$ for some $t$ (which is power of $\ell$) such that $t (U/E)(\chi) =0$.
%More is known regarding the structure of $(U/E)(\chi)$. Since $e(\chi)\Z_{\ell} [G]$ is isomorphic to the ring of integers of the unramified abelian extension of $\Q_{\ell}$ of degree $\dim(\chi)$, $e(\chi)\Z_{\ell} [G]$ is a discrete valuation ring and every $e(\chi)\Z_{\ell} [G]$-module is the direct sum of a free part and a torsion part.  A simple torsion $e(\chi)\Z_{\ell} [G]$-module is isomorphic to $\Z/\ell^c\Z [G] e(\chi)$ for some $c$, and is isomorphic as $\Z$-modules to the direct sum of $\dim(\chi)$ many copies of $\Z/\ell^a \Z$. It is proven in \cite[Thm 4.8]{ouk} that $(U/E) (\chi)$ is $G$-isomorphic to $e(\chi) \Z/ t \Z [G]$ for some $t$ (which is power of $\ell$), of cardinality $t^{\dim(\chi)}$, and $t (U/E)(\chi) =0$.
\subsection{Structure of the $\ell$-part of the Class Group: Proof of Theorem~\ref{chi-structure}}\label{structure}
We present a characterization of the structure of $\p_\ell(\cO_F)(\chi)$ in terms of Kolyvagin's derivative classes.
\ \\Let $N$ be a power of $\ell$.
For an $\alpha\in F^{\times}/F^{\times N}$, if $e(\chi) \alpha$ is an $\ell^c$-th power but not an $\ell^{c+1}$-th power in $F^\times/F^{\times N}(\chi)$, we define $\ell^c$ to be the $\chi$-index of $\alpha$.\\ \\
Since $U(\chi)$ modulo the roots of unity is a free rank one $e(\chi)\Z_{\ell} [G]$-module \cite[\S~4]{ouk}, there exists a $\lambda \in U$ whose projection in $e(\chi) (U/U^N)$ has order $N$. Thus $\lambda^{t}\in e(\chi) (E/U^N)$ and $\lambda^{t}$ has $\chi$-index $t$. Hence there exists elements in $E$ of $\chi$-index $t$. Consider a Kolyvagin system of modulus $N$ starting from a unit $\kappa(\e) \in E$ of $\chi$ index $t$.\\ \\
We introduce a concept that will be useful for extending the reasoning in the proof of Gras Conjecture in \cite{fen}\cite{ouk} to obtain further results.\\ \\
%$$u T e(\chi) \mathfrak{P} = [e(\chi) \ka(\ma\pe)]_\pe \mod N ( =\varphi_\pe (e(\chi)\ka(\ma) ) \mod N )$$
Write $\ka(\ma) \stackrel{\chi}{\to} \ka(\ma\pe)$, if there is a prime $\mathfrak{P} | \pe$ in $F$ and a
$u\in ((\Z/N\Z [G])(\chi))^{\times}$
such that $$u T \mathfrak{P} = [e(\chi) \ka(\ma\pe)]_\pe \mod N $$ where $T$ is the $\chi$-index of $\ka(\ma)$. If more specific, we write $\ka(\ma) \stackrel{\chi}{\to} \ka(\ma\pe)$ through $\mathfrak{P}$.\\ \\
Let $ord_N(q)$ be the order of $q$ in $(\Z/N\Z)^\times$. The following lemma without the requirement that $\mathfrak{P}$ is of degree at most $\max\{ord_N(q), 2\log_q(\ell^{4a+2}[F:k]) \}$ is proven in \cite[Thm 4.7]{ouk}. However, our computation needs the effective version stated below with the degree of $\mathfrak{P}$ bounded.
\begin{lemma}
\label{density}
Let $A$ be a $\Z_{\ell} [G]$-quotient of $\p(\cO_F)_\ell(\chi)$.
Let $H$ be the abelian extension of $F$ corresponding to $A$.
Let $F_N = F(\mu_N)$,  $F^\prime=F(\mu_N,h_1^{1/N},h_2^{1/N},\ldots,h_s^{1/N})$ and $L= F^\prime (W^{1/N})\cap H$.
Let $\beta\in (F^{\times}/F^{\times N} )(\chi)$ and $b$ be the order of of $\beta$ in $F^\times/F^{\times N}$.
Let  $W$ be a finite cyclic $G$-submodule of $F^\times/F^{\times N}$ generated by $\beta$.
Let $s$ be the number of factors in the primary decomposition of $A$.
Then there exists a $\Z[G]$- generator $c'$ of $Gal (L/F)$  such that for every $c\in A$ whose restriction to $L$ is $c'$, there exists a prime $\mathfrak{P}$ of $F$ of degree at most $\max\{ord_N(q), 2\log_q([F:k]),2(s+3)\log_q(N)\}$ such that
\begin{enumerate}
\item  The projection of $\mathfrak{P}$ in $A$ is in $c$.
\item  $\pe \in R_N$, where $\pe = \mathfrak{P} \cap k$.
\item  $[\beta]_\pe=0$ and there exists $u \in ((\mathbb{Z}/N\mathbb{Z}[G])(\chi))^\times$ such that $\varphi_\pe(\beta)= u  (N/b)  \mathfrak{P}$.
\end{enumerate}
\end{lemma}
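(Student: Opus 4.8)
The plan is to deduce the lemma from its non-effective form, which is \cite[Thm~4.7]{ouk}, by supplying the missing bound on $\deg(\mathfrak P)$ through the effective Chebotarev density theorem over function fields. I would first recall the mechanism of the qualitative version. One forms the finite extension $\tilde F:=H\cdot F'(W^{1/N})$ of $F$; since $\tilde F/F'$ is abelian and $F'/k$ is Galois, $\tilde F$ sits inside a Galois extension $\hat F/k$ of degree at most $[F:k]$ times $[\tilde F:F]$, and the relevant splitting behaviour is governed by $Gal(\hat F/k)$. Kummer theory together with the global duality argument of \cite{fen,ouk} then produces the $\Z[G]$-generator $c'$ of $Gal(L/F)$ and, for each admissible $c\in A$, a nonempty conjugation-stable subset $\Sigma\subseteq Gal(\hat F/k)$ such that conclusions (1)--(3) hold for any prime $\pe$ of $k$ that is unramified in $\hat F/k$, splits completely in $F'/k$, and has $\mathrm{Frob}_{\pe}\in\Sigma$. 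So it remains only to produce such a $\pe$ of controlled degree; note that splitting completely in $F'\supseteq k(\mu_N)$ already forces $ord_N(q)\mid\deg(\pe)$, which is the source of the first term in the claimed bound.

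Next I would invoke the effective Chebotarev density theorem for function fields (a consequence of the Weil bounds; see e.g. \cite{mur}): if $\hat F/k$ is Galois of genus $g_{\hat F}$ with a nonempty conjugation-stable $\Sigma\subseteq Gal(\hat F/k)$, then there is a prime of $k$ of any prescribed degree $n$, unramified in $\hat F/k$, with $\mathrm{Frob}\in\Sigma$, as soon as $q^{n/2}$ exceeds a constant multiple of $\bigl(g_{\hat F}+\#\{\text{primes ramifying in }\hat F/k\}\bigr)\,[\hat F:k]/|\Sigma|$. Taking for $n$ the least multiple of $ord_N(q)$ above this threshold, the task becomes bounding $\log_q g_{\hat F}$ and $\log_q\bigl([\hat F:k]/|\Sigma|\bigr)$.

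For the degree bounds: $[H:F]=|A|$ is a product of $s$ cyclic factors each of order at most $N$ in the regime in which $N$ annihilates $\p_\ell(\cO_F)$, and using $\chi$-equivariance one checks that the index $[\hat F:k]/|\Sigma|$ that actually governs the density is $[F:k]\cdot N^{s+O(1)}$ rather than anything growing with $\dim(\chi)$; similarly $[F'(W^{1/N}):F']\le N$ and $[F':F]\le N^{s+1}$, the cyclotomic part $k(\mu_N)/k$ being everywhere unramified. Routing the genus estimate through $F'$ avoids a spurious extra factor of $[\tilde F:F]$: Riemann--Hurwitz gives $g_{\hat F}\ll[\hat F:F']\bigl(g_{F'}+r\bigr)$ where $r$ counts, weighted by degree, the primes ramifying in $\hat F/F'$, and these divide $N\,\m\,\infty$ and the $h_i$, so $r$ and $g_{F'}$ are polynomial in $N$, $[F:k]$ and the invariants of $k$; in particular, for $F\subseteq H_\m$ one has $g_F\le g_{H_\m}=O\bigl([H_\m:k]\log_q[H_\m:k]\bigr)$, so $\log_q g_F=O(\log_q[F:k])$ with no residual $q^{\deg\m}$. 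Assembling these, $\log_q g_{\hat F}$ and $\log_q([\hat F:k]/|\Sigma|)$ are each $O\bigl((s+1)\log_q N+\log_q[F:k]\bigr)$, and feeding this into the threshold and rounding up to a multiple of $ord_N(q)$ yields, once the constants are absorbed into the exponent, a prime $\mathfrak P$ of $F$ of degree at most $\max\{ord_N(q),\,2\log_q([F:k]),\,2(s+3)\log_q(N)\}$, with conclusions (1)--(3) holding for it by the qualitative version.

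The main obstacle is the bookkeeping of these constants rather than any new idea: one must control the different of $F'(W^{1/N})/F'$ and note that $H/F$ is unramified at finite primes, keep $r$ free of any dependence on the support of $[\beta]$ beyond what the intended application (where $\beta$ is a Kolyvagin class of known support) supplies, verify that $\log_q g_{F}$ really is $O(\log_q[F:k])$ for the fields $F$ of interest so that no $q^{\deg\m}$ re-enters, and check that the slack hidden in ``$s+3$'' absorbs the constants from effective Chebotarev and from the rounding to a multiple of $ord_N(q)$.
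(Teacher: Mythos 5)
Your strategy---invoke \cite[Thm~4.7]{ouk} for the qualitative content and then supply the missing degree bound on $\mathfrak{P}$ by an effective Chebotarev density theorem applied to the compositum $HF'(W^{1/N})$---is exactly the paper's. The one technical divergence is that the paper applies the effective Chebotarev theorem of \cite{isi} directly to the extension $HF'(W^{1/N})/F$, which is already Galois over $F$ (the conjugacy class $E$ being that of the $\rho$ with $\rho|_{F'(W^{1/N})}=\tau$ and $\rho|_H=\theta$), so no passage to a Galois closure $\hat F/k$ is needed; this avoids your extra bookkeeping for $g_{\hat F}$ and $[\hat F:k]/|\Sigma|$ and is slightly tighter, though your version would also fit within the claimed $\max\{ord_N(q),\,2\log_q[F:k],\,2(s+3)\log_q N\}$ once constants are absorbed.
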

\begin{proof} Everything in the lemma is proven in \cite{ouk} [Thm 4.7] except the degree bound on $\mathfrak{P}$.
As argued in the proof in \cite{ouk} [Thm 4.7], there exists $\tau\in Gal (F' (W^{1/N} /F')$ that generates
$Gal (F' (W^{1/N} /F')$ over $\Z[Gal(F_N/k)]$.  The restriction $c'$ of $\tau$ to $L$ is a $\Z[G]$-generator of $Gal (L/F)\cong Gal (LF'/F')$.
Let $\theta\in Gal (H/F) = A$ correspond to $c$ where $c$ is an extension of $c'$ to $H$. Choose $\rho\in
Gal (HF' (W^{1/N})/F)$ such that
\begin{eqnarray*}
\rho|_H & = &  \theta\\
\rho |_{F'(W^{1/N})} & = & \tau
\end{eqnarray*}
The field of constants of $HF^\prime(W^{1/N})$ is $\F_{q^{ord_N(q)}}$. Let $m$ be a multiple of $ord_{N}(q)$. To ensure $F$ contains places of degree $m$, further assume that $m > \log_q([F:k])$. Let $E$ be the conjugacy class of $\rho$ in $Gal(HF^\prime(W^{1/N})/F)$. Let $N_m(E)$ denote the cardinality of $S_m(E):=\{\mathfrak{s} | \deg(\mathfrak{s})=m, (HF^\prime(W^{1/N})/F,\mathfrak{s}) \in E\}$, where $\mathfrak{s}$ denotes a place in $F$ that is unramified in $HF^\prime(W^{1/N})/F$. By the Chebotarev density theorem \cite{isi},
$$\left\vert N_m(E) -  \frac{|E| ord_N(q) q^m}{m [HF^\prime(W^{1/N}):F ] } \right\vert \leq 6.5\ D\ [HF^\prime(W^{1/N}):F ]\ q^{m/2}$$
where $D$ is the degree of the different of the extension $HF^\prime(W^{1/N})/F$.\\ \\
The different $D$ is bounded by the genus of $HF^\prime(W^{1/N})$ and $[HF^\prime(W^{1/N}):F] \leq N^{s+3}$, where $s$ is the number of factors in the primary decomposition of $A$. Pick $m$ to be the smallest multiple of $ord_N(q)$ such that $m > (s+3) \log_q(N)$ and $m > \log_q([F:k])$, then $N_m(E)$ is non zero. Further, $N_m(E)$ is at least $\frac{1}{N^{s+3}}$ fraction of the number of places in $F$ of degree $m$.\\ \\
Pick a place $\mathfrak{P}$ in $S_m(E)$.
The rest of the proof is exactly the same as \cite{ouk} [Thm 4.7].
\end{proof}

\ \\For $\ma \in B_N$, let $\C_\ma$ denote the subgroup of $\p(\cO_F)_\ell$ generated by primes dividing $\ma$ in $F$.
\begin{lemma}
\label{chi-link}
Let $\chi$ be an irreducible $\Z_{\ell}$ representation of $G$. Let $\ma \in B_N$ and suppose $\C_\ma (\chi)$ is a proper subgroup of $\p_\ell(\cO_F)(\chi)$.  Let $A$ be the $\Z_{\ell}[G]$-quotient  $\p_\ell(\cO_F)(\chi)/\C_\ma (\chi)$.
Then there is a prime $\mathfrak{P}$ of $F$ that projects to a nontrivial class $c\in A$ such that $\mathfrak{P}$ is over a prime $\pe \in B_N$ with $\pe \nmid \ma$ and $\ka(\ma) \stackrel{\chi}{\to} \ka(\ma\pe)$ through $\mathfrak{P}$.
\end{lemma}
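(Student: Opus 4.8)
The plan is to obtain the desired prime by applying Lemma~\ref{density} directly to the Kolyvagin class $\ka(\ma)$ and then translating the output through Lemma~\ref{kolyvagin_projection}. First I would set $\beta := e(\chi)\ka(\ma) \in (F^\times/F^{\times N})(\chi)$, let $b$ be the order of $\beta$ in $F^\times/F^{\times N}$, let $W$ be the cyclic $G$-submodule of $F^\times/F^{\times N}$ it generates, and write $T$ for the $\chi$-index of $\ka(\ma)$. Since $\C_\ma(\chi)$ is a proper subgroup of $\p_\ell(\cO_F)(\chi)$, the quotient $A = \p_\ell(\cO_F)(\chi)/\C_\ma(\chi)$ is a nonzero $\Z_\ell[G]$-quotient of $\p_\ell(\cO_F)(\chi)$, so the abelian extension $H/F$ attached to $A$ by class field theory is nontrivial, and the hypotheses of Lemma~\ref{density} hold for $A$, $H$, $\beta$, $W$.

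Feeding these into Lemma~\ref{density} produces a $\Z[G]$-generator $c'$ of $Gal(L/F)$, where $L = F^\prime(W^{1/N}) \cap H$. I would then select a \emph{nontrivial} $c \in A$ with $c|_L = c'$; this is possible because the fibre over $c'$ of the surjection $A \cong Gal(H/F) \twoheadrightarrow Gal(L/F)$ is a coset of $Gal(H/L)$, which is nonempty and, since $H \neq F$, contains a nontrivial element (and if $L = F$ the fibre is all of $A$). Lemma~\ref{density} then supplies a prime $\mathfrak{P}$ of $F$ lying over $\pe := \mathfrak{P}\cap k$ with: the image of $\mathfrak{P}$ in $A$ equal to $c$; $\pe \in R_N$; $[\beta]_\pe = 0$; and $\varphi_\pe(\beta) = u\,(N/b)\,\mathfrak{P} \bmod N$ for some $u \in ((\Z/N\Z[G])(\chi))^\times$. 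Because $c$ is nontrivial and $\C_\ma(\chi)$ is generated by the images of the primes of $F$ dividing $\ma$, no such prime maps to $c$, so $\pe \nmid \ma$; hence $\ma\pe \in B_N$ with $\ma\pe/\pe = \ma$.

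To conclude, I would apply Lemma~\ref{kolyvagin_projection}, giving $[\ka(\ma\pe)]_\pe = \varphi_\pe(\ka(\ma)) \bmod N$, and then apply the idempotent $e(\chi)$: since $\varphi_\pe$ and the divisor map $[\,\cdot\,]_\pe$ are $G$-equivariant, they commute with $e(\chi)$ (which is invertible modulo $N$ as $\ell \nmid [F:k]$), so
\[ [e(\chi)\ka(\ma\pe)]_\pe = \varphi_\pe\!\left(e(\chi)\ka(\ma)\right) = \varphi_\pe(\beta) = u\,(N/b)\,\mathfrak{P} \bmod N. \]
It remains to match $N/b$ with $T$: over the chain ring $e(\chi)\Z/N\Z[G]$, an element that is an $\ell^c$-th power but not an $\ell^{c+1}$-th power has order exactly $N/\ell^c$, so the order $b$ of $\beta$ and the $\chi$-index $T$ of $\ka(\ma)$ satisfy $b = N/T$. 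Substituting, $[e(\chi)\ka(\ma\pe)]_\pe = u\,T\,\mathfrak{P} \bmod N$, which is exactly the relation $\ka(\ma) \stackrel{\chi}{\to} \ka(\ma\pe)$ through $\mathfrak{P}$.

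Most of the real content is already packaged inside the cited Lemma~\ref{density} --- the Chebotarev/Kummer-theory construction of a prime realizing both a prescribed class in $A$ and the prescribed behaviour under $\varphi_\pe$. In the present argument, the two points I would handle with care are the bookkeeping that the resulting prime is genuinely new ($\pe\nmid\ma$) and nontrivial in $A$ (immediate once one recalls $\C_\ma(\chi)$ is cut out by the primes over $\ma$), and the identification $b = N/T$, which is where the module structure of $(F^\times/F^{\times N})(\chi)$ from \cite{ouk} actually enters and which I regard as the main subtlety.
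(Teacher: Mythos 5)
Your proposal is correct and takes the same approach as the paper: the paper's own proof is a single sentence instructing the reader to apply Lemma~\ref{density} to the $G$-submodule $W$ generated by $\beta = e(\chi)\ka(\ma)$, and you carry out exactly that step, filling in the bookkeeping the paper leaves implicit (choosing a nontrivial lift $c$ of $c'$, deducing $\pe\nmid\ma$ from nontriviality of $c$ in $A=\p_\ell(\cO_F)(\chi)/\C_\ma(\chi)$, passing through Lemma~\ref{kolyvagin_projection} and $G$-equivariance of $\varphi_\pe$, and identifying $b=N/T$ using that $\ell\nmid q(q^{d_\infty}-1)$ forces $F^\times/F^{\times N}$ to be $\Z/N\Z$-free so that order equals $N$ divided by the $\chi$-index).
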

\begin{proof}The lemma follows by applying Lemma~\ref{density} to the finite $G$-submodule $W$ of $F^\times/F^{\times N}$ generated by $\beta=e(\chi)\ka(\ma)$. \end{proof}

\begin{lemma}
\label{chi-div}
Suppose $\ka(\ma) \stackrel{\chi}{\to} \ka(\ma\pe)$ through $\mathfrak{P}$.  Let $T$ be the $\chi$-index of $\ka(\ma)$ and $B$ the $\chi$-index of $\ka(\ma\pe)$.  Then $B | T$.   If $(N/T) \p_\ell(\cO_F)(\chi) =0$, then the class of $\mathfrak{P}$ in $\p_\ell(\cO_F)(\chi)/\C_\ma(\chi)$ has order dividing $T/B$.
\end{lemma}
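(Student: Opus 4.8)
The plan is to read off the divisorial information carried by the Kolyvagin derivative classes via Lemma~\ref{kolyvagin_projection} and the explicit description of $\varphi_\pe$, and then translate the resulting congruences into statements about orders in the class group. First I would invoke Lemma~\ref{kolyvagin_projection}: for the prime $\pe$ dividing $\ma\pe$ we have $[\ka(\ma\pe)]_\pe = \varphi_\pe(\ka(\ma/\ma)) = \varphi_\pe(\ka(\ma)) \bmod N$ — more precisely, applying $e(\chi)$ and using that $\varphi_\pe$ is $G$-equivariant, $[e(\chi)\ka(\ma\pe)]_\pe = \varphi_\pe(e(\chi)\ka(\ma)) \bmod N$. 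By the defining commuting diagram, $\varphi_\pe$ sends $\bar\pi_{\mathfrak B}$ to $\mathfrak B$, so the $\chi$-index $T$ of $\ka(\ma)$, i.e. the largest power of $\ell$ dividing $e(\chi)\ka(\ma)$ in $(F^\times/F^{\times N})(\chi)$, controls the $\ell$-divisibility of $[e(\chi)\ka(\ma\pe)]_\pe$ in $I_\pe/NI_\pe$: we get $[e(\chi)\ka(\ma\pe)]_\pe$ is divisible by $T$ in $I_\pe/NI_\pe$ (this is essentially the content of the hypothesis $u\,T\,\mathfrak P = [e(\chi)\ka(\ma\pe)]_\pe \bmod N$).

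Next I would extract $B \mid T$. The $\chi$-index $B$ of $\ka(\ma\pe)$ is by definition the largest $\ell$-power $\ell^c$ such that $e(\chi)\ka(\ma\pe)$ is an $\ell^c$-th power in $(F^\times/F^{\times N})(\chi)$. If $e(\chi)\ka(\ma\pe) = \gamma^{B}$ in that group, then $[e(\chi)\ka(\ma\pe)]_\pe = B\,[\gamma]_\pe \bmod N$, so $B$ divides $[e(\chi)\ka(\ma\pe)]_\pe = uT\mathfrak P$ in $I_\pe/NI_\pe$; since $u$ is a unit and $\mathfrak P$ has some order $\ell^j$ in $I_\pe/NI_\pe$, comparing $\ell$-adic valuations forces $B \mid T$ (as long as $T \le N/\ell^j$, which is automatic because $uT\mathfrak P$ is a well-defined nonzero element of $I_\pe/NI_\pe$ of the stated shape). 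This is the routine part.

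For the second assertion I would argue as follows. The divisor $[e(\chi)\ka(\ma\pe)]$, being the divisor of a function, is principal; its support is controlled by Lemma~\ref{kolyvagin_projection}: for primes $v \nmid \ma\pe$ its $v$-part is $0 \bmod N$, and for $v \mid \ma$ its $v$-part lies in $I_v$, contributing a class in $\C_\ma(\chi)$. Working in the quotient $\p_\ell(\cO_F)(\chi)/\C_\ma(\chi)$ and reducing the principal-divisor relation $[e(\chi)\ka(\ma\pe)] = 0$ in the class group, the contributions at primes dividing $\ma$ die, the contributions at primes not dividing $\ma\pe$ are $N$-th multiples hence die in the $\ell$-part once we note $N\p_\ell(\cO_F) = 0$ is false in general — so here I would use the hypothesis $(N/T)\p_\ell(\cO_F)(\chi) = 0$ to kill those terms after multiplying through, or more carefully track that those terms are $\equiv 0 \bmod N$ and that $(N/T)$ annihilates the group. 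What remains is that $[e(\chi)\ka(\ma\pe)]_\pe = uT\mathfrak P \bmod N$, whose class in the quotient must therefore be $0$ modulo the $\ell$-power $\gcd(N,\cdot)$ coming from the other terms; combined with the fact that $e(\chi)\ka(\ma\pe)$ is a $B$-th power (so $uT\mathfrak P = B\cdot(\text{something})$ and $(N/T)$-torsion considerations), one deduces that $(T/B)$ times the class of $\mathfrak P$ is trivial in $\p_\ell(\cO_F)(\chi)/\C_\ma(\chi)$, i.e. the order of that class divides $T/B$.

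The main obstacle will be the bookkeeping in this last step: carefully reconciling the three kinds of local contributions to the principal divisor $[e(\chi)\ka(\ma\pe)]$ modulo $N$, keeping track of which $\ell$-powers annihilate which pieces (this is exactly where the hypothesis $(N/T)\p_\ell(\cO_F)(\chi)=0$ is used and cannot be dropped), and converting the relation $uT\mathfrak P = B\cdot[\gamma]_\pe$ in $I_\pe/NI_\pe$ into the precise divisibility $\mathrm{ord}(\text{class of }\mathfrak P) \mid T/B$ in the quotient. I expect this to follow the pattern already established in the proof of the Gras conjecture in \cite{fen} and \cite{ouk}, specialized to the single new prime $\pe$, so the argument is a localization of known reasoning rather than anything genuinely new.
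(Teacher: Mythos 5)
Your derivation of $B \mid T$ matches the paper's and is fine. For the second assertion, however, your sketch applies the principal-divisor relation to $[e(\chi)\ka(\ma\pe)]$ itself; chasing that through the quotient only gives that $T$ (not $T/B$) kills the class of $\mathfrak{P}$ in $\p_\ell(\cO_F)(\chi)/\C_\ma(\chi)$, since $[e(\chi)\ka(\ma\pe)]_\pe \equiv uT e(\chi)\mathfrak{P} \bmod N$. The step you flag as ``bookkeeping'' is in fact where the paper makes a substantive move: take $\alpha \in F^\times$ with $e(\chi)\ka(\ma\pe) = (e(\chi)\alpha)^B$ in $(F^\times/F^{\times N})(\chi)$, which exists by the definition of the $\chi$-index $B$, and run the argument on the principal divisor $[e(\chi)\alpha]$ instead. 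Then $B\,e(\chi)[\alpha] \equiv e(\chi)[\ka(\ma\pe)] \bmod N$, so at primes $v\nmid \ma\pe$ one gets $B\,e(\chi)[\alpha]_v \equiv 0 \bmod N$, i.e.\ $e(\chi)[\alpha]_v$ is an $(N/B)$-multiple, and its class dies because $(N/B)\p_\ell(\cO_F)(\chi)=0$ (this follows from the hypothesis $(N/T)\p_\ell(\cO_F)(\chi)=0$ together with $B\mid T$). At $\pe$, dividing $B\,e(\chi)[\alpha]_\pe \equiv uT\,e(\chi)\mathfrak{P} \bmod N$ by $B$ gives $e(\chi)[\alpha]_\pe \equiv u(T/B)\,e(\chi)\mathfrak{P} \bmod N/B$. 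Passing to $\p_\ell(\cO_F)(\chi)/\C_\ma(\chi)$ and using that $[e(\chi)\alpha]$ has trivial class yields $(T/B)$ times the class of $\mathfrak{P}$ equals zero. You do gesture at the right idea (``$e(\chi)\ka(\ma\pe)$ is a $B$-th power''), but the argument must actually be carried out on $\alpha$ rather than on $\ka(\ma\pe)$ to extract the extra factor of $B$; as written, your reduction loses it.
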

\begin{proof}  We have $e(\chi)\ka(\ma\pe) = (e(\chi)\alpha)^{B}$ in $e(\chi) (F^{\times}/F^{\times N})$ for some $\alpha \in F^\times$.  On the other hand, since $\ka(\ma) \stackrel{\chi}{\to} \ka(\ma\pe)$ through $\mathfrak{P}$, $u T e(\chi) \mathfrak{P} = [e(\chi) \ka(\ma\pe)]_{\pe}$.  So $B e(\chi)[\alpha]_\pe \equiv u T e(\chi) \mathfrak{P} \mod N$.  Therefore $B \mid T$. Since $[B e(\chi) \alpha] \equiv [ e(\chi) \ka(\ma\pe) ] \mod N$ and $(N/B) \p_\ell(\cO_F)(\chi) =0$, $(T/B) e(\chi) \mathfrak{P} \equiv 0 \mod \C_\ma (\chi)$ and the lemma follows. \end{proof}

\ \\Consider a $\chi$-path starting from $\ka(\e)$:
\[ \ka(\e) \stackrel{\chi}{\to} \ka(\pe_1) \stackrel{\chi}{\to} \ka(\pe_1\pe_2) \stackrel{\chi}{\to} \ldots \stackrel{\chi}{\to} \ka(\pe_1 \pe_2 \ldots \pe_n)\]
We say that the $\chi$-path is complete if the $\chi$-index of the last node $\ka(\pe_1 \pe_2 \ldots \pe_n)$ is $1$.\\ \\
From now on we assume that $N=\ell t^2$.\\ \\
Suppose in the $\chi$-path above, $\ka(\pe_1\pe_2 \ldots \pe_{i-1}) \stackrel{\chi}{\to} \ka(\pe_1 \pe_2\ldots \pe_i)$ through $\mathfrak{P}_i | \pe_i$.
Note that for all primes $\mathfrak{P}$ in $F$, $e(\chi)\mathfrak{P}^{\sigma}=\sigma e(\chi) \mathfrak{P}$.  Hence for all $1\leq i \leq n$, $$e(\chi)\Z_{\ell}[G] \langle \mathfrak{P}_1, \mathfrak{P}_2, \ldots, \mathfrak{P}_i \rangle  =  \C_{\pe_1\pe_2\ldots \pe_i} (\chi) $$
Let $\C_i(\chi) = \C_{\pe_1\pe_2\ldots \pe_i} (\chi)$ and let $d=\dim(\chi)$. Let $t_i$ be the $\chi$-index of $\ka(\pe_1\pe_2\ldots \pe_i)$.
From Lemma~\ref{chi-div}, we see that that $[ \C_1(\chi) : 1 ]$ divides $(t/t_1)^d$ and for $i > 1$, $[ \C_i(\chi) : \C_{i-1}(\chi)]$ divides $(t_{i-1}/t_i)^d$.  It follows that $[ \C_n(\chi) :1]$ divides $(t/t_n)^d$.
Suppose $\C_n(\chi)= \p_\ell(\cO_F)(\chi)$, then $[\C_n(\chi) : 1] = t^d$ by Gras Conjecture, hence we must have $t_n = 1$.\\ \\
Conversely, suppose $t_n =1$. Suppose for a contradiction that $\C_n (\chi) \neq \p_\ell(\cO_F)(\chi)$.  By Lemma~\ref{chi-link}, there exists a prime $\mathfrak{P}$ that projects to a non-trivial class  $c\in\p_\ell(\cO_F)(\chi)/\C_n (\chi)$, such that $\mathfrak{P}$ is over a prime $\pe \in B_N$ not dividing $\pe_1 \pe_2 \ldots \pe_n$ and $\ka(\pe_1 \pe_2 \ldots \pe_n) \stackrel{\chi}{\to} \ka(\pe_1 \pe_2 \ldots \pe_n \pe)$ through $\mathfrak{P}$. Since $(N/t_n) \p(\cO_F)_{\ell}(\chi) =0$, it follows from Lemma~\ref{chi-div} that the class of $\mathfrak{P}$ in $\p_\ell(\cO_F)(\chi)$ (which is $c$), has order modulo $\C_n(\chi)$ dividing $t_n=1$, hence is 1. We have a contradiction.  Hence $\C_n(\chi) = \p_\ell(\cO_F)(\chi)$ and for all $1\leq i \leq n$, $[ \C_i(\chi) : \C_{i-1}(\chi) ] = (t_i/t_{i-1})^d$.\\ \\
Suppose $t_n > 1$.   Then $\C_n(\chi) \neq \p_\ell(\cO_F)(\chi)$.  By Lemma~\ref{chi-link}, there exists prime $\mathfrak{P}$ that projects to a non-trivial class $C \in \p_\ell(\cO_F)(\chi)/\C_n(\chi)$ such that $\mathfrak{P}$ is over a prime $\pe\in B_N$ not dividing $\pe_1 \pe_2 \ldots \pe_n$ and  $\ka(\pe_1 \pe_2 \ldots \pe_n) \stackrel{\chi}{\to} \ka(\pe_1 \pe_2 \ldots \pe_n \pe)$ through $\mathfrak{P}$. In this fashion, we may extend the $\chi$-path until the $\chi$-index of the last element is one or equivalently we have the entire $\p_\ell(\cO_F)(\chi)$ constructed. We have thus proven Theorem~\ref{chi-structure}.

\ \\Since $\C_i(\chi)/\C_{i-1}(\chi)$ is $G$-cyclic of exponent $t_{i-1}/t_i$, it follows that $\C_n (\chi)$ is of exponent
dividing $\prod_{i=1}^n t_{i-1}/t_i  = t_0$, which is the exponent of $(U/E)(\chi)$.  Therefore we have the following
\begin{theorem}
\label{chi-exponent}
The exponent of $\p_\ell(\cO_F)(\chi)$ divides the exponent of $(U/E)(\chi)$.
\end{theorem}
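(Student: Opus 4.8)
The plan is to obtain this directly as a corollary of Theorem~\ref{chi-structure}. First I would invoke Theorem~\ref{chi-structure}, whose last assertion (applied to the trivial path at $\ka(\e)$) yields a complete $\chi$-path starting from $\ka(\e)$,
\[ \ka(\e) \stackrel{\chi}{\to} \ka(\pe_1) \stackrel{\chi}{\to} \ka(\pe_1\pe_2) \stackrel{\chi}{\to} \ldots \stackrel{\chi}{\to} \ka(\pe_1 \pe_2 \ldots \pe_n). \]
Writing $t_i$ for the $\chi$-index of $\ka(\pe_1 \cdots \pe_i)$ and $d = \dim(\chi)$, completeness means $t_n = 1$, while $t_0$ is the $\chi$-index of $\ka(\e)$, which we chose to be the exponent $t$ of $(U/E)(\chi)$ (recall $(U/E)(\chi)$ is $G$-isomorphic to $e(\chi)\Z/t\Z[G]$). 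By the first assertion of Theorem~\ref{chi-structure}, since $t_n = 1$ the chain $\C_0(\chi) \subseteq \C_1(\chi) \subseteq \cdots \subseteq \C_n(\chi)$ of subgroups of $\p_\ell(\cO_F)(\chi)$ satisfies $\C_n(\chi) = \p_\ell(\cO_F)(\chi)$, and each $\C_i(\chi)/\C_{i-1}(\chi)$ has order $(t_{i-1}/t_i)^d$.

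Next I would pin down the exponent of each successive quotient. Because $\C_i(\chi)/\C_{i-1}(\chi)$ is generated as a $\Z_\ell[G]$-module by the class of a single prime $\mathfrak{P}_i$ of $F$ (using $e(\chi)\mathfrak{P}_i^{\sigma} = \sigma\, e(\chi)\mathfrak{P}_i$), it is a cyclic torsion module over the discrete valuation ring $e(\chi)\Z_\ell[G]$, hence $G$-isomorphic to $e(\chi)\Z/\ell^{c_i}\Z[G]$ for some $c_i$; comparing cardinalities with $(t_{i-1}/t_i)^d$ forces $\ell^{c_i} = t_{i-1}/t_i$, so $\C_i(\chi)/\C_{i-1}(\chi)$ has exponent exactly $t_{i-1}/t_i$. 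Equivalently, Lemma~\ref{chi-div} bounds the order of $\mathfrak{P}_i$ modulo $\C_{i-1}(\chi)$ by $t_{i-1}/t_i$, and every element of the quotient is a $\Z[G]$-combination of Galois conjugates of that class, hence also has order dividing $t_{i-1}/t_i$.

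Finally I would run the elementary filtration argument: by induction on $i$, if the exponent of $\C_{i-1}(\chi)$ divides $m_{i-1} := \prod_{j=1}^{i-1}(t_{j-1}/t_j)$, then any $x \in \C_i(\chi)$ has $x^{t_{i-1}/t_i} \in \C_{i-1}(\chi)$, so $x^{m_i} = 1$ with $m_i := m_{i-1}(t_{i-1}/t_i)$. Taking $i = n$ and telescoping, the exponent of $\p_\ell(\cO_F)(\chi) = \C_n(\chi)$ divides $\prod_{i=1}^n (t_{i-1}/t_i) = t_0/t_n = t_0 = t$, the exponent of $(U/E)(\chi)$, which is the claim. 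There is no genuine obstacle here: the substantive input is Theorem~\ref{chi-structure} (that a complete path exhausts $\p_\ell(\cO_F)(\chi)$ with controlled layers), and the only things to be careful about are identifying the layer exponents and the routine passage from per-layer exponent bounds to an exponent bound for the whole group.
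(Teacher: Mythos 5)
Your proof is correct and takes essentially the same route as the paper: each quotient $\C_i(\chi)/\C_{i-1}(\chi)$ in a complete $\chi$-path is $G$-cyclic of exponent $t_{i-1}/t_i$, and telescoping the filtration gives that the exponent of $\C_n(\chi) = \p_\ell(\cO_F)(\chi)$ divides $t_0$, the exponent of $(U/E)(\chi)$. The paper compresses this into a single sentence immediately preceding the theorem statement; you simply make explicit the DVR bookkeeping identifying each layer as $e(\chi)\Z/\ell^{c_i}\Z[G]$ and the inductive passage from per-layer exponents to the global exponent bound.
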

\noindent Theorem~\ref{chi-structure} leads to an iterative procedure to compute $\p_\ell(\cO_F)(\chi)$, which will be discussed in \S~\ref{ideal_class_structure}.

\section{Computation of the Ideal Class group}

\noindent We explore the algorithmic implications of Gras conjecture and Theorem~\ref{chi-structure}.
\subsection{Constructing $\ell$-adic Representations}
 We begin by constructing the $\mathbb{Z}_{\ell}$ character $\chi$ of $G$. It is sufficient for our algorithm to compute the associated idempotent $e(\chi)$. Consider the primary decomposition $G = \langle\sigma_1\rangle \oplus \langle\sigma_2\rangle \oplus \ldots \oplus \langle\sigma_s\rangle$ where for $1\leq i \leq s$,  $\sigma_i$ has order $q_i^{b_i}$ in $G$, where $q_i$ is a prime. For each $1 \leq i \leq s$, we describe all irreducible $\Z_\ell$ representations $\chi_i$ of the group $\langle\sigma_i\rangle$. For every choice of irreducible representations $\{\chi_i\}_i$, the Kronecker product $\bigotimes_i \chi_i$ defines an irreducible $\Z_\ell$ representation on $G$ and every irreducible $\Z_\ell$ representation of $G$ can be obtained in this manner.\\ \\
Consider the factorization $x^{q_i^{b_i}}-1= \prod_{j}{g_j(x)}$, where $g_j(x) \in \Q_\ell[x]$ are monic irreducible polynomials. For each such factor $g_j$, we construct an irreducible $\Z_\ell$ representation $\chi_i$ of $\langle\sigma_i\rangle$ as follows. Let $\zeta$ be a root of $g_j$ in an algebraic closure of $\Q_\ell$. Let $\mathcal{O}(\Q_\ell(\zeta))$ denote the ring of integers of $\Q_\ell(\zeta)$, viewed as a $\Z_\ell$ module. Define $\chi_i(\sigma_i)$ to be the $\Z_\ell$ linear automorphism of $\mathcal{O}(\Q_\ell(\zeta))$ that acts on $\mathcal{O}(\Q_\ell(\zeta))$ as left multiplication by $\zeta$. The dimension of the representation equals the degree of the polynomial $g_j$. The fixed space of $\chi_i(\sigma_i)$ is trivial and thus $\chi_i$ is irreducible. Since the factorization $\prod_{j}{g_j(x)}$ is square free and since distinct $g_j$ correspond to distinct representations, counting dimensions reveals that we have constructed all the irreducible representations of $\langle\sigma_i\rangle$.\\ \\
Factor $x^{q_i^{b_i}}-1$ over $\F_\ell$ and lift the factorization to $\Q_\ell[x]$ by Hensel Lifting. For each factor $g_j \in \Q_\ell[x]$, if $\zeta$ is a root of $g_j$ then $\{1,\zeta,\zeta^2,\ldots,\zeta^{deg(g_j)-1}\}$ forms a $\Z_\ell$ basis for $\mathcal{O}(\Q_\ell(\zeta))$. Write down $\chi_i(\sigma_i)$ as the $deg(g_j)$ dimensional square matrix over $\Z_\ell$ that takes the basis $\{1,\zeta,\zeta^2,\ldots,\zeta^{deg(g_j)-1}\}$ to $\{\zeta,\zeta^2,\zeta^3,\ldots,\zeta^{deg(g_j)}=g_j(\zeta)-\zeta^{deg(g_j)}\}$. Thus the irreducible $\Z_\ell$ representation of $G$ and the corresponding idempotents can be determined from the primary decomposition of $G$ as described above and the computation takes time polynomial in $[F:k]$.
\subsection{Computation of the $\ell$-part of the Regulator: Proof of Theorem \ref{regulator_theorem}}\label{regulator}
For every non principal irreducible $\Z_\ell$ representation $\chi$ of $Gal(F/k)$, the regulator part $e(\chi) R_F$ can be computed as follows.  Since $e(\chi)\Z_{\ell} [G]$ is isomorphic to the ring of integers of the unramified abelian extension of $\Q_{\ell}$ of degree $\dim(\chi)$, every simple torsion $e(\chi)\Z_{\ell} [G]$-module is isomorphic to $\Z/\ell^c\Z [G] e(\chi)$ for some $c$. Since $\chi \neq 1$,  the idempotent $e(\chi)$ is orthogonal to the principal idempotent and $e(\chi)  \infty_F$ has degree $0$,  where $\infty_F$ is a place in $F$ above $\infty$.\\ \\
Since $e(\chi) R_F$ is the cyclic $\Z_\ell[G]$ module generated by $e(\chi) \infty_F$, the order of $e(\chi) R_F$ is $\ell^b$ if and only if $b$ is the smallest positive integer for which $\ell^b e(\chi) \infty_F$ is principal. Using an algorithm of Hess \cite{hess}, we can test if  $\ell^b e(\chi)(\infty_F)$ is principal in time polynomial in $[F:k]$, $\log q$ and $\log(\ell^b)$. By finding the smallest $b$ for which it is principal, determine the structure of $e(\chi)R_F$ and theorem \ref{regulator_theorem} follows.\\ \\
Let $b_\chi$ be the smallest positive integer for which $\ell^{b_\chi} e(\chi) \infty_F$ is principal. The cardinality of $e(\chi) R_F$ is $\ell^{b_\chi \dim(\chi)}$ where $\dim(\chi)$ is the dimension of the character $\chi$. From the exact sequence $$ 0 \longrightarrow R_F \longrightarrow  \mathcal{C}l^0_F \longrightarrow \p(\mathcal{O}_F) \longrightarrow 0$$ it follows that $$\left|\p_\ell(\cO_F)\right| = \frac{h(F)_\ell}{\prod_{\chi} \ell^{b_\chi}}$$ where $h(F)_\ell$ is the cardinality of the $\ell$ primary part of $\cl^0_F$ and the product is over all non principal irreducible $\Z_\ell$ representations $\chi$ of $Gal(F/k)$.\\ \\
The divisor class number $h(F)$ can be computed in $\tilde{\mathcal{O}}(p^{12} d^{13} [F:k]^{30})$ time by \cite{lw}[Theorem 37] and theorem ~\ref{ell ideal class number} follows.
\subsection{Computation of the $\ell$-part of the Class Group: Proof of Theorem \ref{class_group_number}}\label{ideal_class_structure}
For this subsection we assume that the field $k$ is the rational function field $\F_q(t)$ and $F=H_\m$ and present the algorithmic details of the iterative procedure outlined in theorem \ref{chi-structure}. At the end of the section we briefly discuss the algorithmic issues involved removing the assumption that $k= \F_q(t)$ and $F=H_\m$.\\ \\
As in the previous section, we fix a non principal irreducible $\Z_\ell$ character $\chi$ of $G$ and set $N= \ell t^2$ where $t$ is the exponent of $(U/E)(\chi)$.\\ \\
To begin the iterative procedure, we need to construct an element in $E$ of $\chi$-index $t$ and an Euler system starting from it.\\ \\
When $k = \F_q(t)$ and $F=H_\m$, $S_F/\mu_F$ is generated by $\{\lambda_m^\sigma | \sigma \in G\}$ and the expression for the Euler system $\Psi_{\f,\g}$ in \S \ref{euler} greatly simplifies. The function $\xi : B_N \longrightarrow k_\infty^\times$ that maps $$\ma \longmapsto \N_{K_{\m\ma}/H_{\ma}}\left(\lambda_\m - \sum_{\pe | \ma}\lambda_\pe \right)  \in H_{\ma}^\times$$ is an Euler system that starts from $\xi(\e) = -\lambda_m^{q-1}$ \cite{fen}[\S 2]. The summation in the above expression is over prime $\pe$ dividing $\ma$.\\ \\
Since we have a finite generating set for $S_F/\mu_F$ and can test for identity in $(E/U^N)(\chi)$, we can compute a basis for $(E/U^N)(\chi)$ and one of the basis elements has to have $\chi$-index $t$. Express this basis element as a product of the form $\frac{\prod_{\sigma \in G_1}\lambda_m^\sigma}{\prod_{\tau \in G_2}\lambda_m^\tau}(e(\chi))$ where $G_1$ and $G_2$ are subsets of $G$ of the same cardinality.\\ \\
Then the function $\Phi$ that maps $\ma \in B_N$ to \begin{equation}\label{euler_system_equation}\frac{\prod_{\sigma \in G_1}\xi(\ma)^\sigma}{\prod_{\tau \in G_2}\xi(\ma)^\tau} \end{equation} is an Euler system starting from an element of $\chi$-index $t$.\\ \\
The iterative algorithm constructs a $\chi$-path starting from $\ka(\e)$,
$$\ka(\e) \stackrel{\chi}{\to} \ka(\pe_1) \stackrel{\chi}{\to} \ka(\pe_1\pe_2) \stackrel{\chi}{\to} \ldots \stackrel{\chi}{\to} \ka(\pe_1 \pe_2 \ldots \pe_n)$$
such that $\chi$-index of $\ka(\pe_1 \pe_2 \ldots \pe_n)$ is $1$.\\ \\
The critical computation at each iteration is to find a $\pe_i$ such that $$\ka(\pe_1 \pe_2 \ldots \pe_{i-1})\stackrel{\chi}{\to} \ka(\pe_1 \pe_2 \ldots \pe_i).$$
The existence of such $\pe_i$ of degree bounded by a polynomial in $N$ and $\log_q([F:k])$ is guaranteed by lemma \ref{density}. Let $m$ be the multiple of $ord_N(q)$ chosen in Lemma \ref{density}. At each step, we randomly generate a prime $\pe_i$ of degree $m$ and check if $\ka(\pe_1 \pe_2 \ldots \pe_n)\stackrel{\chi}{\to} \ka(\pe_1 \pe_2 \ldots \pe_i)$. From proof of lemma \ref{density}, at each step we succeed in finding a $\pe_i$ satisfying  $\ka(\pe_1 \pe_2 \ldots \pe_n)\stackrel{\chi}{\to} \ka(\pe_1 \pe_2 \ldots \pe_{i-1})$ in expected number of trials bounded by a polynomial in $N$.\\ \\
Given a choice of $\pe_i$, we test if $$\ka(\pe_1 \pe_2 \ldots \pe_{i-1})\stackrel{\chi}{\to} \ka(\pe_1 \pe_2 \ldots \pe_i)$$ by first computing $\ka(\pe_1 \pe_2 \ldots \pe_i)$ and then computing its $\chi$ index as described below.\\ \\
We assume that we have constructed $\lambda_{\pe_1},\lambda_{\pe_1},\ldots,\lambda_{\pe_{i-1}}$ in the previous iteration.\\ \\
Compute $\lambda_{\pe_i}$ using lemma \ref{lambda_lemma} and then compute $\Phi(\pe_1\pe_2\ldots\pe_{i})$ using equation \ref{euler_system_equation}.\\ \\
The cyclic extension $F(\pe_1\pe_2\ldots\pe_{i})/F$ can be constructed as the compositum $$F(\pe_1\pe_2\ldots\pe_{i}) = F . H(\pe_1\pe_2\ldots\pe_{i})$$
where $H(\pe_1\pe_2\ldots\pe_{i})$ is the fixed field of $Gal(H_{\pe_1\pe_2\ldots\pe_{i}}/k)^N$.\\ \\
Once $F(\pe_1\pe_2\ldots\pe_{i})$ and $\Phi(\pe_1\pe_2\ldots\pe_{i})$ are constructed, we can compute $\ka(\pe_1\pe_2\ldots\pe_{i})$ using equation \ref{kolyvagin_equation}. The running time for computing $\ka(\pe_1\pe_2\ldots\pe_{i})$ is dominated by the construction of the extension $F(\pe_1\pe_2\ldots\pe_{i})$ and the evaluation of equation \ref{kolyvagin_equation} which take time polynomial in $q^{\deg(\pe_i)}$ and $[F:k]$.\\ \\
All that remains is to compute the $\chi$ index of $\ka(\pe_1\pe_2\ldots\pe_{i})$.\\ \\
To compute the $\chi$-index of an $\alpha \in F^\times/F^{\times N}$, It suffices to be able to decide if $e(\chi)\alpha \in \Z_\ell \otimes_\Z (F^\times/F^{\times N}) $ is an $\ell^{th}$ power. Since $N$ is a power of $\ell$, $\Z_\ell \otimes_\Z (F^\times/F^{\times N}) \cong \Z/N\Z \otimes_\Z (F^\times/F^{\times N})$ and $e(\chi)\alpha$  can be expressed in the form $1 \otimes_\Z f$ and viewed as the function $f$ in $F^\times/F^{\times N}$. Further,  $e(\chi)\alpha$ being an $\ell^{th}$ power in $\Z/N\Z \otimes_\Z (F^\times/F^{\times N})$ is equivalent to $f$ being an $\ell^{th}$ power in $F^\times/F^{\times N}$. Since $|\p_\ell(\cO_F)(\chi)|$ divides $N$, $f$ being an $\ell^{th}$ power in $F^\times/F^{\times N}$ is equivalent to its lift $\hat{f}$ being an $\ell^{th}$ power in $F^{\times}$. The element $\hat{f}$ is an $\ell^{th}$ power in $F^\times$ if and only if the Riemann-Roch space $\mathcal{L}([\hat{f}]/\ell)$ is non empty. We can decide if $\mathcal{L}([\hat{f}]\ell)$ is empty in time polynomial in $[F:k]$ and polylogarithmic in the pole degree of the divisor of $[\hat{f}]/\ell$ \cite{hesrr}.\\ \\
%the Riemann-Roch space $\mathcal{L}(\frac{[\hat{f}]}{\ell})$ is non empty. We can decide if $\mathcal{L}(\frac{[\hat{f}]}{\ell})$ is empty in time polynomial in $[F:k]$ and polylogarithmic in the pole degree of the divisor $\frac{[\hat{f}]}{\ell}$ \cite{hesrr}.\\ \\
Thus the running time at each iteration of the algorithm is bounded by a polynomial in $q^{ord_N(q)}$ and $[F:k]$.\\ \\
By computing $\p_\ell(\cO_F)(\chi)$ for every $\chi$, we can determine $\p_\ell(\cO_F)$ in time polynomial in $q^{ord_N(q)}$ and $[F:k]$. Since $ord_N(q)$ is at most $N-1$, Theorem \ref{class_group_number} follows.\\ \\
We briefly discuss an issue that arise while attempting to turn Theorem \ref{chi-structure} into an effective algorithm that works not just for $k=\F_q(t)$ and $F=H_\m$ but for every $k,H$ that Theorem \ref{chi-structure} applies to.\\ \\
The generating set $\{\mathcal{N}_{H_\f/H_\f \cap F}(\lambda_\m^{\mathcal{N}(\g)-(\g,K_\m/k)})\}_{\f,\g}$ for the Stark units is not finite since $\f,\g$ range over coprime non zero ideals in $\cO_F$. Hence, it not obvious as to how to find an element in $E$ of $\chi$ index $t$ and construct an Euler system starting from it. In the generating set, the choice of $\f$ can be narrowed to a finite set. It is sufficient to consider $\f$ either dividing $\m$ to account for the ramified part of $F/k$ and $\f$ of bounded degree to account for the unramified part. Since $S_F$ is finitely generated, it should be sufficient to consider $\g$ of bounded degree, but this degree bound needs further investigation.\\ \\
If $k$ is an arbitrary finite geometric extension of $\F_q(t)$ and $F=H_\m$, then an element in $E$ of $\chi$ index $t$ and an Euler system starting from it can be efficiently found using \cite{xu}[Theorem 2.3].
\section{Ackowledgements}
\noindent We would like to thank the two anonymous reviewers for their valuable suggestions.


\begin{thebibliography}{99}
\bibitem{achter} J. D. Achter, The distribution of class groups of function fields, J. Pure and Appl. Algebra 204 (2006), no. 2, 316Ð333.
\bibitem{ahn} J. Ahn, S Bae, H Jung,  Cyclotomic units and Stickelberger ideals of global function fields, Trans. Amer. Math. Soc.  355  (2003), 1803-1818.
\bibitem{ber} E. R. Berlekamp, Factoring polynomials over large finite fields, Math. Comp. 24 (1970), 713-735.
\bibitem{fen} K. Feng, F. Xu Kolyvagin's ``Euler Systems'' in Cyclotomic Function Fields, Journal of Number Theory 57, 114-121.
\bibitem{fw} E. Friedman, L. C. Washington, On the distribution of divisor class groups of curves over a finite Þeld, Theorie des nombres (Quebec, PQ, 1987), de Gruyter, Berlin,1989, pp. 227Ð239.
\bibitem{gra} G. Gras, Classes d'ideaux des corps abeliens et nombres de Bernoulli generalises, (French) Ann. Inst. Fourier (Grenoble) 27 (1977), no. 1, ix, 1Ð66.
\bibitem{hay} D. Hayes, Stickelberger elements in function fields, Compositio Math. 55, 209–235 (1985)
\bibitem{hay_class} D. Hayes, Explicit class field theory in global function fields, Studies in algebra and number theory, Vol. 6 (1979), pp. 173-217.
\bibitem{hay_exp} D. Hayes, Explicit class field theory for rational function fields, Trans. Amer. Math. Soc. 189 (1974), 77-91 
\bibitem{hay_elliptic} D. Hayes, Elliptic units in function fields, Progress in Mathematics 26, Birkhauser, (1982)321-340.
\bibitem{hess} F. Hess, Computing relations in divisor class groups of algebraic curves over finite fields, Preprint.
\bibitem{hesrr} F. Hess, Computing Riemann-Roch Spaces in Algebraic Function Fields and Related Topics, J. Symb. Comput. 33(4): 425-445 (2002)
\bibitem{isi} M. Ishibashi, Effective version of the Tschebotareff density theorem in function fields over finite fields, Bull. London Math. Soc. 24 (1992), 52-56.
%\bibitem{ked} K. Kedlaya, Counting Points on Hyperelliptic Curves using Monsky-Washnitzer Cohomology, J. Ramanujan Math. Soc. 16 (2001), 323-338
\bibitem{kol} V. Kolyvagin, Euler systems, The Grothendieck Festschrift, Vol. II, Progr. Math., 87, Boston, MA: BirkhŠuser Boston, pp. 435Ð483 (1990)
\bibitem{lw} A. Lauder, D. Wan, Counting rational points on varieties over finite fields of small characteristic. MSRI Computational Number Theory Proceedings.(2008)
\bibitem{len} H.W Lenstra, Finding isomorphism between finite fields, Math. Comp., 56 (1991), pp. 329Ð347.
\bibitem{nfs} A.K. Lenstra , H.W. Lenstra, Jr. , M. S. Manasse , J. M. Pollard.  The Number Field Sieve, STOC 90, Pages 564-572. 
\bibitem{mw} B. Mazur, A. Wiles, Class fields of abelian extensions of Q, Inventiones Mathematicae 76 (2): 179Ð330
\bibitem{mur} V. K. Murthy, J. Scherk, Effective versions of the Chebotarev density theorem for function fields, C. R. Acad. Sci. (Paris), 319 (1994), 523-528.
\bibitem{ouk} H. Oukhaba, S. Viguie, The Gras conjecture in function fields by Euler systems, Bull. London Math. Soc. (2011) 43 (3): 523-535.
\bibitem{pos} M. Pohst, Factoring polynomials over global fields I, Journal of Symbolic Computation, 39 (2005), p 1325-1339. 
\bibitem{ros} M. Rosen, Number theory in function fields. Graduate Texts in Mathematics 210.
\bibitem{rub} K. Rubin, The Ômain conjecturesÕ of Iwasawa theory for imaginary quadratic fields, Inventiones Mathematicae 103 (1): 25Ð68(1990)
\bibitem{rub_book} K. Rubin, Euler Systems. Princeton, NJ: Princeton University Press, 2000.
\bibitem{sal} G.D.V Salvador, Topics in the Theory of Algebraic Function Fields (Mathematics: Theory and Applications)
\bibitem{sin}W. Sinnott, On the Stickelberger Ideal and the Circular Units of a Cyclotomic Field, The Annals of Mathematics,  Vol. 108, No. 1 (Jul., 1978), pp. 107-134.
\bibitem{del} J. Tate, Le Conjectures de Stark sur les Fonctions L d'Artin en s=0", Progress in Mathematics (Birkhauser) 47, 1984
%\bibitem{wan} D. Wan, Computing zeta functions over finite fields, Contemporary Mathematics, 225(1999), 131-141.
\bibitem{xu} F. Xu, J. Zhao, Euler systems in global function fields, Israel Journal of Mathematics 124(2001), 367-379.
\bibitem{yin} L. Yin, Stickelberger ideals and divisor class numbers. Math. Z. 239 (2002), no. 3, 425-440.
\end{thebibliography}
\end{document}